 \theoremstyle{plain}
\newtheorem{proposition}{Proposition}
\newtheorem{lemma}{Lemma}
\newtheorem{remark}{Remark}
\newtheorem{definition}{Definition}
\newtheorem{theorem}{Theorem}
\newcommand{\m}{\displaystyle}
\newcommand{\R}{\mathbb{R}}
\newcommand{\M}{\mathbb{M}}
\def\la#1#2{\lambda_{#1#2}}
\def\gf#1#2{g_{#1#2}}
\def\uu{(u_1,u_2,u_3)}
\def\pa#1{\dfrac{\partial \alpha}{\partial u_{#1}}}
\def\pad#1#2{\dfrac{\partial ^2
\alpha}{\partial u_{#1}\partial u_{#2}}}
\def\ui#1{u_{#1}}
\def\func#1#2#3#4{{#1}^{#2}\rightarrow{#3}^{#4}}
\def\we{\wedge}
\def\fu#1#2{{#1}_{#2}}
\def\lp#1{${\mathcal L}_{#1}(\alpha)$}
\def\fp#1{${\mathcal F}_{#1}(\alpha)$}
\def\fuu#1#2#3{#1_{#2#3}}
\def\prf#1#2{\dfrac{\partial #1}{\partial #2}}
\def\re{{\mathbb R}}
 \title[Partially umbilic singularities   of
  hypersurfaces of  $\mathbb R^4$  ]{
Partially umbilic singularities   of
  hypersurfaces of  $\mathbb R^4$}
 \author{D. Lopes, J. Sotomayor and R. Garcia }
\thanks{The first author was partially supported by a doctoral fellowship  CAPES/CNPq.
The second author participated in the    FAPESP Thematic Project
2008/02841-4 and has a  fellowship  CAPES PVNS at UNIFEI.  
The authors are  partially supported by Pronex/FAPEG/CNPq  Proc.   2012 10 26 7000 803. 
 The  second and third authors are fellows of CNPq.
}
\begin{document}

\begin{abstract}
This paper establishes the
geometric
structure of the lines of principal curvature of
a
hypersurface
 immersed in   ${\mathbb R}^4$
in a neighborhood of the set   $\mathcal{S}$
of its  {\em principal curvature singularities}, 
  consisting  of   the points at which atF
least two principal curvatures are equal.
 Under generic
conditions defined by appropriate transversality  hypotheses it is
proved that $\mathcal{S}$ is the union of  regular smooth curves
$\mathcal{S}_{12}$ and $\mathcal{S}_{23}$,
 consisting of  {\em partially umbilic points},
 where  only
two principal curvatures coincide.
  This curve is   partitioned into regular arcs consisting of
  points of Darbouxian  types
 $D_1,\; D_2,\; D_3$,  with common boundary
 at isolated   semi-Darbouxian 
 transition 
 points of   types
  $ D_{12}$ and $D_{23}$.
   The
   stratified
structure of the {\em partially umbilic separatrix surfaces}, 
consisting of  the boundary of the set of  points through which
the principal lines approach  $\mathcal S$, established in this
work,  extends to hypersurfaces in ${\mathbb R}^4$
the results  of Darboux in  \cite{da} for  umbilic points  on
analytic surfaces in ${\mathbb R}^3$,  reformulated  by  Gutierrez
and Sotomayor in \cite{gs1}, to describe the umbilic separatrix structures of the 
umbilic types $D_1,\; D_2,\; D_3$,  and further developed by Garcia, Gutierrez and Sotomayor in \cite{gs4},
   for   their $ D_{12}$ and  $D_{23}$  generic bifurcations.
This work complements results of Garcia \cite{ga3} on the structure  of principal curvature lines around the generic partially umbilic points of hypersurfaces  in ${\mathbb R}^4$. 
 
 \end{abstract}

      \maketitle
 \section{ INTRODUCTION}\label{sec:int}

Let $ M^3$  be a
  $C^\infty$,  oriented, compact,
$3-$~dimensional manifold.

An immersion $\alpha$ of $ M^3$ into ${\mathbb R}^{4}$ is a map such that
$D{\alpha}_p : TM^3_p  \to {\mathbb R}^{4} $ is one to one, for every $p\in
M^3$.
 Denote by $Imm^k(M^3,{\mathbb R}^{4})$ the set
of $C^{k}$ - immersions of $ M^3 $ into ${\mathbb R}^{4}$  endowed with
the $C^k-$
 topology, see  \cite{levine}.

Associated to every $\alpha \in
Imm^k(M^3,{\mathbb R}^{4})$ is defined the normal map ${\mathcal N}_{\alpha}: M^3 \to
S^3$ :
 \vspace{-0.2cm}
$$ {\mathcal N}_{\alpha} =   \left( {\alpha}_1\we  {\alpha}_2\we {\alpha}_3  \right)/
 \mid
{\alpha}_1\we {\alpha}_2 \we{\alpha}_3\mid,$$
\noindent  where $({u_1},u_2,{u_3}): (M, p ) \to ({\mathbb R}^3, 0 ) $
 is a positive chart of $ M^3$  around $p$, $\we$ denotes the
product of vectors
  determined by a once for all fixed orientation  in ${\mathbb R}^{4}$.  This space
 is endowed  with the Euclidean norm $ | \cdot | =  \langle   \cdot ,   \cdot \rangle ^{\frac 12}$.
Also
 ${\alpha}_1 = \prf{\alpha}{u_1}$, ${\alpha}_2 = \prf{\alpha}{u_2}$, $ {\alpha}_3 =
\prf{\alpha}{u_3} $.

Clearly, ${\mathcal N}_{\alpha}$ is well defined  and of  class $C^{k-1}$ in
$M^3$.

Since $D{ \mathcal N}_{\alpha}(p)$ has its image contained in that of $D\alpha
(p),$ the endomorphism
${\omega}_{\alpha} : TM^3 \to TM^3 $ is well 
defined by
%S  SUGIRO  TROCAR  {\omega}_{\alpha} POR  {\mathcal W}_ {\alpha}   E NAS LINHAS  211, 213, 217, 260, 295 
%S \omega   APARECE DEPOIS  NA FORMA QUE DEFINE O CAMPO DE PLANOS PUMBILICOS
\vspace{-0.2cm}
$$ D\alpha.{\omega}_{\alpha} = D{\mathcal N}_{\alpha}.\vspace{-0.2cm}$$

It is well known that ${\omega}_{\alpha}$ is  a self  adjoint endomorphism,
when $ TM^3 $ is endowed with the metric ${\langle \cdot, \cdot \rangle}_{\alpha} $ induced by
$\alpha$ from the metric in ${\mathbb R}^{4}$. See \cite{sp2}.

The opposite values of the eigenvalues of ${\omega}_{\alpha}$ are
called  {\it principal curvatures} of  $\alpha$ and will be denoted by $k_1 = k_1 (\alpha)
\le k_2  = k_2  (\alpha)  \le k_3 =  k_3  (\alpha)$.

 The  {\em principal singularities}
  of  the immersion $\alpha$
 are defined as follows:

\begin{itemize}
\item[{$\bullet$}] \;{\em Umbilic Points}:  ${\mathcal U}_\alpha=\{ p\in M^3: k_1(p) = k_2(p) = k_3(p)\}$,  

\item[{$\bullet$}] \; {\em Partially Umbilic Points}:  ${\mathcal S}_\alpha={\mathcal S}_{12}(\alpha)\cup {\mathcal S}_{23}(\alpha)$,
where  
\item[{$\bullet$}] \; ${\mathcal S}_{12}(\alpha)=\{ p\in M^3: k_1(p) = k_2(p)
<
k_3(p)\}$,
\item[{$\bullet$}] \; $  {\mathcal S}_{23}(\alpha)=\{ p\in M^3: k_1(p)
< k_2(p) = k_3(p)\}.$
\end{itemize}

 The eigenspaces associated to the principal
curvatures,
when simple,
 define three
 line fields \lp i, $(i=1, 2, 3)$,
 mutually orthogonal in $TM^3$  (endowed with the metric $ {\langle \cdot, \cdot
\rangle}_{\alpha}$), called {\it principal line fields} of $\alpha$.
 They are
characterized by Rodrigues' equations  \cite{sp1, sp2} and  \cite{st}.
$$ 
\mathcal L_i(\alpha)  =
\{ v\in TM^3 : {\omega}_{\alpha}v +
k_i v = 0, \;  i= 1, \;  2,\;3 \}.$$
These line fields are well defined and smooth  outside
 their
respective
  sets
   of {\em principal singularities}, as follows:
%S  SUGIRO  TROCAR  {\omega}_{\alpha} POR  {\mathcal W}_ {\alpha}  
%S \omega   APARECE DEPOIS  NA FORMA QUE DEFINE O CAMPO DE PLANOS PUMBILICOS
 $\mathcal L_1(\alpha) $ is  of class
$\;\;C^{k-2}$ outside ${\mathcal U}_\alpha  \cup {\mathcal
S}_{12}(\alpha)$,

$\mathcal L_3(\alpha) $  is of class $\;\;C^{k-2}$ outside
${\mathcal U}_\alpha  \cup {\mathcal S}_{23}(\alpha)$,

$\mathcal L_2(\alpha) $ is of class $\;\;C^{k-2}$ outside
${\mathcal U}_\alpha  \cup {\mathcal S}_{\alpha}$.

This follows from the smooth dependence of simple eigenvalues and corresponding one-dimensional eigenspaces.

The integral curves of $\mathcal{L}_i,\; (i=1,2,3)$ are called the {\em principal foliations } $\mathcal{F}_i(\alpha)$ of $\alpha$.

Generically, for an open and dense set,  in the space
$Imm^{k}(M^3,{\mathbb R}^{4})$, ${\mathcal U}_\alpha = \emptyset$ and
${\mathcal S}_\alpha$, when non empty,  is  a regular submanifold of
codimension two, consisting of two pieces ${\mathcal S}_{12}
(\alpha)$ and ${\mathcal S}_{23} (\alpha)$.
This follows from the
Transversality Theorem \cite{thom} together with the following
well known facts: In the $6-$ dimensional space of $3\times 3$
symmetric matrices, those with three  equal eigenvalues, denoted
$\Sigma ^3$,  has codimension $5$ and those with only two equal
eigenvalues, denoted $\Sigma ^2$,  has codimension $2$. See 
Lax \cite{lax}. 
In  Section \ref{sec:summa} the symmetric matrices $\Sigma ^3$ and  
$\Sigma ^2$  will  appear
identified with the umbilic and partially umbilic $2-$jets,  $\mathcal U ^2$  and   $(\mathcal { PU} )^2$ and their extensions to the 
space of $4-$jets, where the submanifolds of interest for this work naturally belong.

 In this work  will be assumed  that the $2-jet$  extensions,
$j^2_{\alpha}$,  of the immersions $\alpha$
have
their $\omega _\alpha$ endomorphism transversal to both manifolds,
%S  SUGIRO  TROCAR  {\omega}_{\alpha} POR  {\mathcal W}_ {\alpha}  , MAIS ABAIXO NAO APARECE MAIS..
%S \omega   APARECE DEPOIS  NA FORMA QUE DEFINE O CAMPO DE PLANOS PUMBILICOS
$\Sigma ^3$ and $\Sigma ^2$ described  above.
Coordinate
expressions for the  five types of generic partially umbilic
points (of codimension $\leq 3$)  and pertinent transversality
conditions in terms of the $4-jets$ of $\alpha$
 will be given in Definitions
 \ref{defDi}, \ref{defD12} and  \ref{defD23}. See also section \ref{sec:summa}.

Thus in this paper,  when non empty, ${\mathcal S}_\alpha $  will
be a
finite
 collection of
  closed
 regular
curves of class $C^{k-2}$ called the {\it partially umbilic
curves} of $\alpha$, the umbilic set $\mathcal U_\alpha$  being empty.

At each point $p\in {\mathcal S}_{12} $   is defined the 
\emph{
partially umbilic plane} ${\mathcal P}_3 (p) $  which is
orthogonal to the unique well defined principal direction
$\mathcal L_3 (p)$,  eigenspace  associated to the simple
eigenvalue $-k_3 (p)$.  Analogous  definition
holds
for $p\in
{\mathcal S}_{23} $, $-k_1 (p)$  and ${\mathcal P}_1(p)$.

Fig.  \ref{pucurve} illustrates a   partially umbilic curve $
{\mathcal S}_{12} $
 and  some
 of its
 possible contact behaviors with
the \emph{distribution} or \emph{field} of  planes ${\mathcal P}_3$  in  $M^3 \setminus {\mathcal S}_{23}$.
Similar
illustration applies to 
 ${\mathcal S}_{23} $  and the \emph{distribution}  or \emph{field}   of planes ${\mathcal P}_1$.

 This is an  initial schematic  illustration.
  Specific configurations of principal curvature lines,
  which are the
  integral foliations ${\mathcal
F_i }({\alpha})$ of the line fields  ${\mathcal L}_i ({\alpha})$,
 are established  in  this work for the generic case   around partially umbilic points.

 \begin{figure}[h]
% \psfrag{ct}{trans.  crossing}
%  \psfrag{cq}{quadr. contact}
%   \psfrag{cc}{cubic contact}
%    \psfrag{cpu}{${\mathcal S}_{12}$ partially umbilic curve}
%     \psfrag{pu}{${\mathcal P}_3$ partially umbilic plane}
 \begin{center}
     \def\svgwidth{0.75\textwidth}
        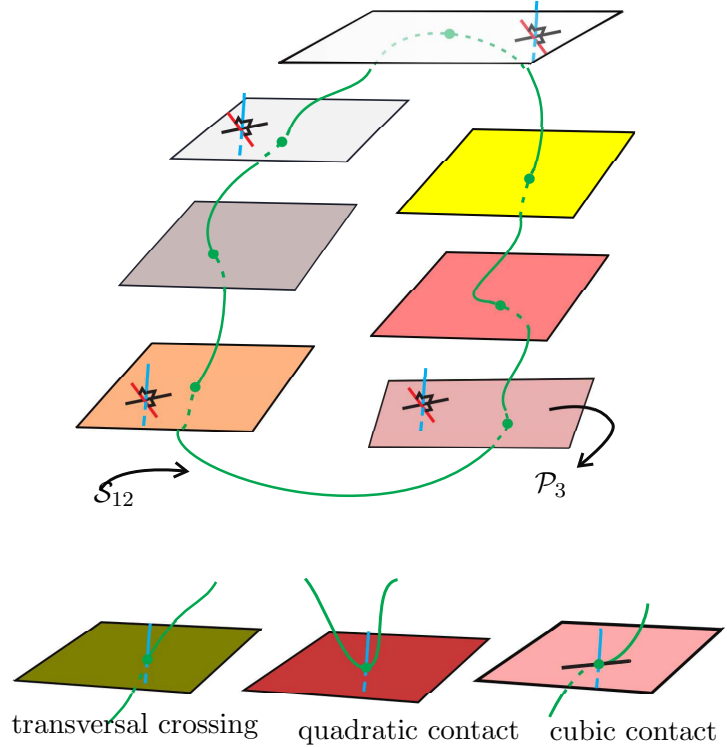
        
        \vspace{0.2cm}
  \caption {Partially umbilic curve, say ${\mathcal S}_{12}$, in green,
   and its partially umbilic planes ${\mathcal P}_3$, in multiple colors.
   Illustration of  its relative position with respect to the distribution of ${\mathcal
   P}_3$ planes:
   transversal crossings (almost everywhere),  quadratic
   contacts (top and bottom)
   and one  more degenerate, non-generic cubic contact,  middle right.}
    \label{pucurve}
     \end{center}
 \end{figure}

To make concrete this schematic picture 
%S i
the following 
must be added:

\noindent 1. - Along the transversal arcs: sub-arcs of Darbouxian points $D_i,$ \, $ i =1,\,2$,  are separated by  $D_{1,2}$  transition points, as established in Theorem \ref{th:d12}.\\

\noindent 2.- At the quadratic tangential points the sub-arcs of  Darbouxian $D_i ,$\,  $ i =2,\,3$ points,  are separated by   $D_{23}$  transition points.  See  Theorem \ref{th:d23}.\\

In this  work  will be established the \emph{ stratified  structure}    of the integral foliations ${\mathcal
F_i }({\alpha})$
near  the generic singularities ${\mathcal S}_\alpha$. See Mather  \cite{ma}.
To this end   will be provided  a  new approach and
proofs,  based  on the  Lie - Cartan suspension, explained in section
\ref{sec:2}, which allows to improve the results given in Garcia
\cite{ga1} and \cite{ga3}.

 See Theorem \ref{th:d123}, in section \ref{sec:dar_PU}, Theorem
\ref{th:d12}, in section \ref{sec:d12_PU} and Theorem  \ref{th:d23}, in section \ref{sec:d23_PU},  for a synthesis of the main  results of this work.
The proofs will be given after some preliminaries
 on the differential equations of principal foliations and
 the basic properties of  their Lie-Cartan suspensions  presented in section \ref{sec:2}.

Sections \ref{sec:4}, \ref{D12} and
  \ref{sec:d23}, respectively,  contain  the proofs of Theorems \ref{th:d123}, \ref{th:d12}  and \ref{th:d23}.   See also  the  summarizing  Theorem  \ref{th:summat} in section \ref{sec:summa}. 
  
  Section \ref{sec:CR}  is devoted to a discussion of  the 
     relation  of the results of this paper with  previous  and forthcoming ones.  
     The Appendix \ref{sec:App}  contains
     the    coordinate expressions of the geometric functions essential for the calculations in the proofs of the main results of this paper. 

 This work can
 also 
  be regarded as an extension  to $\mathbb R^4$ the results about  umbilic
points and their generic bifurcations  for surfaces immersed in $\mathbb R^3$.
 See  \cite{gas}, \cite{gs1},  \cite{gs4}, \cite{gs6}.

 The structure of the 
 global principal configurations on the
  ellipsoids in $\mathbb R^4$  has been established   Garcia, Lopes and Sotomayor  \cite{desoga}.

\subsection {Definition of the Color Convention in Illustrations.} \label{ss:CC}
 
The color convention for partially umbilic  curves and the  integral leaves  of principal line fields,  packed into strata, 
  established in this paper  is defined
as follows.

\begin{itemize}
\item [] Black: integral curves of line field  $\mathcal L_1$,

 \item [] Red: integral curves of line field $\mathcal L_2$,

 \item []Blue:  integral curves of line field  $\mathcal L_3$,

 \item[] Green: Partially umbilic arcs $\mathcal S_{12}$,

 \item[] Light Blue: Partially umbilic arcs $\mathcal S_{23}$.

 \end{itemize}

\section{Adapted Monge charts  at   Partially Umbilic Points} \label{sec:Monge_jet}

  Let $ p\in M^3 $  be  a partially 
umbilic point of an immersion $\alpha$ such that $ k_1(p)= k_2(p)=k(p) <  k_3(p) $. That is $p \in \mathcal S_{12}(\alpha)$.

\noindent  Let $\uu : M^3 \rightarrow {\mathbb R}^3 $ be a local chart
and  $ R : {\mathbb R}^4 \to {\mathbb R}^4 $ be an  isometry such that :
 $$
  (R\circ \alpha)\uu = (u_1, u_2, u_3, h \uu)
 $$
 where:
\begin{equation}\label{eq:di}
\begin{array}{ll}
h &=\displaystyle\frac{k}{2}(u_1^2+u_2^2)
+\displaystyle\frac{k_3}{2}u_3^2+\displaystyle\frac{a}{6}u_1^3+
\displaystyle\frac{b}{2}u_1u_2^2+\displaystyle\frac{c}{6}u_2^3+\displaystyle\frac{q_{003}}{6}u_3^3+ \displaystyle\frac{q_{012}}{2}u_2u_3^2\\
&+q_{111}u_1u_2u_3
+\displaystyle\frac{q_{021}}{2}u_2^2u_3+\displaystyle\frac{q_{102}}{2}u_1u_3^2
+\displaystyle\frac{q_{201}}{2}u_1^2u_3+\displaystyle\frac{A}{24}u_1^4
+\displaystyle\frac{B}{6}u_1^3u_2\\
&+\displaystyle\frac{C}{4}u_1^2u_2^2+\displaystyle\frac{D}{6}u_1u_2^3
+\displaystyle\frac{E}{24}u_2^4+\displaystyle\frac{Q_{004}}{24}u_3^4+
\displaystyle\frac{Q_{013}}{6}u_2u_3^3+\displaystyle\frac{Q_{103}}{6}u_1u_3^3\\
&+
\displaystyle\frac{Q_{022}}{4}u_2^2u_3^2
 +\displaystyle\frac{Q_{202}}{4}u_1^2u_3^2+\displaystyle\frac{Q_{112}}{2}u_1u_2u_3^2+
\displaystyle\frac{Q_{031}}{6}u_2^3u_3+\displaystyle\frac{Q_{301}}{6}u_1^3u_3\\
&+\displaystyle\frac{Q_{121}}{2}u_1u_2^2u_3+\displaystyle\frac{Q_{211}}{2}u_1^2u_2u_3 + h.o.t.
\end{array}
\end{equation}

 \begin{remark} \label{rem:adapted}
The rotation $R$ was chosen to eliminate the coefficient of the term $u_1^2 u_2$.
In this sense the chart
 is
 said to be  {\em adapted by rotation}.

Composing  the immersion with a   homothety ,  one of the coefficients $k$ or $k_3$  in equation  (\ref{eq:di})
can be taken to be $1$ or $-1$. After a reflection,  $1$ can always be chosen to be such coefficient.
In this sense the chart is  said to be  {\em adapted by homothety  and reflection}.

By means of an additional inversion  the other coefficient  (i.e.  $k_3$ or $k$) may be assumed to be $0$.

 Some of the long expressions appearing in this work may be simplified  by using {\em adapted charts}.
\end{remark}

\section{Darbouxian Partially Umbilic Points} \label{sec:dar_PU}

\begin{definition}\label{defDi}
The point $p$
in ${\mathcal S}_{12}(\alpha)$
 is called  a  Darbouxian  partially umbilic point of
type   $D_i$ if, in the notation of equation (\ref{eq:di}),
 the
 geometric  transversality
 condition $T$
 and
  the discriminant condition
  $D_i$ below hold.

\begin{itemize}
\item[T)] $ b (b - a ) \ne 0 $
\end{itemize}

\begin{itemize}
\item[D$_1$)] $\displaystyle\frac{a}{b}>\left(\frac{c}{2b}\right)^2+2$;
\item[D$_2$)] $1<\displaystyle\frac{a}{b}<\left(\frac{c}{2b}\right)^2+2$,\; $a\neq 2b$;
\item[D$_3$)] $\frac{a}{b}<1$.
\end{itemize}
\end{definition}

The main result of this
section is stated now.

 \begin{theorem}   \label{th:d123} Let $\alpha \in {Imm}^k(M^3, {\mathbb R}^4 ) \quad $
and $p\in {\mathcal S}_{12}(\alpha).$
 Then there is a  neighborhood $V_p$ of $p$ where ${\mathcal S}={\mathcal S}_{12}(\alpha) \cap V_p$  is a smooth curve
consisting of points $D_i$
where it holds  that:

 \begin{itemize}
 \item[i)] For the case  $D_1, $
 there exists  a unique
invariant separatrix  surface $W_1({\mathcal S})\subset V_p$, $\partial W_1 ({\mathcal S})={\mathcal S}$,
of class  $C^{k-3},\;$ fibred over
$\mathcal S  $ whose fibers are  leaves  of  \fp 1.
Only these leaves (the fibers) are asymptotic to the partially umbilic curve ${\mathcal S}$.

 The set $V_p \backslash W_1({\mathcal S})$ is a  hyperbolic sector of  \fp 1. See Fig.
 \ref{configD1D2D3Sf1}, left.

 \item[ii)]  For the case  $D_2$
there exist  two invariant separatrix surfaces as described in  item i) and exactly one wedge sector and
one hyperbolic sector of \fp 1.  See Fig. \ref{configD1D2D3Sf1}, center.

 \item[iii)]
 For the case  $D_3$,  there exist
three invariant  separatrix surfaces  as in item i)  and exactly three  hyperbolic sectors of \fp 1.
 See Fig. \ref{configD1D2D3Sf1}, right.

\item[iv)] The same conclusions  hold  for the foliation  \fp 2 which is
orthogonal to \fp 1 and singular in the set ${\mathcal S_{12}}$.
 See Fig. \ref{configD1D2D3Sf2}.

\end{itemize}
\end{theorem}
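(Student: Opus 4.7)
\textbf{Proof plan for Theorem \ref{th:d123}.}
The plan is to carry out the Lie--Cartan suspension analysis announced in Section \ref{sec:2}. Working in an adapted Monge chart at $p \in \mathcal{S}_{12}(\alpha)$ as in equation (\ref{eq:di}), I would first derive the implicit binary differential equation for the principal line fields $\mathcal{L}_1, \mathcal{L}_2$ inside the well-defined orthogonal plane field $\mathcal{P}_3$. Decomposing a tangent vector into its $\mathcal{P}_3$ and $\mathcal{L}_3$ components and imposing that it is a principal direction, one obtains after eliminating the $\mathcal{L}_3$-component a quadratic equation $A\, du_1^2 + B\, du_1\, du_2 + C\, du_2^2 = 0$ whose coefficients $A, B, C$ vanish identically along $\mathcal{S}_{12}$. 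Dividing out one common factor and homogenizing through the projective coordinate $p = du_2/du_1$, the equation lifts to a smooth vector field $X$ on a three-dimensional bundle $\widetilde V_p$ over $V_p$, in which the partially umbilic curve $\mathcal{S}_{12}$ lifts to a curve $\widetilde{\mathcal S}$ of equilibria of $X$.

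The second step is to linearize $X$ along $\widetilde{\mathcal S}$. Using the $3$-jet coefficients $a, b, c$ from (\ref{eq:di}), the equilibria over a given point of $\mathcal{S}_{12}$ are the real roots of a cubic polynomial $P(p)$ whose coefficients are explicit polynomials in $a, b, c$; direct computation identifies the discriminant of $P$ with the classical Darbouxian discriminant, showing that $P$ has one real root in case $D_1$, two in case $D_2$ (with $a \ne 2b$ ensuring their simplicity), and three in case $D_3$. The transversality condition $b(b - a) \ne 0$ is exactly what turns each such equilibrium into a partially hyperbolic saddle: the linearization of $X$ has one zero eigenvalue along $\widetilde{\mathcal S}$ and two real nonzero eigenvalues of opposite sign in the transverse directions.

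The third step is to apply the Invariant Manifold Theorem for normally hyperbolic sets to $\widetilde{\mathcal S}$ under $X$. Each transverse saddle on $\widetilde{\mathcal S}$ produces a two-dimensional invariant manifold of class $C^{k-3}$ with center direction along $\widetilde{\mathcal S}$ and one-dimensional hyperbolic fiber. Projecting these invariant manifolds to $V_p$ by the bundle map yields the separatrix surfaces $W_1(\mathcal S)$ of item i), each fibered over $\mathcal S$ by leaves of $\mathcal F_1$ asymptotic to $\mathcal S$; their number equals the number of real roots of $P$, giving one, two or three separatrices in cases $D_1, D_2, D_3$ respectively. Hyperbolicity off the separatrices ensures that all other leaves of $\mathcal F_1$ leave a neighbourhood of $\mathcal S$, and the sectorial decomposition of $V_p \setminus W_1(\mathcal S)$ follows from the signs of the transverse eigenvalues exactly as in the classical Darbouxian analysis on surfaces in \cite{gs1}. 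Item iv) is obtained by the same argument applied to the line field $\mathcal L_2$, which inside $\mathcal P_3$ is the orthogonal complement of $\mathcal L_1$ with respect to the induced metric, so the corresponding Lie--Cartan suspension is conjugate to the one for $\mathcal F_1$ under a fibrewise $\pi/2$-rotation.

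The main obstacle will be the linearization step: verifying that the characteristic polynomial of $X$ along $\widetilde{\mathcal S}$ reproduces exactly the Darbouxian discriminant of Definition \ref{defDi}, with the right correspondence between root patterns and the sector types claimed in items i)--iii), and separating the wedge sector in $D_2$ from the hyperbolic ones using the sign of $a - 2b$. A secondary subtlety is the loss of one derivative in the projection from $\widetilde V_p$ to $V_p$ and the verification that no additional leaves of $\mathcal F_1$ are asymptotic to $\mathcal S$ beyond those lying on $W_1(\mathcal S)$; the normally hyperbolic structure of $\widetilde{\mathcal S}$ is precisely what guarantees this uniform behaviour along the whole curve of partially umbilic points.
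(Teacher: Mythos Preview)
Your overall strategy---Lie--Cartan suspension, analysis of the equilibrium curves of the lifted vector field $X$, invariant manifold theory, then projection---is exactly the paper's approach. But your linearization step contains a concrete error that would derail the $D_2$ case.

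You claim the cubic $P(p)$ has \emph{two} real roots in case $D_2$, each a transverse saddle. In fact, writing the cubic at the origin one gets
\[
C(0,P)=bP^3-cP^2+(a-2b)P=P\bigl(bP^2-cP+(a-2b)\bigr),
\]
whose quadratic factor has discriminant proportional to $\bigl(\tfrac{c}{2b}\bigr)^2-\tfrac{a}{b}+2$. Under condition $D_2$ this discriminant is \emph{positive}, so there are \emph{three} real equilibrium curves $\gamma_1,\gamma_2,\gamma_3$, just as in $D_3$. What distinguishes $D_2$ from $D_3$ is not the root count but the \emph{type} of the equilibrium at $P=0$: the nonzero transverse eigenvalues there are $\lambda_3=a-2b$ and $\lambda_4=b-a$, and under $D_2$ (say $b>0$, $a<2b$) both are negative, so $\gamma_1$ is a normally hyperbolic \emph{attractor}, not a saddle. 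The other two curves $\gamma_2,\gamma_3$ are saddles, and their stable manifolds project to the two separatrix surfaces of item ii). The wedge sector is precisely the projection of the basin of attraction of $\gamma_1$; without the node you have no mechanism producing it. Under $D_3$ the inequality $a/b<1$ flips the sign of $\lambda_4$, making all three curves saddles and yielding three separatrix surfaces and three hyperbolic sectors.

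So the transversality condition $b(b-a)\ne 0$ does \emph{not} make every equilibrium a saddle; it only guarantees that the transverse eigenvalues are nonzero. You will need to redo the eigenvalue computation carefully and treat the node in $D_2$ separately to recover the sectorial decomposition claimed in the theorem.
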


\begin{figure}[h]
\psfrag{S}{${\mathcal S} = {\mathcal S _{12}}$}
\psfrag{W}{$W_1({\mathcal S})$}
\psfrag{W1}{$W_1({\mathcal S})$}
\psfrag{W2}{$W_2({\mathcal S})$}
\psfrag{W3}{$W_3({\mathcal S})$}
\begin{center}
 \def\svgwidth{0.85\textwidth}
    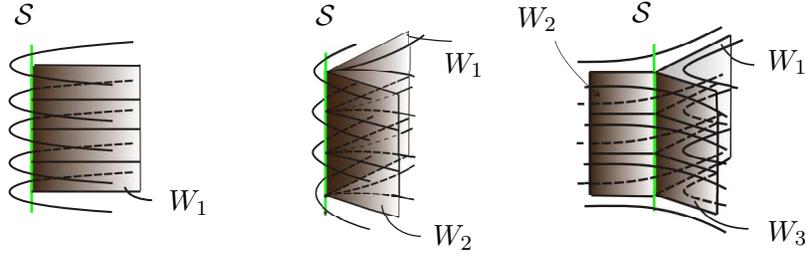
    \caption{Principal Foliations $\mathcal{F}_1(\alpha)$ and partially
    umbilic  separatrix surfaces in the
neighborhood of the point $D_1$ (left), $D_2$ (center) and $D_3$ (right).}
  \label{configD1D2D3Sf1}
    \end{center}
\end{figure}

\begin{figure}[h]
\psfrag{S}{${\mathcal S}$}
\psfrag{W}{$W_1$}
\psfrag{W1}{$W_1$}
\psfrag{W2}{$W_2$}
\psfrag{W3}{$W_3$}
\psfrag{W13}{  $W_2$}
\psfrag{W23}{ $W_3$}
\psfrag{W33}{$W_1$}
\begin{center}
 \def\svgwidth{0.85\textwidth}
  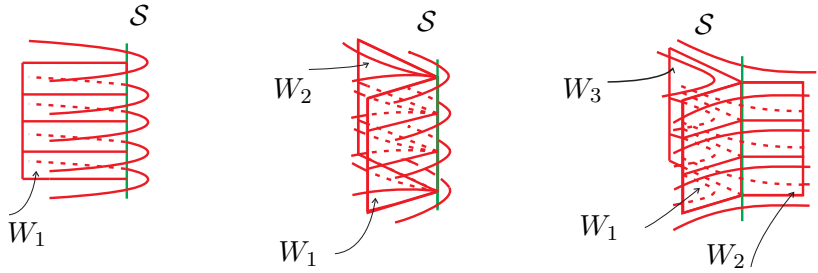
    \caption{Principal Foliations $\mathcal{F}_2(\alpha)$ and partially
    umbilic  separatrix surfaces  in the
neighborhood of the point $D_1$(left), $D_2$(center) and $D_3$(right).}
  \label{configD1D2D3Sf2}
    \end{center}
\end{figure}

\begin{definition}
A separatrix surface as that in theorem \ref{th:d123} is called partially umbilic  separatrix surface.
\end{definition}

\section{$D_{12}$ Partially Umbilic Points} \label{sec:d12_PU}

\begin{definition}\label{defD12} Let $ p $ be a partially umbilic point and
$\alpha$  expressed  as in equation \eqref{eq:di}.
The point $p $ is called    semi-Darbouxian of  type
 D$_{12} $ if the following conditions are satisfied:

\begin{itemize}
\item[ D$_{12})$] $ b c (b-a)\neq0$, $\m\frac{a}{b}=2$  and  $\chi_{12} \neq 0$,
where

\begin{equation}
\label{chi2} 
\aligned \chi_{12}= &\left(k-{   k_3} \right)  \left[   \left(b{  q_{021}} -b{  q_{201}}-c{
  q_{111}} \right) { B}-\,b {   q_{111}}\; C+  b^2 Q_{211}\,
 \right] \\
 +&    q_{012}\,   q_{201}\,{b}^{2}+ \left(2\, q_{102}\,{b}^{2}-b{k}^{3}{ k_3}+b{k}^{4} \right)  q_{111} -3\,c q^{2}_{111}  q_{201}\\
 -&3
 \,b q_{111}\, q^2_{201}-2b\,{  q^3_{111}}  +2 b\,  q_{111}\,
  q_{021}\,  q_{201} 
 \endaligned
\end{equation}

\end{itemize}
\end{definition}

\begin{remark}  When $ \frac{a}{b}=\left(\frac{c}{2b}\right)^2+2$, $b(b-a)\ne 0$, another
condition $\chi_{12}^{*}=\chi_{12}^{*}(j^4h(0))\neq 0 $ characterizes a $D_{12}$ partially umbilic point, see equation (\ref{chi11}) in the subsection \ref{chi1s}.  
This condition 
can be obtained   from  that given in defining equation (\ref{chi2} )    by an appropriate rotation  in the $(u,v)$-plane in equation (\ref{eq:di}).  In fact   the two patterns of  failure  of the  discriminant condition  $D_2$,  $ \frac{a}{b}=\left(\frac{c}{2b}\right)^2+2$  and $ a=2b$,  keeping the transversality condition $b\neq a$,   are permuted by a rotation.

The calculations of these conditions give shorter expressions working with $\chi_{12}$.

\end{remark}

\begin{remark} \label{rem:chi-longo12}
The condition $\chi_{12}\neq0$ expresses the transversality, at $D_{12}$,
 of the transition between types $D_1$ and
$D_{2}$,
as will follow from the analysis in section \ref{D12}.
As discussed in remark \ref{rem:adapted} the long expressions  in (\ref{chi2})  and (\ref{chi11}) can be simplified by taking $k_3 =1$ and $k = 0$ by applying to it an inversion and a homothety .
\end{remark}
 
\begin{theorem}\label{th:d12}    Let  $p$ be a $\fuu D12$  partially umbilic point.
Then it has  a
neighborhood  $V_p $ which intersects $\mathcal S_\alpha$
 on a   partially umbilic smooth curve
 ${\mathcal S}={\mathcal S}_1\cup {\mathcal S}_2\cup \{p\}$,  transversal to ${\mathcal P}_3(p)$,
 which consists on two arcs of   points 
 of types
 $ D_1$ and $ D_2$,
 having $p$ as common boundary.
 It 
 holds
 that:

\begin{itemize}
\item[i)] There exists a  partially umbilic separatrix surface of   \fp 2, $W = W_1({\mathcal S})\cup F_1 \cup W_2({\mathcal S})$,
 stratified as follows:

\noindent $\bullet$   \; $  W_1 ({\mathcal S})$  is a partially umbilic separatrix surface of an arc of  partially umbilic points, ${\mathcal S}_1$,  of type $D_1$.

\noindent $\bullet$   \; $ W_2 ({\mathcal S})$  is a partially umbilic separatrix surface of an arc  ${\mathcal S}_2$  of  partially umbilic points of type $D_2$.

\noindent $\bullet$   \; $F_1$  is a simple  leaf of \fp 2 asymptotic to the partially umbilic point $p$ of type $D_{12}$. 
 See Fig. \ref{D12S}.
 
\item[ii)]  There exists a  partially umbilic separatrix surface $ W_3({\mathcal S})\cup F_2  $ of \fp 2,
  stratified as follows:

\noindent $\bullet$   \; $ W_3 ({\mathcal S})$  is a partially umbilic separatrix surface of an arc ${\mathcal S}_2$  of  partially umbilic points of type $D_2$.

\noindent $\bullet$   \; $F_2$  is a double leaf of \fp 2 asymptotic to the partially umbilic point $p$ of type $D_{12}$.  See Fig. \ref{D12S}

  \item[iii)] There exists a  three dimensional wedge sector $\mathcal W$  such that
 $\partial {\mathcal W} $  is a variety 
 partitioned  
  into  strata of dimension two, one and zero,
 as follows:
 
 \noindent $\bullet$ \; $W_4({\mathcal S})\cup W_3({\mathcal S})\cup  W_2({\mathcal S}) $ 
 are the bi-dimensional strata. Moreover $W_4({\mathcal S})$ 
 consists 
 of leaves of \fp 2 which are asymptotic to the partially umbilic point $p$ of type $D_{12}$. See Fig.  \ref{fig:D12Estratificado}.

  \noindent $\bullet$ \; $F_1\cup F_2\cup {\mathcal S}_2$ are the one dimensional strata.

   \noindent $\bullet$ \; $p$ is the 
   zero 
    dimensional stratum.

\item[iv)]  The same conclusions  
hold 
for the foliation  \fp 1 which is
orthogonal to \fp 2 and singular in the set ${\mathcal S}$.   See Fig. \ref{D12S} (left).

 The behavior of the foliations  \fp 1 and \fp 2 a neighborhood $V_p $
of  $p$ is illustrated  in   Fig. \ref{D12S}. 
The stratification of the wedge  sector is illustrated in Fig. \ref{fig:D12Estratificado}.

\end{itemize}

\end{theorem}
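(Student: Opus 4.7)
The plan is to reduce the study of $\mathcal{F}_2(\alpha)$ near $p$ (and, symmetrically, of $\mathcal{F}_1(\alpha)$) to the dynamics of a smooth vector field on a three--dimensional surface, via the Lie--Cartan suspension outlined in Section \ref{sec:2}. In the adapted chart \eqref{eq:di} one writes the Rodrigues equation as an implicit quadratic differential equation $\mathcal{F}(u_1,u_2,u_3,p)=0$ in the slope variable $p$, whose discriminant cuts out $\mathcal{S}_{12}$. The Lie--Cartan lift produces a smooth vector field $X$ on $\{\mathcal{F}=0\}\subset V_p\times \mathbb{P}^1$ whose integral curves project to principal lines and whose singular set contains a smooth curve $\widetilde{\mathcal{S}}$ covering $\mathcal{S}_{12}$. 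Under $b(b-a)\neq 0$ the fibre of $\widetilde{\mathcal{S}}\to \mathcal{S}_{12}$ over a generic Darbouxian point consists of one or three hyperbolic saddles, yielding Theorem \ref{th:d123}.

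At a $D_{12}$ point the condition $a/b=2$ forces two of the three saddles in the fibre over $p$ to coalesce into a single semi--hyperbolic singularity, while the third remains hyperbolic and produces the surface $W_3(\mathcal{S})$. The central calculation is to show that $\chi_{12}\neq 0$ is exactly the transversality statement that the coalesced singularity is a saddle--node relative to $\widetilde{\mathcal{S}}$: namely, the linearization of $X$ has one zero eigenvalue tangent to $\widetilde{\mathcal{S}}$, one zero eigenvalue along the projective fibre, and one nonzero hyperbolic eigenvalue transverse to the fibre, with a nonvanishing quadratic term in the central projective direction whose coefficient is a nonzero multiple of $\chi_{12}$. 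I would carry this out using the coefficients tabulated in the Appendix. The invariant manifold theorem then delivers a strong separatrix curve in the fibre (projecting to the double leaf $F_2$) together with a one--parameter family of weak separatrices forming a smooth center--stable manifold, which projects to $W_3(\mathcal{S})$ and to the wedge face $W_4(\mathcal{S})$. The center--stable manifold of the persistent hyperbolic saddle provides the simple leaf $F_1$ together with the hyperbolic separatrix surfaces $W_1(\mathcal{S})$ and $W_2(\mathcal{S})$, which glue continuously across $p$ along the $D_1$ and $D_2$ arcs by Theorem \ref{th:d123} applied on each side.

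The principal obstacle will be item (iii): proving that the boundary of the wedge sector $\mathcal{W}$ carries the announced stratification of dimensions $2,1,0$, and in particular that every leaf of $\mathcal{F}_2(\alpha)$ through a point of $W_4(\mathcal{S})$ is asymptotic to $p$ itself rather than to a nearby point of $\mathcal{S}_2$. I expect to handle this by a Takens--Roussarie type normal form for the saddle--node along a transverse line of singularities, under which the center--stable manifold foliates as a trivial bundle of parabolic arcs with a unique apex on $\widetilde{\mathcal{S}}$; pulling back through the Lie--Cartan projection then yields both the stratification of $\partial\mathcal{W}$ and the required asymptotic behaviour at the apex $p$. Finally, item (iv) follows by repeating the argument after replacing $p$ with $-1/p$ in the Lie--Cartan lift, an involution under which the defining equation and the quantity $\chi_{12}$ transform covariantly, while orthogonality of $\mathcal{F}_1(\alpha)$ and $\mathcal{F}_2(\alpha)$ is preserved.
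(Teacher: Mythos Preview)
Your approach via the Lie--Cartan suspension is the same as the paper's, and the broad outline---two curves of singularities over $\mathcal{S}$, one normally hyperbolic and one semi-hyperbolic, with $\chi_{12}$ controlling the nondegeneracy of the latter---is correct. However, several details are off and one is internally inconsistent.

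\textbf{It is a node and a saddle that coalesce, not two saddles.} In the $D_2$ regime (Lemma~\ref{lema2}) the three singularities along the fibre are one node and two saddles. At $a=2b$ the node $\gamma_1$ (at $P_1(0)=0$) collides with the saddle $\gamma_3$ (at $P_3(0)=0$) to form the curve $\zeta_2$, while the saddle $\gamma_2$ at $P=c/b$ persists as $\zeta_1$. This is precisely why the resulting singularity along $\zeta_2$ is a saddle--node.

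\textbf{Your assignment of surfaces to singularities is inconsistent.} You first write that the persisting hyperbolic saddle ``produces the surface $W_3(\mathcal{S})$'', and two sentences later that it yields $F_1$, $W_1(\mathcal{S})$, $W_2(\mathcal{S})$. Only the second is correct: in the paper the hyperbolic curve $\zeta_1$ gives $W=\Pi(W^s_{\zeta_1})=W_1\cup F_1\cup W_2$, while the saddle--node curve $\zeta_2$ gives $\Pi(W^s_{\zeta_2})=W_3(\mathcal{S})\cup F_2$ together with the wedge structure coming from the center manifold $W^c_{\zeta_2}$ (which projects to $\mathcal{S}$, not to $W_3$ or $W_4$).

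\textbf{On $\chi_{12}$.} The quantity $\chi_{12}$ first appears as $\partial C/\partial u_3|_{(0,0)}$ (equation~\eqref{derXcpu1}); together with $\partial^2 C/\partial P^2|_{(0,0)}=-2c\neq 0$ this shows $\zeta_2$ is a parabola tangent to the $P$-axis. After the center--manifold reduction \eqref{mcd12a} it reappears as $\partial^2 U/\partial u^2(0,0)=c\chi_{12}/(b^2(k-k_3))$, the saddle--node coefficient on $W^c_{\zeta_2}$. Your description captures the second appearance but not the first.

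\textbf{Item (iv).} The involution $p\mapsto -1/p$ changes projective chart; it does not swap $\mathcal{F}_1$ with $\mathcal{F}_2$. Both foliations live on the single hypersurface $\mathcal{L}=0$ and are the two sheets of the double cover $\Pi:\mathcal{L}^{-1}(0)\to M^3\setminus\mathcal{S}$, so the analysis of $X_{\mathcal{L}}$ already treats both simultaneously.

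Your concern about item (iii) is legitimate; the paper's proof is itself terse here and essentially reads off the stratification from the phase portrait in Figure~\ref{RetratoFaseSND12} rather than invoking an explicit normal form.
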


\begin{figure}[h]
%\begin{center}
\centering
  \def\svgwidth{0.30\textwidth}
      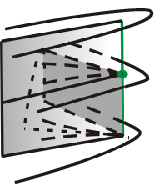
        \def\svgwidth{0.45\textwidth}
    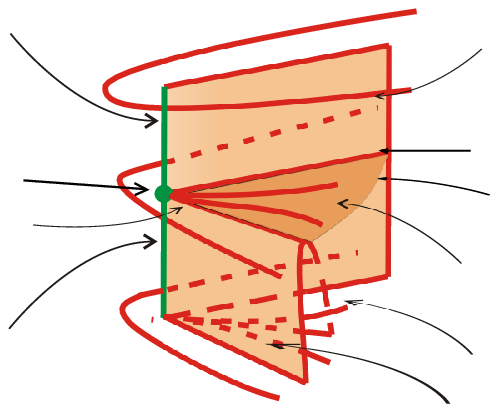
    \caption{Principal foliations $\mathcal{F}_1(\alpha)$ (Left) and $\mathcal{F}_2(\alpha)$(right)  in 
    a neighborhood of the point $D_{12}$.}
  \label{D12S}
 % \end{center}
\end{figure}
%SS ESTA FIG ACIMA E NA SEGUINTE O TEXTO DA FIG ESTÃO ALGO ATRAVANCADAS. DARIA PARA REDUCIR O TAMANHO DOS FONTS NOS TEXTOS EM MAIÚSCULAS??

\begin{figure}[h]
\begin{center}
\hspace{-2cm}
    \def\svgwidth{0.80\textwidth}
    \input{ 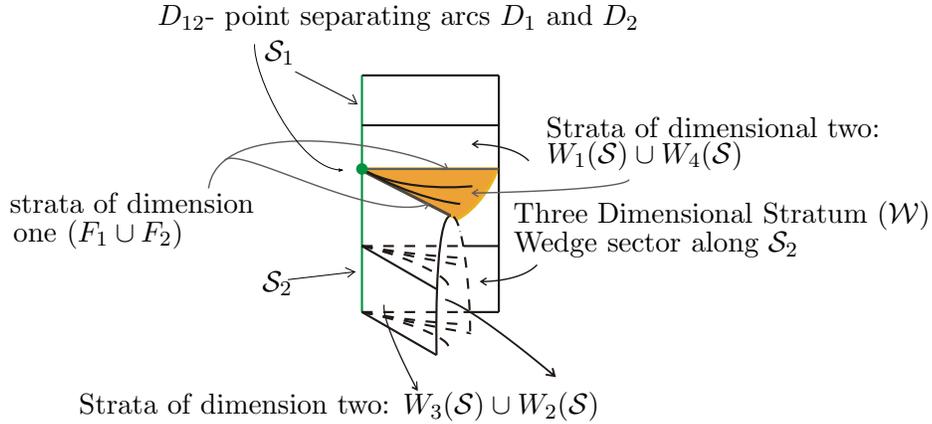_tex}
    \caption{Stratification of the wedge sector (strata of dimensions three, two and  one) and umbilic separatrices of  a 
    $D_2-$ $D_{12}-$ $D_1$ 
     partially umbilic   arc 
     of ${\mathcal F}_{2}(\alpha)$. }
  \label{fig:D12Estratificado}
    \end{center}
\end{figure}

\begin{definition} 
The leaf $F_1$ in the item $i)$ of theorem \ref{th:d12} is called
a simple  $D_{12}$ separatrix, and  the leaf
 $F_2$ in the item $ii)$   is called a double $D_{12}$
  separatrix. There are no leaves tangent to  $F_1$ and asymptotic to the point $D_{12}$. There are infinitely many leaves tangent to $F_2$ and asymptotic to the point $D_{12}$.

\end{definition}

\section{$D_{23}$ Partially Umbilic Points} \label{sec:d23_PU}

\begin{definition}\label{defD23}
Let $ p $ be a partially umbilic point and $\alpha$ parametrized as
 in equation \eqref{eq:di}. The point $p$ is called {semi-Darbouxian of  type}
 $ D_{23} $ if the following conditions hold:
\begin{itemize}
\item[ D$_{23})$] $b=a\neq 0$,  $ b(q_{201} - q_{021}) + c q_{111}\neq 0$ and $\chi_{23}\ne 0$ where,

\begin{equation}\label{chid23}
\aligned 
\chi_{23}  = &\left(k-{  k_3} \right)  \left({  b A } +c  B  -b{ C }-2 \,b{k}^{3} \right) \\
+& \left(3\,{  q^2_{201}}-{  q_{201}}\, q_{021}-
2\,{ q^2_{111}} \right) b+3\,q_{111}\, q_{201}\,c
\endaligned
\end{equation}

\end{itemize}
\end{definition}

\begin{theorem} \label{th:d23} Let  $p$ be a   
  $\fuu D23$ partially umbilic point.
Then there exists a  neighborhood $V_p$ of $p$ where $\mathcal S \cap V_p={\mathcal S}_1\cup {\mathcal S}_2 \cup \{p\} $  is a smooth curve tangent to the
partially
umbilic plane,
consisting of two arcs  of partially umbilic points of types  $D_2$ and  $D_3$,  separated by the $D_{23}$ point.
The following holds in $V_p$.

\begin{itemize}
\item[i)] There exists a regular surface $W_1(\mathcal S) \subset V_p $ containing
the  partially umbilic curve $\mathcal S$
with
the following property:
a connected component of $  W_1(\mathcal S)\setminus {\mathcal S}$
 is invariant by \fp 1
and the other is invariant by \fp 2.  See
Fig. \ref{D23S}.

\item[ii)] There exists a regular surface $W_2(\mathcal S) \subset V_p $ containing
the  partially umbilic curve $\mathcal S$
with the following property:
one of 
the
connected components of $  W_2(\mathcal S)\setminus {\mathcal S}$
 is invariant by \fp 1 and the other is invariant by \fp 2.  See
Fig. \ref{D23S}.

\item[iii)] There exists a surface $W_3(\mathcal S) \subset V_p $ containing
the  partially umbilic curve $\mathcal S$
with the following property:
one of 
the
connected components of $  W_3(\mathcal S)\setminus {\mathcal S}$
 is invariant by \fp 1 and the other is invariant by \fp 2. 

Moreover $F_5\subset W_3$ is a leaf of \fp 2 asymptotic to the $D_{23}$ partially umbilic point $p$.
  See
Fig. \ref{D23S}.

    \item[iv)] There exists a  three dimensional wedge sector ${\mathcal W}_1$  such that
 $\partial {\mathcal W}_1 $  is a variety (union of strata of dimension two, one and zero) partitioned  as follows:
 
 \noindent $\bullet$ \; $ W_2({\mathcal S})\cup  W_1({\mathcal S}) $ are the bi-dimensional strata. See Fig.   \ref{D23Black}.
 
  \noindent $\bullet$ \; $F_1\cup F_2\cup {\mathcal S}_1\cup {\mathcal S}_2$ are the one dimensional strata.
  
   \noindent $\bullet$ \; $p$ is the one dimensional stratum.

   \item[v)] There exists a  three dimensional wedge sector ${\mathcal W}_2$  such that
 $\partial {\mathcal W}_2 $  is a variety  partitioned  
 into strata of dimensions  two, one and zero, as follows:
 
 \noindent $\bullet$ \; $W_4({\mathcal S})\cup W_2({\mathcal S})\cup  W_1({\mathcal S}) $ are the bi-dimensional strata. Moreover $W_4({\mathcal S})$ 
 consists 
 of leaves of \fp 2 which are asymptotic to the partially umbilic point $p$ of type $D_{23}$.  See Fig.   \ref{D23Red}.
 
  \noindent $\bullet$ \; $F_3\cup F_4  \cup {\mathcal S}_1\cup {\mathcal S}_2$ are the one dimensional strata.
  
   \noindent $\bullet$ \; $p$ is the 
   zero
   dimensional stratum.

 \item[vi)]  The behavior of the  principal foliations  \fp 1 and \fp 2 in the
neighborhood of $ p $ is as illustrated  in Fig. \ref{D23S} (left  and right).
\end{itemize}
\end{theorem}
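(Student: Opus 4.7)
The approach mirrors the proof of Theorem \ref{th:d12}, adapted to the present transversality failure $b-a=0$ in place of $a-2b=0$.

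First I would work in the adapted Monge chart of Section \ref{sec:Monge_jet} and write the principal implicit differential equation (the binary differential equation whose roots give the directions of $\mathcal{F}_1(\alpha)$ and $\mathcal{F}_2(\alpha)$) in terms of the jet coefficients of $h$ in \eqref{eq:di}. At a Darbouxian point the discriminant locus $\mathcal{S}$ of this BDE meets the plane $\mathcal{P}_3(p)$ transversally, but since $a=b$ here, a direct computation shows that $\mathcal{S}$ is instead tangent to $\mathcal{P}_3(p)$ at $p$. The auxiliary nonvanishing $b(q_{201}-q_{021})+c\,q_{111}\neq 0$ identifies this contact as quadratic and provides a smooth parametrization of $\mathcal{S}$ near $p$ by a coordinate transverse to $\mathcal{P}_3(p)$, splitting $\mathcal{S}\cap V_p$ cleanly into the two arcs $\mathcal{S}_1$ and $\mathcal{S}_2$ on either side of $p$.

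Next I would invoke the Lie--Cartan suspension of Section \ref{sec:2}: lifting the principal BDE to the projectivized tangent bundle $PTM^3$ produces a smooth vector field $X$ whose integral curves project back to the principal lines. Over a Darbouxian point the suspended partially umbilic set $\widetilde{\mathcal{S}}$ consists of branches of normally hyperbolic equilibria (one branch per foliation); over a $D_{23}$ point these branches meet at a single equilibrium $\widetilde{p}$, producing a saddle-like singularity with one zero eigenvalue and two nonzero eigenvalues. I would compute this linearization explicitly, verify hyperbolicity of the nonzero eigenvalues under the transition conditions, and check that $\chi_{23}\neq 0$ is precisely the nondegeneracy of the leading second-order term on the one-dimensional center direction of $\widetilde{p}$.

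The invariant surfaces $W_1(\mathcal{S})$, $W_2(\mathcal{S})$, $W_3(\mathcal{S})$ and $W_4(\mathcal{S})$ would then be constructed as projections to $M^3$ of the various two-dimensional invariant manifolds of $X$ through $\widetilde{\mathcal{S}}$ (stable, unstable, and center-(un)stable). The fact that each connected component of $W_i\setminus\mathcal{S}$ is invariant by one of $\mathcal{F}_1(\alpha)$, $\mathcal{F}_2(\alpha)$ follows because the two foliations correspond to the two branches of $\widetilde{\mathcal{S}}$ through $\widetilde{p}$. The distinguished leaves $F_1,\dots,F_5$ asymptotic to $p$ itself would be recovered as projections of the strong stable and unstable manifolds of $\widetilde{p}$, and the wedge sectors $\mathcal{W}_1,\mathcal{W}_2$ together with their stratified boundaries would emerge from the hyperbolic topology of $X$ near $\widetilde{p}$.

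The main obstacle I expect is the analysis of the center manifold of $X$ at $\widetilde{p}$. Because $\widetilde{p}$ carries a zero eigenvalue, the distinction between leaves asymptotic only to $\mathcal{S}\setminus\{p\}$ and leaves asymptotic to $p$ itself---and hence the count of the special separatrices $F_1,\ldots,F_5$ and the shape of $W_4(\mathcal{S})$---is decided by a single second-order coefficient on the center manifold, which is exactly $\chi_{23}$ in \eqref{chid23}. Computing this normal form from the four-jet of $h$ and verifying that $\chi_{23}\neq 0$ produces the stated finite and stratified picture is the analytic heart of the argument; once it is in hand, standard invariant manifold theory then yields the smoothness of all the $W_i(\mathcal{S})$ and closes the proof.
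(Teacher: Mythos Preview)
Your overall strategy (Lie--Cartan suspension, then invariant manifold theory) is the same as the paper's, but your description of the suspended picture contains a genuine error that would derail the argument. You assert that over the $D_{23}$ point the branches of equilibria of $X$ ``meet at a single equilibrium $\widetilde p$'' with a \emph{one}-dimensional center direction. In fact the discriminant of the cubic $C(u_1,P)=0$ in $P$ is $-\tfrac{1}{108}\,\tfrac{c^2+4b^2}{b^2}+O(u_1)$, which is strictly negative for all small $u_1$; hence there are \emph{three distinct} curves of singularities $\gamma_1,\gamma_2,\gamma_3$ throughout the neighborhood, and they never collide. The curves $\gamma_2,\gamma_3$ remain normally hyperbolic saddles across the transition (eigenvalues $\pm b(P_i(0)^2+1)+O(u_1)$), and their stable manifolds project to the regular surfaces $W_1(\mathcal S)$ and $W_2(\mathcal S)$ directly. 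The degeneracy lives entirely on $\gamma_1$: one of its transverse eigenvalues, $\lambda_3(u_1)$, vanishes at $u_1=0$ and changes sign, which is precisely the node/saddle switch that effects the $D_2\to D_3$ transition. Consequently at $\gamma_1(0)$ there are \emph{two} zero eigenvalues (one tangent to $\gamma_1$, one from the vanishing normal eigenvalue), and the center manifold $W^c_{\gamma_1}$ is \emph{two}-dimensional and contains the entire curve $\gamma_1$.

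This matters for where $\chi_{23}$ enters. It is not ``a single second-order coefficient on a one-dimensional center manifold''; rather, one must analyze the planar vector field $X|_{W^c_{\gamma_1}}$, which has $\gamma_1$ as a whole curve of equilibria, and $\chi_{23}\neq 0$ is the coefficient governing its phase portrait (a saddle--node along a line of equilibria, cf.\ \cite{dla}). The picture you describe---equilibrium branches colliding, with the transition read off a one-dimensional center manifold---is what happens at $D_{12}$, where two $P$-roots coalesce (see Lemma~\ref{equilD12} and equation~\eqref{discrdd}); at $D_{23}$ the mechanism is instead loss of normal hyperbolicity along a persisting branch. Until you correct this, you will not locate the three separate sources of $W_1,W_2,W_3$, nor the correct center-manifold computation that produces $W_3$, $F_5$, and the stratification of the wedge sectors.
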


\begin{figure}[h]
%\psfrag{W01}{$\hskip .5cm W_{3}$}
%\psfrag{W11}{$W_{2}$}
%\psfrag{W3}{$W_{2}$}
%\psfrag{W12}{$W_{1}$}
%\psfrag{W13}{$W_{2}$}
%\psfrag{W14}{$W_{1}$}
%\psfrag{W02}{$W_{3}$}
%\psfrag{W22}{$W_{1}$}
%\psfrag{W23}{$W_{2}$}
%\psfrag{W21}{$W_{2}$}
%\psfrag{W24}{$W_1$}
%\psfrag{D23}{$D_{23}$}
\begin{center}
%\hspace{-2cm}
   \def\svgwidth{0.9\textwidth}
    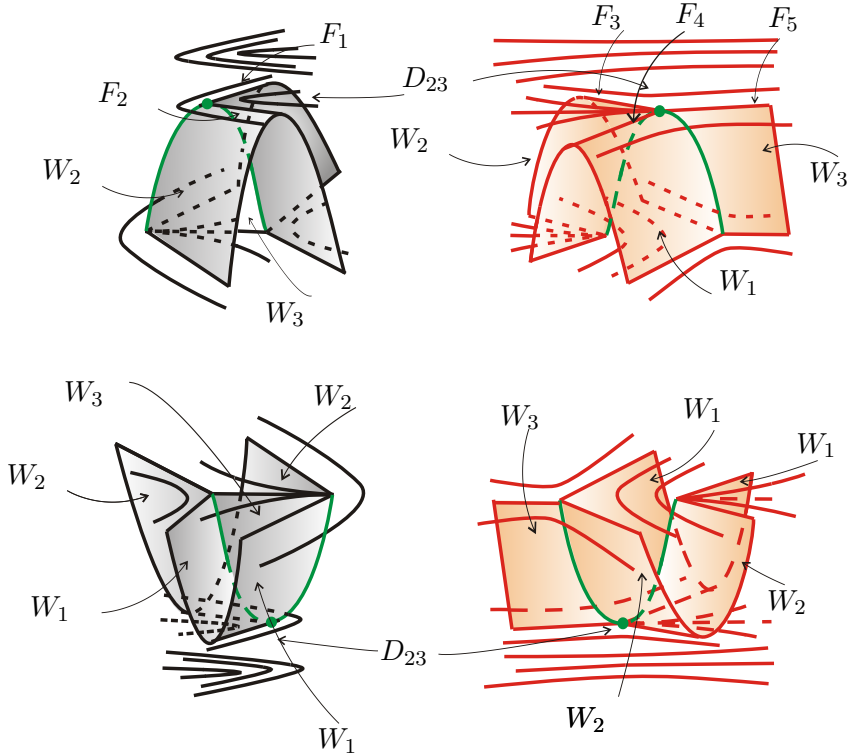
    \caption{Principal foliations  $\mathcal{F}_1(\alpha)$ (Left, views from  top and bottom) and  $\mathcal{F}_2(\alpha)$ (Right, views from top and bottom) in the neighborhood of  a   point $D_{23}$.  }
  \label{D23S}
    \end{center}
\end{figure}

 \begin{figure}[h]
\begin{center}
%\hspace{-4cm}
    \def\svgwidth{1.0\textwidth}
    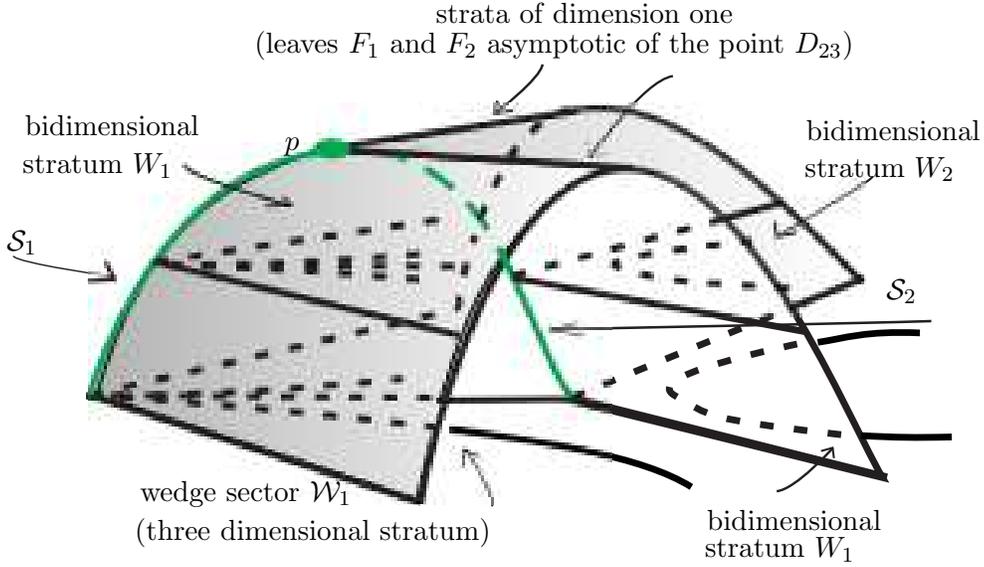
    \vspace{.3cm}
    \caption{Stratification of the wedge sector (strata of dimensions three, two and  one) and umbilic separatrices of an arc $D_2 - D_{23} - D_3$ of ${\mathcal F}_{1}(\alpha)$. }\label{D23Black}
       \end{center}
       \end{figure}

       \begin{figure}[h]
       \begin{center}
          \def\svgwidth{0.7\textwidth}
    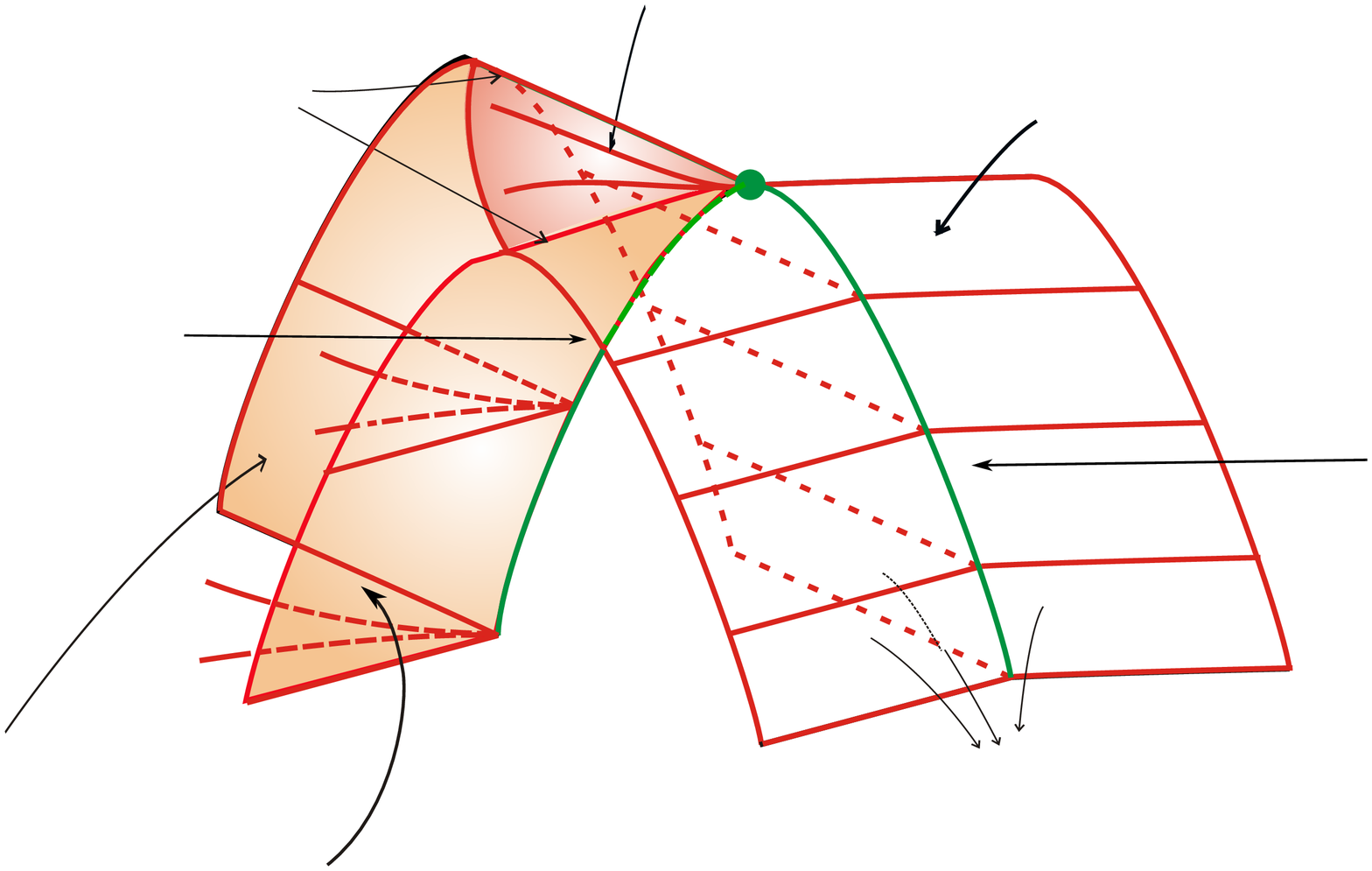
   \caption{  Stratification of the wedge sector (strata of dimensions three, two and  one) and partially umbilic separatrix surfaces of an arc $D_2  - D_{23}  - D_3$ of ${\mathcal F}_{2}(\alpha)$. } \label{D23Red}
    \end{center}
\end{figure}

\begin{remark} \label{rem:chi23}
The condition $\chi_{23}\neq0$  is equivalent to  the quadratic contact, at $D_{23}$,  of the partially umbilic 
curve $\mathcal S_{12}$ with the plane field $\mathcal P_3$.

The analysis in section \ref{sec:d23} shows that  condition $\chi_{23}\neq0$   gives also the {\emph saddle - node} character of an equilibrium point which
is essential  to establish the principal configuration around the point $D_{23}$, exhibiting 
the transition between the types $D_2$ and
$D_{3}$. 
\end{remark}

\begin{remark}\label{rem:indep_coord}

\noindent   The conditions that define the types $D_i$ and
$D_{ij}$ are independent of  the  coordinate charts.
 This can be verified through  a direct
calculation, similar to that
performed
in the two dimensional case, \cite{gs1,  gs5}.
 The conditions above  that define in  sections 
 \ref{defDi}, \ref{defD12} and \ref{defD23}
  the partially umbilic points studied in this work
 are
closely related to those that define the {\it Darbouxian and
Semi-Darbouxian umbilic points} in the case of  two dimensional surfaces, see
\cite{gs4} and \cite{gs1, gs5, gs6}.
              \end{remark}

\section {Differential Equations of the Lines of Principal Curvature and  Lie-Cartan Suspension }\label{sec:2}

In this section will be obtained the differential equation of the principal line fields \lp i.

Let $\alpha:\func M3{\mathbb R} 4$ be an immersion of  class $C^k,\; k\ge 6$,
where $M^3$
is compact and oriented manifold. The space ${\mathbb R}^4$ is also oriented
with the orientation fixed by the canonical basis $\{ E_1, E_2,
E_3, E_4\} $.

In a local chart  $\uu :\func M3{\mathbb R} 3$
the first and the second fundamental forms associated to  the
immersion  $\alpha$ are given,  respectively, by
$ \;I_\alpha = \sum \gf ij d\ui i d\ui j\; $ and $ \; II_\alpha = \sum \la ijd\ui
i d\ui j,$  where
$\gf ij = \langle\pa i,\pa j\rangle \; \text{and}\; \la ij = \langle\pad ij,\mathcal N
\rangle$ and  $ \mathcal N $ is the positive normal vector to the immersion
 $\alpha$ which is defined by:
 $ \; {\mathcal N}_{\alpha}(p) = (\alpha_1 \we \alpha_2 \we \alpha_3)(p)/|
(\alpha_1 \we \alpha_2 \we \alpha_3)(p)|$

The normal curvature at the point $ p $ in the direction
 $ \; v
= (d\ui 1, d\ui 2, d\ui 3)\quad $  is defined by
$k_n(p)(v)
= (\frac{ II_{\alpha}}{I_{\alpha}})(p)(v,v)$.

The directions at which $k_n $ assume critical  values
are the {\it principal directions} and the
values of $k_n $ in these directions are the
{\it principal curvatures}. They
will be denoted by  $ \fu k1(p) \leq  \fu k2(p)
\leq \fu k3(p) $.
 
To calculate the {\em principal directions}
observe that the normal curvature can also  defined by
 $k_n(p)= II_\alpha
$
subject to the
condition $\fu I\alpha=1.$
Therefore using the method of Lagrange
with  parameter $ k,$ it results:

$$\aligned \left(\begin{matrix}
\la 11&\la 12&\la 13\\\la 12&\la 22&\la 23\\\la 13&\la 23&\la
33\end{matrix}\right)  \left(\begin{matrix} d\ui 1\\d\ui 2\\d\ui 3\end{matrix}\right)  &=k
\left(\begin{matrix}\gf 11&\gf 12&\gf
13\\\gf 12&\gf 22&\gf 23\\\gf 13&\gf 23&\gf 33\end{matrix}\right) \left(\begin{matrix}
d\ui 1\\d\ui 2\\d\ui 3\end{matrix}\right)
\endaligned
$$

\noindent  Therefore  the principal directions are given by the following system
of equations:
 \begin{equation} \label{eq:1}
 \aligned
(\la 11-\fu ki\gf 11)du_1 + (\la 12 - \fu ki\gf 12)du_2 + (\la 13-\fu
 ki\gf 13)du_3 &= 0   \\
(\la 12-\fu ki\gf 12)du_1 + (\la 22 - \fu ki\gf 22)du_2 + (\la 23-\fu
 ki\gf 23)du_3 &= 0\\
(\la 13-\fu ki\gf 13)du_1 + (\la 23 - \fu ki\gf 23)du_2 + (\la 33-\fu
 ki\gf 33)du_3 &= 0  \endaligned \end{equation}

\noindent  where $\fu ki~~ (i=1,2,3)$  are the principal  curvatures,
which are  defined by the equation $ \text{det}(\la ij - k \gf ij) = 0
$.
 This equation is useful only when the correspondent  principal curvature is smooth. Near the partially umbilic points two   principal curvatures are only continuous and it  is more convenient to consider implicit differential equations.

 Next  will be obtained  the Lie-Cartan vector field that will be
used for the analysis of principal curvature line near the
 partially umbilic points in this work.

Consider the plane passing through $q\in M$ having the principal direction $e_3(q)$ as normal vector:

\begin{equation}\label{campoplanos}
\mathcal P_{3}(q)=\{(du_1,du_2,du_3);\langle(du_1,du_2,du_3),G\cdot (e_3(q))^T\rangle=0\},
\end{equation}
where $G=[g_{ij}]_{3\times3}$ is the matrix of the first fundamental form.

Using equation \eqref{eq:1},
solving the linear system
 it follows
 that the principal direction
 $e_3(q)=(du_1,du_2,du_3)$ is
 given by

$$\aligned
\frac{du_1}{du_3}\; &=\;\frac{U_1(u_1,u_2,u_3)}{W_1(u_1,u_2,u_3)},\;\;\;\;
\frac{du_2}{du_3}\; &=\; \frac{V_1(u_1,u_2,u_3)}{W_1(u_1,u_2,u_3)}
\endaligned $$

\noindent where
\begin{equation}\label{eq:UVW1}
\aligned
U_1&=\left(g_{12}g_{23}-g_{22}g_{13} \right)k_3^{2}+ \left(-g_{12}\lambda_{23}-g_{23}\lambda_{12}+\lambda_{22}g_{13}+g_{22}\lambda_{13} \right) k_3+\\ &+\lambda_{23}\lambda_{12}-\lambda_{22}\lambda_{13}\\
V_1&=\left(-g_{11}g_{23}+g_{13}g_{12} \right)k_3^{2}+ \left(\lambda_{11}g_{23}+g_{11}\lambda_{23}-\lambda_{13}g_{12}-g_{13}\lambda_{12} \right)k_3+\\
&-\lambda_{11}\lambda_{23}+\lambda_{13}\lambda_{12}\\
W_1&=\left(g_{11}g_{22}-g_{12}^{2} \right)k_3^{2}+ \left(-\lambda_{11}g_{22}-g_{11}\lambda_{22}+2\lambda_{12}g_{12}\right)k_3+\\
&+\lambda_{11}\lambda_{22}-\lambda_{12}^{2}.
\endaligned
\end{equation}

Notice  that   $W_1\neq0$ in a neighborhood of the partially umbilic point. This follows from
the calculations  displayed in subsections \ref{ssec:AfirstFF},  \ref{ssec:AsecondFF} and \ref{ssec:Ak3} of  the Appendix  \ref{sec:App},
which give $W_1(0,0,0)=(k-k_3)^2$.

Therefore, from equation (\ref{campoplanos}),  the  field of planes  $\mathcal P_{3} $ is defined by the
field of
 kernels of
 %S  ADICIONEI LINHA 1053 the 
 the 
differential one form
\begin{equation}\label{eqcampoplanos}
 \aligned
\omega=&
[g_{11}{U_1}+ g_{12}V_1+g_{13}{W_1} ]du_1 +
[g_{12}{U_1}+g_{22}{V_1}+
g_{23}{W_1} ] du_2 \\
 +&
[g_{13} {U_1} +  g_{23}V_1 +g_{33} {W_1}] du_3=
0
\endaligned
\end{equation}

\begin{remark} \label{rem:nonfrob}
The plane field $\mathcal P_{3} $ is in general not integrable.
  Therefore the
 analysis of the integral foliations $\mathcal{F}_1$  and $\mathcal{F}_2$  of the line fields $\mathcal L _1$
 and  $\mathcal L _2$ near a partially umbilic curve $\mathcal S_{12}$
 is strictly three-dimensional. In other words it cannot be reduced
  to a family of two-dimensional principal configurations on the integral leaves of  the
  field of planes $\mathcal P_{3}$,  obtained from  Frobenius  Theorem. See Spivak \cite{sp2}.

 In fact, calculation in the Monge chart in  equation \eqref{eq:di},
 gives that  $\omega\wedge d\omega$ is expressed   near $0$  by:

$$(\omega\wedge d\omega) =-{(k-k_3)^{2}}[
  q_{111}\left(a- b \right) u_1+
  \left(b\, (q_{021}- q_{201}) - c \,q_{111} \right) u_2+h.o.t.]
$$

This shows that
the condition for  Frobenius integrability $\omega\wedge d\omega = 0 $, identically,  does not hold generically.
\end{remark}

The principal directions $e_1(q)$ and $e_2(q)$
associated to $k_1(q)$ and $k_2(q)$ belong to the plane $\mathcal P_{3}(q)$.

Consider the  first and second fundamental forms of $\alpha$ restricted to $\mathcal P_{3}$ and write,

\begin{align*}
I_r(du_1,du_2)&=I_{\alpha}\Big|_{du_3=\mathcal{U}(u_1,u_2,u_3)du_1+\mathcal{V}(u_1,u_2,u_3)du_2}\\
&=E_rdu_1^2+2F_rdu_1du_2+G_rdu_2^2,\\
II_r(du_1,du_2)&=II_{\alpha}\Big|_{du_3=\mathcal{U}(u_1,u_2,u_3)du_1+\mathcal{V}(u_1,u_2,u_3)du_2}\\
&=e_rdu_1^2+2f_rdu_1du_2+g_rdu_2^2,
\end{align*}

\noindent where
$$\mathcal{U}=-\frac{[g_{11}U_1+ g_{12}{V_1}+g_{13} {W_1}]}{[g_{13}{U_1}+  g_{23}{V_1}+g_{33}{W_1}]}, \; \mathcal{V}=-\frac{[g_{12}{U_1} +g_{22}{V_1}+
g_{23}{W_1}]}{[g_{13}{U_1}+  g_{23}{V_1}+g_{33}{W_1}]}, $$

$$E_r=\frac{\partial^2 I_r}{2\partial du_1^2}(0,0), \; F_r=\frac{\partial^2 I_r}{2\partial du_1\partial du_2}(0,0) \; G_r=\frac{\partial^2 I_r}{2\partial du_2^2}(0,0),$$

$$ e_r=\frac{\partial^2 II_r}{2\partial du_1^2}(0,0),  \; f_r=\frac{\partial^2 II_r}{2\partial du_1\partial du_2}(0,0),  \; g_r=\frac{\partial^2 II_r}{2\partial du_2^2}(0,0).$$

Write $k_n^r(q;du_1,du_2)=\m\frac{II_r}{I_r}(q,du_1,du_2)$,
where  $I_r(q)$ and $II_r(q)$ are the first and second fundamental forms of  $\alpha$ restricted to the plane $\mathcal P_{3}(q)$.

Let $P=\m\frac{du_2}{du_1}$. Therefore
the slopes of the
principal directions  $e_1(q)$ and $e_2(q)$ in the plane $\mathcal P_{3}$ are defined by the implicit differential  equation:
\begin{equation}\label{eqLC}
\aligned
{\mathcal L}=& L_r(u_1,u_2,u_3)P^2+M_r(u_1,u_2,u_3)P+N_r(u_1,u_2,u_3)=0\\
\omega=&[g_{11}{U_1}+ g_{12}V_1+g_{13}{W_1} ]du_1 +
[g_{12}{U_1}+g_{22}{V_1}+
g_{23}{W_1} ] du_2 \\
 +&
[g_{13} {U_1} +  g_{23}V_1 +g_{33} {W_1}] du_3=
0,   \endaligned
\end{equation}

 \noindent where
 
\begin{equation} \label{eq:restr_LMN}
L_r=F_{r}g_{r}-f_{r}G
_{r}, \quad M_r=E_{r}g_{r}-e_{r}G_{r},\quad N_r=E_{r}f_{r}-e_{r}F_{r}.
\end{equation}

\begin{remark}\label{remarkPU}
The partially umbilic  points {\rm (}$k_1=k_2${\rm)} are defined by $L_r(u_1,u_2,u_3)=0$ and $M_r(u_1,u_2,u_3)=0$.
\end{remark}

Let $\mathcal{L}(u_1,u_2,u_3,P)=L_r(u_1,u_2,u_3)P^2+M_r(u_1,u_2,u_3)P+N_r(u_1,u_2,u_3)$.

Consider in the $(u_1,u_2,u_3,P)$-space the hypersurface
\begin{equation}\label{hipLieCartan}
{\mathcal L}=\{(u_1,u_2,u_3,P):\;\mathcal{L}(u_1,u_2,u_3;P)=0\},
\end{equation}
called {\em Lie-Cartan hypersurface}.

\begin{remark} 
 %S To cover the whole  sub-bundle,  of the tangent projective over M, 
 %S  defined by the  lines contained  the  
 %S  TROQUEI POR LINHAS 1161 ATÃ   1165 MEXI ESTILO - INSERTEI in -- E PONTUAÃAO..
   To cover the whole  sub-bundle of the tangent projective 
   bundle over M, 
  defined by the  lines contained   in   the
field of planes orthogonal to $\mathcal P _3$    one  
 must consider also the equation 
 $L_r du^2_2 + M_r du_1du_2 + N_r du^2_1 = 0$, with
the projective coordinate $ Q = du_1/du_2$. 
Direct calculation shows that, in the cases considered in this paper,  there 
are %S  ADICIONEI are  
%SS + ATENCAO:  "NO" NESTE PAPER, ++ NO CASO DE D_13 (TIPO BOG-TAKENS) SIM  ++ 30/10  JÁ CORRIGI NA LINHA 1178 
no singularities to analyze near the points with  $Q=0$  which represent the 
points  $P=\infty$.
\end{remark}

\begin{proposition}\label{prop:1}  The vector field
$
X=X_{\mathcal L}= X_1\frac{\partial}{\partial u_1}+X_2\frac{\partial}{\partial u_2}+
X_3\frac{\partial}{\partial u_3}
+X_4\frac{\partial}{\partial P}
$
where
\begin{equation}\label{coordLCVF}
\begin{array}{lll}
X_1&=&\mathcal{L}_P\\
X_2&=&P\mathcal{L}_P\\
X_3&=&\left(\mathcal{U}+\mathcal{V}P\right)\mathcal{L}_P\\
X_4&=&-\left(\mathcal{L}_{u_1}+P\mathcal{L}_{u_2}+\mathcal{L}_{u_3}\left(\mathcal{U}+\mathcal{V}P\right)\right)
\end{array}
\end{equation}

\noindent  is of class $C^{k-3}$, tangent to $\mathcal L$ and the projections
of the integral curves of $X$ by $\pi(u_1,u_2,u_3,P)=(u_1,u_2,u_3)$ are  the principal lines of
the two principal foliations ${\mathcal F}_1$ and ${\mathcal F}_2$
which are singular
along
the  partially umbilic curve ${\mathcal S}_{12}$.
\end{proposition}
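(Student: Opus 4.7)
The plan is to verify the three claims in sequence: tangency of $X$ to $\mathcal{L}$, the regularity class, and the geometric interpretation of the projected integral curves.

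For tangency, I would simply compute $X(\mathcal{L})$, the derivative of the defining function of $\mathcal{L}$ along $X$. Using the coordinates in \eqref{coordLCVF},
\[
X(\mathcal{L}) = \mathcal{L}_P\,\mathcal{L}_{u_1} + P\mathcal{L}_P\,\mathcal{L}_{u_2} + (\mathcal{U}+\mathcal{V}P)\mathcal{L}_P\,\mathcal{L}_{u_3} + X_4\,\mathcal{L}_P,
\]
and the choice of $X_4$ makes this vanish identically (not merely on $\mathcal{L}$). Thus $X$ is tangent to $\mathcal{L}$ everywhere on the hypersurface, and in particular its integral curves with initial condition on $\mathcal{L}$ stay on $\mathcal{L}$.

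For the class of differentiability, I would trace the smoothness through the construction. Since $\alpha \in \mathrm{Imm}^k$, the coefficients $g_{ij}$ and $\lambda_{ij}$ of the first and second fundamental forms are of class $C^{k-1}$ and $C^{k-2}$ respectively. Near a partially umbilic point of $\mathcal{S}_{12}$ the third principal curvature $k_3$ is a simple eigenvalue of $\omega_\alpha$ and therefore also of class $C^{k-2}$. Consequently the coefficients $U_1, V_1, W_1$ defined in \eqref{eq:UVW1} are $C^{k-2}$; because $W_1(0,0,0) = (k-k_3)^2 \neq 0$, the quotients $\mathcal{U}, \mathcal{V}$ defining the plane field $\mathcal{P}_3$ are $C^{k-2}$ in a neighborhood. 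The restricted coefficients $E_r,F_r,G_r,e_r,f_r,g_r$ and hence $L_r,M_r,N_r$ inherit the same regularity, so $\mathcal{L}$ is $C^{k-2}$ and its first partial derivatives are $C^{k-3}$. Each component $X_1,\dots,X_4$ then lies in $C^{k-3}$.

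To identify the projected integral curves with principal lines, I would interpret the coordinate equations $X_1 = \mathcal{L}_P$, $X_2 = P\mathcal{L}_P$, $X_3 = (\mathcal{U}+\mathcal{V}P)\mathcal{L}_P$ geometrically: away from the zero set of $\mathcal{L}_P$ they yield $du_2/du_1 = P$ and $du_3/du_1 = \mathcal{U} + \mathcal{V}P = \mathcal{U} + \mathcal{V}(du_2/du_1)$, which is exactly the condition $du_3 = \mathcal{U}\,du_1 + \mathcal{V}\,du_2$ that the tangent vector lie in $\mathcal{P}_3$. Combined with the constraint $\mathcal{L}(u_1,u_2,u_3,P)=0$, which says $P$ is a root of the binary quadratic $L_rP^2+M_rP+N_r$, the projected tangent line coincides with one of the two principal directions $e_1(q),e_2(q)$ contained in $\mathcal{P}_3(q)$. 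Conversely, any arc of class $C^1$ tangent to $\mathcal{L}_1(\alpha)$ or $\mathcal{L}_2(\alpha)$ lifts canonically to $\mathcal{L}$ by setting $P=du_2/du_1$, and the lift satisfies the system \eqref{coordLCVF} up to the common factor $\mathcal{L}_P$, hence up to reparametrization it is an orbit of $X$. Finally, by Remark \ref{remarkPU}, points where $L_r=M_r=0$ are exactly the partially umbilic points; at such points $\mathcal{L}_P=2L_rP+M_r=0$, so $X_1=X_2=X_3=0$ and the projection of the singularities of $X$ on $\mathcal{L}$ is contained in $\mathcal{S}_{12}$.

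No genuine obstacle arises here, since tangency follows by construction and the rest is bookkeeping; the only delicate point is to ensure that the smoothness argument is carried out in a neighborhood of $\mathcal{S}_{12}$ where $k_3$ remains a simple eigenvalue, so that $\mathcal{P}_3$, and hence $\mathcal{U},\mathcal{V}$, are well defined and as smooth as the immersion allows. This is guaranteed by the hypothesis $k_1(p)=k_2(p)<k_3(p)$ and the non-vanishing $W_1(0)=(k-k_3)^2$ recorded right after \eqref{eq:UVW1}.
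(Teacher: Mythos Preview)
Your proposal is correct and follows the same approach as the paper: the paper's own proof is a two-line ``by direct verification'' that $X$ is tangent to $\mathcal{L}^{-1}(0)$ and that its integral curves project to solutions of \eqref{eqLC}, and you have simply carried out that verification explicitly. Your added discussion of the regularity class $C^{k-3}$ and of the role of the condition $W_1(0)=(k-k_3)^2\neq 0$ fills in details the paper leaves implicit, but there is no divergence in method.
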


\begin{proof} By direct verification the
vector field $X=X_{\mathcal L}$ is
 tangent to the hypersurface  ${\mathcal L}^{-1}(0)$ and its integral curves
 project to solutions of the implicit differential equation \eqref{eqLC}.
\end{proof}

\section{Proof of Theorem \ref{th:d123}}\label{sec:4}

\noindent {\bf Sketch of Proof.}  The behavior of the  curvature lines  near a partially umbilic curve $\mathcal S$ will be described  from the
analysis of the lifted vector field
$X  = \fu X1 \partial/\partial u_1   + \fu X2\partial/\partial u_2  + \fu  X3\partial/\partial u_3   + \fu  X4\partial/\partial P $
given in equation  \eqref{coordLCVF}.

The projections of the integral curves of $X$   are  the curvature lines outside
 the umbilic and partially umbilic  points.  It will be shown that $X$  has
 lines of singularities contained in $\mathcal L$.
More precisely, in the $D_1$ case, $X$ has a line  of
singularities
 $\beta_1$ normally hyperbolic
such that the stable and unstable manifolds,
$ W^s_X(\beta_1)=\{p :\omega(p)=\beta_1\}$ and $ W^u_X(\beta_1)=\{p:\alpha(p)=\beta_1\} $
 are two dimensional smooth surfaces with $\partial W^s_X(\beta_1)=\partial W^u_X(\beta_1)=\beta_1$.

In the $D_2$ case, $X$ has three lines of singularities, $\beta_1$, $\beta_2$
and $ \beta_3$, normally hyperbolic and such that dim$W^u_X(\beta_1)=3$ (
repeller normally hyperbolic) and near $\beta_2$ and $\beta_3$ the behavior of
$X$ is as in $D_1$ case.

Finally, in  $D_3$ case, $X$ has  three lines of singularities and
near each line the behavior of $X$ is as in the $D_1$ case. See Figs. \ref{projD1},
\ref{projD2} and \ref{projD3}.

The partially umbilic separatrices are the diffeomorphic images  of the bi-dimensional invariant manifolds associated to the normally hyperbolic singularities of $X$.

\subsection{ Local analysis of the Lie-Cartan vector field $X  = \fu X1 \partial/\partial u_1   + \fu X2\partial/\partial u_2  + \fu  X3\partial/\partial u_3   + \fu  X4\partial/\partial P $ }\label{sub:41}

\noindent First  the singularities of $X$ will be determined.

In the Monge chart, see equation \eqref{eq:di}  and
 the
 Appendix \ref{sec:App},  \ref{ssec:firstP3} and \ref{ssec:secondP3} where the  defining restricted functions  
 in equation (\ref{eq:restr_LMN})  are displayed, 
 the vector field $X=X_{\mathcal L}$ given  in equation \eqref{coordLCVF} is written  as:

\begin{equation}\label{CLCcap1}
\begin{array}{ll}
X_1&=(-2bu_2-2q_{111}u_3 )P+(-a+b)u_1+cu_2+(-q_{201}+q_{021})u_3+h.o.t\\
X_2&=PX_1\\
X_3&=\left(-{\m\frac {q_{111}u_1+q_{021}u_2+q_{012}u_3}{k-k_3}}P-{\m\frac {q_{201}u_1+q_{111}u_2+q_{102}u_3}{k-k_3}}+h.o.t
\right)X_1\\
X_4&=A_3(u_1,u_2,u_3)P^3+A_2(u_1,u_2,u_3)P^2+A_1(u_1,u_2,u_3)P+A_0(u_1,u_2,u_3)
\end{array}
\end{equation}
where
{\small
\begin{align*}
A_3(u_1,u_2,u_3)&=b+\left(C-{k}^{3}+{\frac {q_{111}^{2}+q_{201}q_{021}}{k-k_3}} \right)u_1+ \left(D+3{\frac {q_{111}q_{021}}{k-k_3}} \right) u_2+\\
&+\left(Q_{121}+{\frac {2q_{111}q_{012}+q_{102}q_{021}}{k-k_3}}\right)u_3+h.o.t\\
A_2(u_1,u_2,u_3)&=-c+ \left(-D+2B+{\frac {6q_{111}q_{201}-3q_{111}q_{021}}{k-k_3}} \right)u_1+\\
&+ \left(-E+{k}^{3}+2C+{\frac {4q_{111}^{2}-3q_{021}^{2}+2q_{201}q_{021}}{k-k_3}} \right)u_2+ \\
&+\left(-Q_{031}+2{\frac {Q_{211}\left(2q_{201}q_{012}+4q_{102}q_{111}-3q_{012}q_{021} \right) }{k-k_3}} \right)u_3+h.o.t
\end{align*}
}

{\small
\begin{align*}
A_1(u_1,u_2,u_3)&=a-2b+ \left(-2C+A-{k}^{3}+{\frac {-2q_{201}q_{021}-4q_{111}^{2}+3q_{201}^{2}}{k-k_3}} \right)u_1+\\
&+\left(-2D+B+{\frac {3q_{111}q_{201}-6q_{111}q_{021}}{k-k_3}} \right)u_2+ \\
&+\left(-2Q_{121}+Q_{301}+{\frac {3q_{102}q_{201}-2q_{102}q_{021}-4q_{111}q_{012}}{k-k_3}} \right)u_3+h.o.t\\
A_0(u_1,u_2,u_3)&=\left(-B-3{\frac {q_{111}q_{201}}{k-k_3}} \right)u_1+ \left(-C+{k}^{3}-{\frac {2{q_{111}}^{2}+q_{201}q_{021}}{k-k_3}} \right)u_2+\\
&+ \left(-Q_{211}-{\frac {q_{201}q_{012}+2q_{102}q_{111}}{k-k_3}} \right)u_3+h.o.t
\end{align*}

 }
\begin{lemma}\label{lema1}
Let $\mathcal S$ be a Darbouxian partially umbilic curve.
\begin{itemize}
\item[i)] If  $\mathcal S$  is of type  $D_1$,  then  $X$ has a line of singularities $\gamma_1$.
\item[ii)] If $\mathcal S$ is of type $D_2$ or  $D_3$ then  $X$ has three lines of singularities $\gamma_i$, $(i=1,2,3)$.
 \end{itemize}

\end{lemma}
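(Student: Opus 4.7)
The plan is to exploit the triangular structure of $X$: since $X_2 = P X_1$ and $X_3 = (\mathcal U + \mathcal V P) X_1$ from \eqref{coordLCVF}, the system $X_1 = X_2 = X_3 = 0$ collapses to the single equation $X_1 = \mathcal L_P = 2 L_r P + M_r = 0$. By Remark \ref{remarkPU} the partially umbilic curve $\mathcal S = \{L_r = M_r = 0\}$ is precisely the locus where $\mathcal L_P$ vanishes identically in $P$; hence the entire cylinder $\mathcal S \times \mathbb R$ inside the Lie--Cartan space consists of points where $X_1 = X_2 = X_3 = 0$. Thus the singular set of $X$ is carved out of $\mathcal S \times \mathbb R$ by the single additional equation $X_4 = 0$, and each such singular point automatically lies on $\mathcal L$, since on $\mathcal S$ the equation $\mathcal L = 0$ reduces to $N_r = 0$, which is forced at singularities.

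Restricting $X_4 = A_3 P^3 + A_2 P^2 + A_1 P + A_0$ to the origin using the values $A_3(0) = b$, $A_2(0) = -c$, $A_1(0) = a - 2b$, $A_0(0) = 0$ read off from \eqref{CLCcap1}, one obtains
\[
X_4(0,0,0,P) \;=\; P\bigl(b P^2 - c P + (a - 2b)\bigr),
\]
whose real roots in $P$ are the singularities of $X$ sitting above the origin. The discriminant of the quadratic factor is $\Delta = c^2 - 4b(a-2b) = 4b^2\bigl((c/2b)^2 + 2 - a/b\bigr)$. In case $D_1$ the inequality $a/b > (c/2b)^2 + 2$ yields $\Delta < 0$, so $P = 0$ is the unique real root; in cases $D_2$ and $D_3$ the opposite strict inequality forces $\Delta > 0$, giving three distinct real roots. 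This produces the stated counts of one, respectively three, singular points above the origin.

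To upgrade these counts to honest \emph{lines} of singularities I will verify that every real root is simple. The root $P = 0$ has multiplicity one iff $a \neq 2b$, which is ensured in $D_1$ (where $a/b > 2$), is an explicit hypothesis in $D_2$, and follows in $D_3$ from $a/b < 1$; the two quadratic roots are simple iff $\Delta \neq 0$, which is excluded in each $D_i$ by the strict inequalities. Consequently $\partial X_4 / \partial P \neq 0$ at each singular point above the origin. Parametrizing $\mathcal S$ by a smooth curve $t \mapsto u(t)$ near the origin, the implicit function theorem then produces, for each simple root $P_i(0)$, a unique smooth branch $P = P_i(t)$ solving $X_4(u(t), P) = 0$; the curves $\gamma_i(t) = (u(t), P_i(t))$ are the required lines of singularities of $X$. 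The remark following \eqref{hipLieCartan} guarantees no extra singular branches hide at $P = \infty$.

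The main obstacle is purely computational rather than conceptual: confirming the values $A_j(0)$ and the clean factorization at the origin from the long Monge expansions in \eqref{CLCcap1}, and translating the sign of $\Delta$ against the three Darbouxian inequalities. Once these algebraic identities are in place, the root count and the implicit function extension are immediate, and the structure of the invariant manifolds of each $\gamma_i$ (to be used in the subsequent proof of Theorem \ref{th:d123}) will come from a separate normal hyperbolicity analysis at the level of the linear part of $X$ along $\gamma_i$.
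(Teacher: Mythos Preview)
Your proof is correct and follows essentially the same approach as the paper: reduce the singular set to $\{X_4=0\}$ over the parametrized partially umbilic curve $\mathcal S$, then count roots of the resulting cubic in $P$ at the origin via the discriminant and propagate them along $\mathcal S$ by the implicit function theorem. Your presentation is in fact slightly cleaner---you factor out the root $P=0$ and analyze the quadratic discriminant $\Delta = c^2-4b(a-2b)=4b^2\bigl((c/2b)^2+2-a/b\bigr)$ directly, whereas the paper computes the discriminant of the full cubic; you also make the simplicity of the roots (hence applicability of the implicit function theorem) explicit, which the paper leaves tacit.
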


\begin{proof}
The singular points of $X=X_{\mathcal L}$ are given by\\
\begin{equation*}
\left\{
\begin{array}{l}
\left\{\begin{array}{c}
L_r(u_1,u_2,u_3)=0,\\
M_r(u_1,u_2,u_3)=0,
\end{array}\right.
\textrm{ (Partially Umbilic Points, see remark \ref{remarkPU})}\\
\!\!\!A_3(u_1,u_2,u_3)P^3+A_2(u_1,u_2,u_3)P^2+A_1(u_1,u_2,u_3)P+A_0(u_1,u_2,u_3)=0
\end{array}
\right.
\end{equation*}

As $\det\left(\m\frac{\partial(L_r,M_r)}{\partial(u_1,u_2)}\Big|_{u_1=0=u_2=u_3}\right)=b(b-a)\neq 0$ we can write $u_1=u_1(u_3)$ and $u_2=u_2(u_3)$ in $L_r(u_1,u_2,u_3)=0$ and $M_r(u_1,u_2,u_3)=0$.

Let $C(u_3,P)=A_3(u_1(u_3),u_2(u_3),u_3)P^3+
A_2(u_1(u_3),u_2(u_3),u_3)P^2+A_1(u_1(u_3),u_2(u_3),u_3)P+A_0(u_1(u_3),u_2(u_3),u_3)$.
 The discriminant of
 $C(u_3,P) =0$
 is given by
\begin{equation}\label{discrd3}
D(u_3)=-\left(\frac{c^2}{4b^2}-\frac{a}{b}+2\right)\left(\frac{2a}{b}-4\right)^2+O(1),
\end{equation}
Therefore, for $u_3$ small  it follows that:
\begin{align*}
&\textrm{Condition }D_1)\;\; \Rightarrow\;\;  D(u_3)>0\Rightarrow
\textrm{There exists an unique solution $P_1(u_3)$}\\
&\textrm{of }C(u_3,P)=0\\
&\textrm{Conditions }D_2 \textrm{ and } D_3 )
\;\; \Rightarrow\;\;   D(u_3)<0\;\; \Rightarrow\;\;  \textrm{There exist exactly $3$ solutions}\\
&\textrm{of }C(u_3,P)=0\textrm{ that will be denoted by $P_1(u_3)$, $P_2(u_3)$ and $P_3(u_3)$}.
\end{align*}
As the equation $C(0,P)=0$ has the solutions:
 
\begin{align*}P_1(0)=0;P_2(0)=\frac{c}{2b}+\sqrt{\frac{c^2}{4b^2}-\frac{a}{b}+2},
P_3(0)=\frac{c}{2b}-\sqrt{\frac{c^2}{4b^2}-\frac{a}{b}+2},
\end{align*}
it follows that:
\begin{itemize}
\item[$\bullet$] If  $\mathcal S$ is of type  $D_1$ then there is the unique
 curve of singularities of $X$. The intersection of this curve with the axis  $P$ is the point $P_1(0)$,

\item[$\bullet$] If $\mathcal S$ is of type $D_2$ or $D_3$ then $X$ has   exactly
 $3$ curves of singularities
and these curves cross the axis  $P$ at $P_1(0)$, $P_2(0)$ and  $P_3(0)$.
\end{itemize}

For $i=1,2,3$, denote the curve of singularities of   $X$,
intersecting the axis  $P$ at $P=P_i(0)$ by $\gamma_i(u_3)$.

\end{proof}

\begin{lemma}\label{lema2}
Let $X=X_{\mathcal{L}}$ be the  Lie-Cartan vector field restricted to
the hypersurface
$\mathcal{L}(u_1,u_2,u_3,P)=0.$

 Then:

\begin{itemize}
 \item[i)] {Condition } $D_1$ \;$\;\; \Rightarrow\;\;   \;\;\gamma_1$ {is normally hyperbolic of saddle type }
of $X_{\mathcal{L}}$,

\item[ii)] {Condition} $D_2$ $ \;\;\Rightarrow\;\;  $ \;\;  {for $i=1,2,3$, $\gamma_i$  is normally
 hyperbolic of $X_{\mathcal{L}}$ and satisfy: one is attractor(or repeller) and the other two are of saddle type},

\item[iii)] {Condition} $D_3$ $ \;\; \Rightarrow\;\; $  \;\; for $i=1,2,3$, $\gamma_i$  { is normally hyperbolic of saddle
 type of } $X_{\mathcal{L}}.$

\end{itemize}

\end{lemma}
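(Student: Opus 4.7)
The plan is to linearize $X$ at each singular point $\gamma_i(0)=(0,0,0,P_i(0))$, exploit the multiplicative structure of $X$ to reduce the question to a $2\times 2$ eigenvalue problem, and read off the sign pattern of the two non-zero eigenvalues from the discriminant conditions.

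From \eqref{coordLCVF} one has $X_2=PX_1$ and $X_3=(\mathcal U+\mathcal V P)X_1$, so at any singularity, where $X_1=0$, the gradients $\nabla X_2$ and $\nabla X_3$ are proportional to $\nabla X_1$; since moreover $\mathcal U(0)=\mathcal V(0)=0$ and $\partial_P X_1(\gamma_i(0))=0$ (visible in \eqref{CLCcap1}, where the coefficient of $P$ in $X_1$ vanishes at the origin), the $4\times 4$ Jacobian $DX(\gamma_i(0))$ has rank at most $2$ and therefore carries at least two zero eigenvalues. These two kernel directions are identified geometrically: one is tangent to $\gamma_i$ itself (along which $X$ vanishes identically), and the other is the conormal to the Lie--Cartan hypersurface $\mathcal L=0$, which lies in $\ker DX(\gamma_i(0))$ because $X$ is globally tangent to $\mathcal L=0$, as one checks directly from $d\mathcal L(X)\equiv 0$ using the definitions of $X_1,\dots,X_4$. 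The remaining two eigenvalues $\lambda_1,\lambda_2$ come from a $2\times 2$ quotient block built out of $\nabla X_1$ and $\nabla X_4$ modulo these two kernel directions.

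Using the expansions in \eqref{CLCcap1}, the defining relations $L_r(0)=M_r(0)=0$, and the cubic relation $bP_i^3-cP_i^2+(a-2b)P_i=0$ satisfied by each $P_i(0)$ (obtained from $C(0,P)=bP^3-cP^2+(a-2b)P$), I would compute the trace and determinant of this $2\times 2$ block as explicit polynomials in $a,b,c,P_i$. Normal hyperbolicity is then the non-vanishing of $\lambda_1\lambda_2$; saddle type corresponds to $\lambda_1\lambda_2<0$, while node type (attractor or repeller) corresponds to $\lambda_1\lambda_2>0$, with the sign of $\lambda_1+\lambda_2$ selecting attractor versus repeller.

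The classification then reduces to a sign analysis of these polynomials under the hypotheses $D_1$, $D_2$, $D_3$. In $D_1$ there is a unique real root $P_1(0)=0$ with $a-2b>0$ (since $a/b>(c/2b)^2+2\ge 2$), and a direct computation shows $\lambda_1\lambda_2<0$, giving the single saddle $\gamma_1$. In $D_2$ and $D_3$ all three $P_i(0)$ are real: in $D_3$ ($a/b<1$) every determinant is negative and all three $\gamma_i$ are saddles, while in $D_2$ ($1<a/b<(c/2b)^2+2$, $a\ne 2b$) the determinants alternate in sign so that exactly one $\gamma_i$ has $\lambda_1\lambda_2>0$ and becomes the attractor or repeller, the other two remaining saddles. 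The main obstacle is this last combinatorial step: keeping the sign bookkeeping transparent across the three roots of the cubic, and verifying that at the distinguished $\gamma_i$ in the $D_2$ case the trace does not vanish, so that attractor versus repeller is unambiguous.
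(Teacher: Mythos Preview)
Your plan is essentially the paper's approach, but there is one technical slip and one missed simplification.

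The slip: from $d\mathcal L(X)\equiv 0$ you cannot conclude that $\nabla\mathcal L$ lies in $\ker DX(\gamma_i(0))$. Differentiating this identity at a singularity yields $\nabla\mathcal L^{T}\,DX(\gamma_i(0))=0$, so $\nabla\mathcal L$ is a \emph{left} null vector (cokernel), not a right one. This does not break your argument, since the bound $\mathrm{rank}\,DX\le 2$ already follows from rows $2$ and $3$ being scalar multiples of row $1$, and that alone forces two zero eigenvalues.

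The simplification you overlook: the very fact you noted, $\partial_P X_1(\gamma_i(0))=0$, makes your $2\times 2$ quotient block \emph{lower triangular} at $u_3=0$, so its eigenvalues are simply the diagonal entries
\[
\lambda_4=\frac{\partial X_1}{\partial u_1}+P_i\,\frac{\partial X_1}{\partial u_2}\Big|_{0},\qquad
\lambda_3=\frac{\partial X_4}{\partial P}\Big|_{P=P_i(0)}.
\]
This is exactly what the paper does (its characteristic polynomial \eqref{PolCarac} is just this triangularity). Substituting from \eqref{CLCcap1} and using $bP_i^2-cP_i+(a-2b)=0$ for $i=2,3$ gives closed forms: $\lambda_3=a-2b$, $\lambda_4=b-a$ at $\gamma_1$, and $\lambda_3=bP_i^2+2b-a$, $\lambda_4=-b(P_i^2+1)$ at $\gamma_2,\gamma_3$. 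With these in hand the sign analysis under $D_1,D_2,D_3$ is immediate (e.g.\ $\lambda_4=-b(P_i^2+1)$ has fixed sign, and $\lambda_3>0$ at $\gamma_2,\gamma_3$ whenever $a<2b$), so your ``main obstacle'' dissolves without any trace/determinant bookkeeping.
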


 \begin{proof}
Let $X$ be the Lie-Cartan vector field.
The
 linearisation of $X$ near the singularities will be analyzed
 below.

   Let
\begin{equation}\label{Betaiparametrizada}
\gamma_i(u_3)=(c_1(u_3),c_2(u_3),u_3,P_i(u_3)),\quad i=1,2,3,
\end{equation}
the curves of singularities given in lemma \ref{lema1}.

The characteristic polynomial of  $DX(\gamma_i(u_3))$ is
\begin{equation}\label{PolCarac}
p(\lambda)=\lambda^2\cdot\left(\frac{\partial X_4}{\partial P}-
\lambda\right)\cdot\left(\lambda-\frac{\partial X_3}{\partial u_3}-
\frac{\partial X_2}{\partial u_2}-\frac{\partial X_1}{\partial u_1}\right)
\end{equation}
\begin{itemize}
\item Condition  $D_1(\frac{a}{b}>\left(\frac{c}{2b}\right)^2+2)$:
\end{itemize}
Suppose that $b>0$. The case $b<0$ is similar. From equations (\ref{PolCarac}) and (\ref{CLCcap1}) it follows that the
 eigenvalues of $DX(\gamma_1(u_3))=DX(c_1(u_3),c_2(u_3),u_3,P_1(u_3))$
are $\lambda_1(u_3)\equiv\lambda_2(u_3)=0$ and
\begin{equation}\label{autovaloresD1}
\begin{array}{ll}
\lambda_3(u_3)&=\m\frac{\partial X_4}{\partial P}=-2b+a+O(u_3),\\
&\\
\lambda_4(u_3)&=\m\frac{\partial X_1}{\partial u_1}+
\m\frac{\partial X_2}{\partial u_2}+\m\frac{\partial X_3}{\partial u_3}=b-a+O(u_3),
\end{array}
\end{equation}

For $u_3$ small, $\lambda_3>0$ and $\lambda_4<0$.
\begin{itemize}
\item Condition $D_2$ ($1<\frac{a}{b}<\left(\frac{c}{2b}\right)^2+2$):
\end{itemize}
In this case, $DX(\gamma_1(u_3))$  has eigenvalues
 $\lambda_1=\lambda_2\equiv0$ and $\lambda_3$, $\lambda_4$ defined by equation (\ref{autovaloresD1}). In this case, for  $u_3$ small it follows that:
\begin{equation}\label{lambda1D2}
\lambda_4(u_3)<0\textrm{ and}
\left\{\begin{array}{c}
\lambda_3(u_3)>0\textrm{ if } a>2b\\
\lambda_3(u_3)<0\textrm{ if } a<2b
\end{array}\right.
\end{equation}

Suppose that  $a<2b$ and $b>0$. The other cases can be considered similarly. For $i=2,3$ the eigenvalues $\lambda_1$, $\lambda_2$, $\lambda_3$ and $\lambda_4$
of $DX(\gamma_i(u_3))=DX(c_1(u_3),c_2(u_3),u_3,P_i(u_3))$ satisfy:
 $\lambda_1=\lambda_2\equiv 0$, $\lambda_3>0$ and $\lambda_4<0$ since for
equations (\ref{PolCarac})  and (\ref{CLCcap1}) it follows that:

\begin{equation}\label{lambda3e4D2}
\lambda_3(u_3)=bP_i(0)^2+2b-a+O(1)>0, \lambda_4(u_3)=-bP_i(0)^2-b+O(1)<0
\end{equation}

For $a<2b$ it follows that $P_2(u_3)<0<P_3(u_3)$, for  $u_3$ small.

\begin{itemize}
\item Condition $D_3$ ($\m\frac{a}{b}<1$):
\end{itemize}
The eigenvalues of  $DX(\gamma_i(u_3))$, for  $i=1,2,3$, satisfy:
\begin{equation}\label{autovaloresD3}
\lambda_1(u_3)=\lambda_2(u_3)\equiv 0\textrm{ and }\lambda_3(u_3)\cdot\lambda_4(u_3)<0
\end{equation}
since $\m\frac{a}{b}<1\Rightarrow\m\frac{a}{b}<2$, see (\ref{lambda1D2}) and (\ref{lambda3e4D2}).

In the Monge chart we have that

\begin{align*}
\frac{\partial\mathcal{L}}{\partial u_2}(0,0,0,P_i(0))&=\frac{\partial L_r}{\partial u_2}(0,0,0)(P_i(0))^2+\frac{\partial M_r}{\partial u_2}(0,0,0)P_i(0)+\frac{\partial N_r}{\partial u_2}(0,0,0)\\
&=-b(P_i(0))^2+cP_i(0)+b=\left\{
\begin{array}{ll}
b, &\textrm{ if } i=1,\\
a-b, &\textrm{ if } i=2,3.
\end{array}\right.
\end{align*}

Therefore in a neighborhood of $(0,0,0,P_i(0))$ ($i=1,2,3$),  we can write $u_2=u_2(u_1,u_3,P)$ in the equation $\mathcal{L}(u_1,u_2,u_3,P)=0$. In the chart $(u_1,u_3,P)$ the  Lie-Cartan vector field is given by
\begin{equation}\label{CLCRcap1}
X_{\mathcal{L}}=\left\{\begin{array}{ll}
\dot u_1&=X_1(u_1,u_2(u_1,u_3,P),u_3,P)\\
\dot u_3&=X_3(u_1,u_2(u_1,u_3,P),u_3,P)\\
\dot P&=X_4(u_1,u_2(u_1,u_3,P),u_3,P)
\end{array}\right.
\end{equation}

The linearisation of  $X_{\mathcal{L}}$ in $(0,0,0,P_i(0))$ has one eigenvalue equal to zero and the other two are non zero
and satisfy the same conditions of  $\lambda_3$ and $\lambda_4$, given in equation
  (\ref{autovaloresD1}), (\ref{lambda1D2}), (\ref{lambda3e4D2}) and (\ref{autovaloresD3}).
The eigenvector associated to $(0,0,0,P_i(0))$ is tangent to the curve of singularities.
Therefore  it follows that:
\begin{align*}
\textrm{Condition } D_1)\; \Rightarrow\;\;  & \gamma_1(u_3) \textrm{ is normally hyperbolic of saddle type,}\\
\textrm{Condition } D_2)\; \Rightarrow\;\; & \textrm{if }a<2b\textrm{ and }b>0\textrm{
it follows that for $u_3$ sufficiently small}\\
&\begin{array}{l}
\!\!\!\textrm{the curve }\gamma_1(u_3)\textrm{ is normally hyperbolic of attracting type }\\
\!\!\!\textrm{the curves }\gamma_2(u_3)\textrm{ and }\gamma_3(u_3)\textrm{ are normally hyperbolic of }\\
\!\!\!\textrm{saddle type. The other cases are similar. }
\end{array}\\
\textrm{Condition } D_3)\; \Rightarrow\;\;  & \gamma_i(u_3)(i=1,2,3)\textrm{
are normally hyperbolic of saddle type}.
\end{align*}
\end{proof}

\subsection{End of proof of Theorem \ref{th:d123} }

Let $\mathcal S$ be a Darbouxian partially umbilic curve. Suppose  that $\mathcal S$ is of type $D_1$.
By lemma \ref{lema1} there exists a
 unique curve of singularities,  $\gamma_1(u_3)$ (see (\ref{Betaiparametrizada})), of the Lie-Cartan vector field. By lemma \ref{lema2}, $\gamma_1(u_3)$ is normally hyperbolic of saddle type.

By Invariant Manifold Theory (see \cite[page 44]{hps} and \cite{fe})   in a neighborhood $V_{\gamma_1}$ of $\gamma_1(u_3)$ there are two dimensional invariant manifolds  $W^{s}_{\gamma_i}$ and $W^{u}_{\gamma_1}$, of class $C^{k-3}$, with
 $W^{u}_{\gamma_1}\cap W^{s}_{\gamma_1}=\gamma_1$.

\textit{Claim:} $\Pi(W^{u}_{\gamma_1})=\mathcal S$, where $\Pi(u_1,u_2,u_3,P)=(u_1,u_2,u_3)$. In fact,
\begin{itemize}
\item the axis $P$ is  invariant by $X$
\item $(0,0,0,1)$ is the eigenvector associated to the eigenvalue
 $\m\frac{\partial X_4}{\partial P}(\gamma_1(u_3))$ which is positive for $u_3$ sufficiently small, see  equation
 (\ref{autovaloresD1}).
\end{itemize}
By
the
 uniqueness   of the invariant manifolds, it follows that in a neighborhood of
 $\gamma_1$, $W^u_{\gamma_1}=\mathcal{S}\times(\textrm{axis }P)$, and therefore $\Pi(W^{u}_{\gamma_1})=\mathcal S$.
Let
$$
V_{\mathcal{S}}=\Pi(V_{\gamma_1})\textrm{ and  }W(\mathcal{S})=\Pi(W^{s}_{\gamma_1}).
$$

Therefore it follows that: if  $\mathcal S$  is of type $D_1$ then
\begin{itemize}
\item there exists a  unique umbilic separatrix surface, $W({\mathcal S})$, of class $C^{k-3}$,
\item which  is fibred over $\mathcal S$ and the fibers are the  leaves of  $\mathcal{F}_{1}(\alpha)$,
\item there exists a tubular neighborhood  $V_{\mathcal{S}}$ of  $\mathcal S$  such that the set $V_{\mathcal{S}}\setminus W({\mathcal S})
$ is a hyperbolic
sector of   $\mathcal{F}_{1}(\alpha)$.
\end{itemize}

See Fig. \ref{projD1}.

\begin{figure}[h]
\psfrag{pi}{$\Pi$}
\psfrag{pws1}{$W_1({\mathcal S)}= \Pi(W^s_{\gamma_1} )$}
\psfrag{ws1}{$ W^s_{\gamma_1} $}
\psfrag{beta1}{$\gamma_1$}
\psfrag{wu1}{$  W^u_{\gamma_1} $}
\psfrag{s}{$\mathcal S$}
\begin{center}
    \def\svgwidth{0.7\textwidth}
     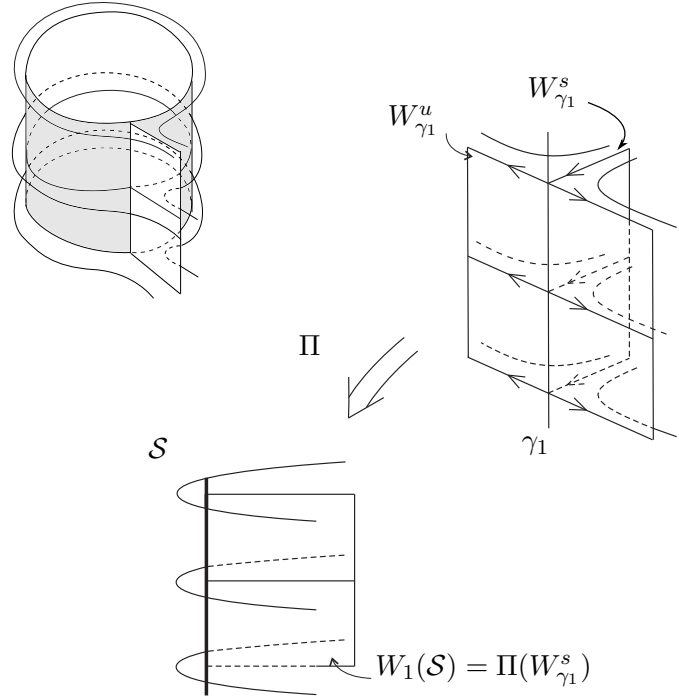
    \caption{Lie-Cartan resolution of a partially umbilic point $D_1$.}
  \label{projD1}
    \end{center}
\end{figure}

Suppose  that $\mathcal S$ is of type $D_2$. By lemma \ref{lema1}, there exist
three curves of singularities $\gamma_1(u_3)$, $\gamma_2(u_3)$ and $\gamma_3(u_3)$ of the Lie-Cartan vector field. By lemma \ref{lema2}, $\gamma_1(u_3)$ is normally hyperbolic (attractor);  $\gamma_2(u_3)$ and
 $\gamma_3(u_3)$ are normally hyperbolic of saddle type.

By Invariant Manifold Theory  (see \cite[page 44]{hps} and \cite{fe}), it follows that
\begin{itemize}
\item For  $i=2,3$, there are  bi-dimensional invariant manifolds    $W^{s}_{\gamma_i}$ (stable manifold) and
 $W^{u}_{\gamma_i}$ (unstable manifold), of class $C^{k-3}$, and $W^{u}_{\gamma_i}\cap W^{s}_{\gamma_i}=\gamma_i$.
\item For  $\gamma_1$, $W^{u}_{\gamma_i}=\emptyset$
\end{itemize}
Observe that $\Pi(W^{s}_{\gamma_1})=\Pi(W^{u}_{\gamma_i})=\mathcal{S}$, $i=2,3$. Let
$W_{1}=\Pi(W^{s}_{\gamma_2})\textrm{ and  } W_{2}=\Pi(W^{s}_{\gamma_3})$.
Then,
\begin{itemize}
\item there are two invariant manifolds
 (partially umbilic separatrix surfaces),
$W_{1}$ e $W_{2}$, both of class $C^{k-3}$,
\item there exists exactly one hyperbolic sector and one wedge  sector for
of the principal foliation $\mathcal{F}_{1}(\alpha)$.
\end{itemize}

See Fig. \ref{projD2}.

\begin{figure}[h]
%\psfrag{pi}{$\Pi$}
%\psfrag{w1}{$W_1({\mathcal S)}= \Pi(W^s_{\gamma_1} )$}
%\psfrag{w2}{$W_2({\mathcal S)}= \Pi(W^s_{\gamma_2} )$}
%\psfrag{ws1}{$ W^s_{\gamma_1} $}
%\psfrag{ws2}{$ W^s_{\gamma_2} $}
%\psfrag{ws3}{$ W^s_{\gamma_3} $}
%\psfrag{beta1}{$\gamma_1$}
%\psfrag{beta2}{$\gamma_2$}
%\psfrag{beta3}{$\gamma_3$}
%\psfrag{wu1}{$  W^u_{\gamma_1} $}
%\psfrag{s}{$\mathcal S$}
\begin{center}
   \def\svgwidth{0.9\textwidth}
      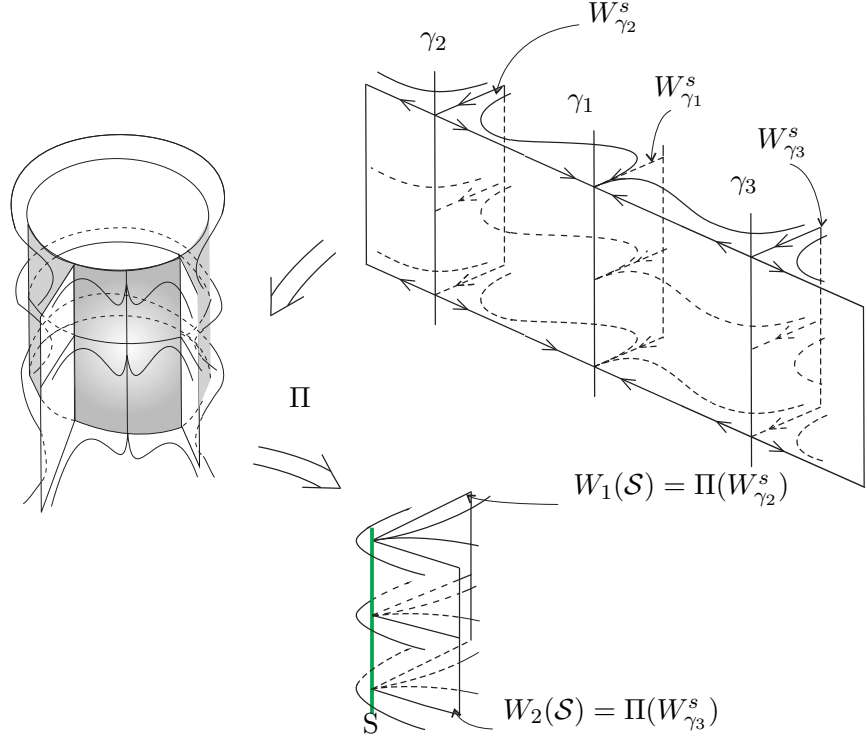
     \caption{Lie-Cartan resolution of $D_2$ with $\frac{a}{b}<2$.}
   \label{projD2}
    \end{center}
\end{figure}

Suppose that $\mathcal S$  is  of type $D_3$. By lemma \ref{lema1}, there are three curves of singularities
 $\gamma_1(u_3)$, $\gamma_2(u_3)$ and  $\gamma_3(u_3)$ for the Lie-Cartan vector field. By lemma \ref{lema2}, the curves $\gamma_1$, $\gamma_2$ and $\gamma_3$ are normally hyperbolic of saddle type. By Invariant Manifold Theory  (see \cite[page 44]{hps} and \cite{fe}), for  $i=1,2,3$, there are bi-dimensional invariant manifolds $W^{s}_{\gamma_i}$ and  $W^{u}_{\gamma_i}$, of class  $C^{k-3}$, with
 $W^{u}_{\gamma_i}\cap W^{s}_{\gamma_i}=\gamma_i$.
Moreover, $\Pi(W^{s}_{\gamma_1})=\Pi(W^{u}_{\gamma_i})=\mathcal{S},i=2,3$. Let
$W_{1}=\Pi(W^{u}_{\gamma_1}), W_{2}=\Pi(W^{s}_{\gamma_2})\textrm{ and }W_{3}=\Pi(W^{s}_{\gamma_3}),
$
Then,
\begin{itemize}
\item there are three invariant surfaces (partially umbilic separatrix surfaces)
$W_{1}$, $W_{2}$ e $W_{3}$,
all of class  $C^{k-3}$,
\item there are exactly three hyperbolic sectors of the principal foliation $\mathcal{F}_{1}(\alpha)$.
\end{itemize}

See Fig. \ref{projD3}.

\begin{figure}[h]
%\psfrag{pi}{$\Pi$}
%\psfrag{w1}{$W_1({\mathcal S)}= \Pi(W^s_{\gamma_1} )$}
%\psfrag{w2}{$W_2({\mathcal S)}= \Pi(W^s_{\gamma_2} )$}
%\psfrag{w3}{$W_3({\mathcal S)}= \Pi(W^s_{\gamma_3} )$}
%\psfrag{ws1}{$ W^s_{\gamma_1} $}
%\psfrag{ws2}{$ W^s_{\gamma_2} $}
%\psfrag{ws3}{$ W^s_{\gamma_3} $}
%\psfrag{beta1}{$\gamma_1$}
%\psfrag{beta2}{$\gamma_2$}
%\psfrag{beta3}{$\gamma_3$}
%\psfrag{wu1}{$  W^u_{\gamma_1} $}
%\psfrag{s}{$\mathcal S$}
%
\begin{center}
   \def\svgwidth{0.9\textwidth}
    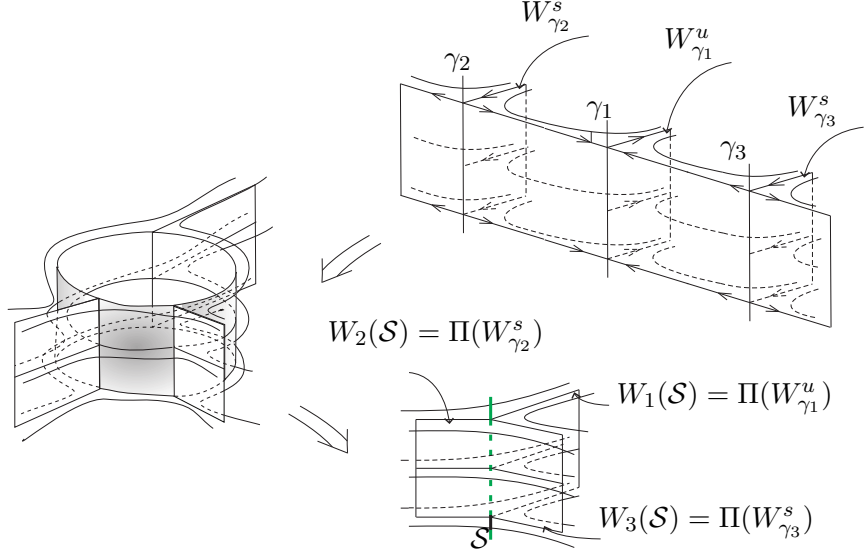
    \caption{Lie-Cartan resolution of a partially umbilic point of type  $D_3$.}
   \label{projD3}
    \end{center}
\end{figure}

 %SS NA FIG ACIMA ,11,  O ANGULO ENTRE AS SUPERFICIES  $W_1 ...$   E   $W_ ...$   DEVIA SER MAIOR QUE 90 GRAUS . FICA PARA A PROXIMA. ESTE ACERTO??

\section{Principal foliations near a partially umbilic point  $D_{12}$}\label{D12}

\begin{lemma}\label{equilD12}
Let $\mathcal S$ be the partially umbilic set. If $p\in\mathcal S$ is of type $D_{12}$, then the Lie-Cartan vector field $X=X_{\mathcal L}$ has two lines of singularities $\zeta_1$ and $\zeta_2$.
\end{lemma}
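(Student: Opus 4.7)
The plan is to reduce the singularity count of $X$ to the count of real zeros of the cubic polynomial $C(u_3,P)$ from the proof of Lemma~\ref{lema1}, restricted along the partially umbilic curve, and then to exploit the $D_{12}$ degeneracy. Since the $D_{12}$ hypothesis $a=2b$, $b\neq 0$ still yields $b(b-a)=-b^2\neq 0$, the same Jacobian argument used in Lemma~\ref{lema1} parametrizes $\mathcal S$ locally as $\{(c_1(u_3),c_2(u_3),u_3)\}$ with $c_1(0)=c_2(0)=0$. Substituting into the formulas of Section~\ref{sec:4} gives $A_3(0)=b$, $A_2(0)=-c$, $A_1(0)=a-2b=0$, $A_0(0)=0$, so
\[
C(0,P)=b\,P^3-c\,P^2=P^2(bP-c).
\]
Hence $C(0,\cdot)$ has a simple root at $P=c/b\neq 0$ (recall $c\neq 0$ by Definition~\ref{defD12}) and a double root at $P=0$.

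The two roots are handled separately. For the simple root $P=c/b$, one computes $\partial_P C(0,c/b)=3b(c/b)^2-2c(c/b)=c^2/b\neq 0$, so the implicit function theorem produces a unique smooth solution $P=P_2(u_3)$ with $P_2(0)=c/b$; lifting through $u_i=c_i(u_3)$ yields the first smooth line of singularities $\zeta_2$, transverse to the hyperplane $\{u_3=0\}$. For the double root at $P=0$, the plan is to compute the $2$-jet of $C$ at the origin of the $(u_3,P)$-plane,
\[
C(u_3,P)=-c\,P^2+\alpha_0\,u_3+\alpha_1\,u_3 P+O(u_3^2,u_3P^2,P^3),
\]
where $\alpha_0=\dfrac{d}{du_3}A_0(c_1(u_3),c_2(u_3),u_3)\big|_{u_3=0}$. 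Provided $\alpha_0\neq 0$, the equation $C=0$ can be solved for $u_3$ as a smooth function of $P$, giving the parabolic zero set $u_3=(c/\alpha_0)P^2+O(P^3)$, a single smooth curve through the origin, tangent to the $P$-axis. Lifting this curve through $u_i=c_i(u_3(P))$ produces the second line of singularities $\zeta_1$, parametrized by $P$. Since the remaining root of $C(u_3,\cdot)$ is complex on one side of $u_3=0$ and is the second branch of $\zeta_1$ on the other, $X$ has exactly these two lines of singularities near $p$.

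The main obstacle is algebraic: identifying $\alpha_0\neq 0$ with the transversality condition $\chi_{12}\neq 0$ of Definition~\ref{defD12}. The plan is first to solve the linear system
\[
\partial_{u_1}L_r\cdot c_1'(0)+\partial_{u_2}L_r\cdot c_2'(0)+\partial_{u_3}L_r=0,\qquad \partial_{u_1}M_r\cdot c_1'(0)+\partial_{u_2}M_r\cdot c_2'(0)+\partial_{u_3}M_r=0
\]
at the origin (whose coefficient matrix has determinant $-b^2\neq 0$) for $c_1'(0),c_2'(0)$, and then substitute into
\[
\alpha_0=\partial_{u_1}A_0(0)\cdot c_1'(0)+\partial_{u_2}A_0(0)\cdot c_2'(0)+\partial_{u_3}A_0(0),
\]
using the Monge-chart expansions of $L_r$, $M_r$ and $A_0$ collected in the Appendix. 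A direct expansion is expected to show that $\alpha_0$ equals a nonzero constant multiple of $\chi_{12}$ (up to a power of $k-k_3$), which is exactly the long, $4$-jet expression in equation~\eqref{chi2}; this matching is the computational heart of the proof and the reason the invariant $\chi_{12}$ takes the specific form stated in Definition~\ref{defD12}.
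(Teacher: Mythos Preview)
Your argument is correct and follows essentially the same route as the paper's own proof: parametrize $\mathcal S$ by $u_3$ via the Jacobian $\det\partial(L_r,M_r)/\partial(u_1,u_2)=-b^2\neq 0$, reduce to the cubic $C(u_3,P)$, and then apply the implicit function theorem twice---once at $(0,c/b)$ using $\partial_P C(0,c/b)=c^2/b\neq 0$, and once at $(0,0)$ using $\partial_{u_3}C(0,0)\neq 0$ together with $\partial_P C(0,0)=0$, $\partial_P^2 C(0,0)=-2c\neq 0$. Two minor remarks: your labels $\zeta_1,\zeta_2$ are swapped relative to the paper (the paper calls the curve through $P=c/b$ by $\zeta_1$ and the tangent one by $\zeta_2$), and what you call $\alpha_0$ the paper simply asserts to equal $\chi_{12}$ itself (no extra proportionality constant), so your planned computation of $\alpha_0$ should recover exactly the expression in equation~\eqref{chi2}.
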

\begin{proof}
The singular points of $X$ are given by
\begin{equation}\label{sistequlXcapD12}
\left\{
\begin{array}{l}
\left\{\begin{array}{c}
L_r(u_1,u_2,u_3)=0,\\
M_r(u_1,u_2,u_3)=0,
\end{array}\right.
\textrm{ (Partially Umbilic Points)}\\
\!\!\!A_3P^3+A_2P^2+A_1P+A_0=0\quad\textrm{ (see equation (\ref{CLCcap1}))}
\end{array}
\right.
\end{equation}
As $\det\left(\m\frac{\partial(L_r,M_r)}{\partial(u_1,u_2)}\Big|_{u_1=0=u_2=u_3}\right)=-b^2\neq0$ we can write $u_1=u_1(u_3)$ and $u_2=u_2(u_3)$
to solve
$L_r(u_1,u_2,u_3)=0$ and $M_r(u_1,u_2,u_3)=0$.
In the Monge chart,
 $\mathcal{S}$ is parametrized by

\begin{equation}\label{cpulemaD12}
\begin{array}{ll}
u_1=c_1(u_3)&={\m\frac {bq_{021}-q_{201}b-q_{111}c}{{b}^{2}}}u_3+O \left(2 \right),
\\
&\\
u_2=c_2(u_3)&=-\m\frac{q_{111}}{b}u_3+O\left(2 \right),
\end{array}
\end{equation}
Let
$
C(u_3,P)=A_3(c_1(u_3),c_2(u_3),u_3)P^3+A_2(c_1(u_3),c_2(u_3),u_3)P^2+$\\ $A_1(c_1(u_3),c_2(u_3),u_3)P+A_0(c_1(u_3),c_2(u_3),u_3)=X_4(c_1(u_3),c_2(u_3),u_3,P).
$
Direct calculation shows that
\begin{equation}\label{derXcpu1}
\frac{\partial C(u_3,P)}{\partial P}\Bigg|_{u_3=0,P=\frac{c}{b}}=\frac{c^2}{b}\neq0;\quad \frac{\partial C(u_3,P)}{\partial u_3}\Bigg|_{u_3=0,P=0}=\chi_{12}\neq0
\end{equation}
and
\begin{equation}\label{derXcpu2}
\quad \frac{\partial C(u_3,P)}{\partial P}\Bigg|_{u_3=0,P=0}=0,\quad\frac{\partial^2 C(u_3,P)}{\partial P^2}\Bigg|_{u_3=0,P=0}=-2c\neq0,
\end{equation}

So, by equations (\ref{derXcpu1}) and (\ref{derXcpu2}), there exists an
 unique curve of singularities $\zeta_1$, transversal
to the
 $P$ axis, passing through $u_3=0$, $P=c/b$; and there exists
 a unique curve, $\zeta_2$  tangent to the point $P$  axis   at the  origin.
\end{proof}

\begin{remark} \label{chi12}
The discriminant of $C(u_3,P)=0$ satisfies
\begin{equation}\label{discrdd}
D(u_3)=\chi_{12}u_3+O(2),
\end{equation}
where $\chi_{12}$ is given by equation \ref{chi2}.

\end{remark}

In order to obtain the configuration shown in Fig. \ref{D12S},
is sufficient to show that $\zeta_1$ is normally hyperbolic of saddle type and that
 $\zeta_2$ is of saddle-node type (non-hyperbolic)

\begin{lemma}\label{lemaRFD12} Let $X_{{\mathcal  L}}$  be the  Lie-Cartan vector field tangent to the Lie-Cartan hypersurface, and  $\zeta_1$, $\zeta_2$ the curves of singularities established in  lemma \ref{equilD12}. Then,
\begin{itemize}
\item[i)] $\zeta_1$ is a curve of singularities normally hyperbolic of saddle type;
\item[ii)] there exists a two dimensional center manifold containing $\zeta_2$, and the phase portrait of  $X_{{\mathcal L}}$ in a neighborhood of $\zeta_2$ is as shown in Fig. \ref{RetratoFaseSND12}.
\end{itemize}
\end{lemma}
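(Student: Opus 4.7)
The plan is to work with the reduced three-dimensional system obtained by solving $\mathcal{L}(u_1,u_2,u_3,P) = 0$ for $u_2 = u_2(u_1,u_3,P)$, as in equation \eqref{CLCRcap1}. This reduction is legitimate at both relevant singularities because, with $a = 2b$ in the $D_{12}$ chart, one has $\partial\mathcal{L}/\partial u_2(0,0,0,P) = -bP^2 + cP + b$, which evaluates to $b \neq 0$ both at $P = 0$ and at $P = c/b$. After the reduction, $\zeta_1$ and $\zeta_2$ are smooth one-dimensional curves of equilibria, and the factored polynomial \eqref{PolCarac}, with the tangential factor $\lambda$ absorbed by the reduction, gives along each curve one zero eigenvalue tangent to the curve plus two normal eigenvalues
\[
\lambda_3 = \frac{\partial X_4}{\partial P}, \qquad \lambda_4 = \frac{\partial X_1}{\partial u_1} + \frac{\partial X_3}{\partial u_3}.
\]

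For part (i), I would substitute $a = 2b$ into the expansions of $A_0, A_1, A_2, A_3$ in \eqref{CLCcap1} and evaluate at $(0,0,0, c/b)$. Mirroring the calculation in \eqref{lambda3e4D2} from the $D_2$ analysis yields
\[
\lambda_3(0) = \frac{c^2}{b}, \qquad \lambda_4(0) = -\frac{c^2}{b} - b,
\]
so $\lambda_3(0)\,\lambda_4(0) = -c^4/b^2 - c^2 < 0$, because the $D_{12}$ hypothesis $bc(b-a) \neq 0$ forces $c \neq 0$. Both normal eigenvalues are therefore nonzero with opposite signs at the base point, and by persistence they keep their signs along the whole arc $\zeta_1$ near the origin, establishing that $\zeta_1$ is normally hyperbolic of saddle type.

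For part (ii), the same formulas evaluated at $(0,0,0,0) \in \zeta_2$ with $a = 2b$ give $\lambda_3(0) = -2b + a = 0$ and $\lambda_4(0) = b - a = -b \neq 0$, consistent with $\partial C/\partial P(0,0) = 0$ recorded in \eqref{derXcpu2}. The reduced three-dimensional system at the origin thus has linear spectrum $\{0,0,-b\}$ with a two-dimensional center subspace. The Center Manifold Theorem produces a $C^{k-3}$ two-dimensional local center manifold $W^c$ through the origin, and $\zeta_2$ is contained in $W^c$ near the origin because every local curve of equilibria lies in any local center manifold. Restricting the flow to $W^c$ in coordinates $(u_3, P)$, so that the normal direction to the curve of equilibria is controlled by $C(u_3, P) = X_4(c_1(u_3), c_2(u_3), u_3, P)$, the conditions
\[
\frac{\partial C}{\partial u_3}(0,0) = \chi_{12} \neq 0, \qquad \frac{\partial^2 C}{\partial P^2}(0,0) = -2c \neq 0,
\]
from \eqref{derXcpu1}--\eqref{derXcpu2} are exactly the transversal unfolding and quadratic nondegeneracy hypotheses of the classical planar saddle-node normal form. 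Hence the flow on $W^c$ has a saddle-node phase portrait along $\zeta_2$, and the full three-dimensional picture near $\zeta_2$ is this planar saddle-node augmented by the hyperbolic $-b$ direction, giving Fig.\ \ref{RetratoFaseSND12}.

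The main obstacle is conceptual rather than computational: one must correctly identify which pair of eigenvalues in the factored polynomial \eqref{PolCarac} survives in the reduced three-dimensional system and recognize that, at the origin on $\zeta_2$, the coincidence $a = 2b$ forces precisely the $\lambda_3$ eigenvalue to degenerate, so that the center direction is two-dimensional and contains $\zeta_2$ entirely. Once this is in place, the sign check $\lambda_3\lambda_4 < 0$ along $\zeta_1$ is immediate from $c \neq 0$, and the saddle-node character along $\zeta_2$ is the direct content of \eqref{derXcpu1}--\eqref{derXcpu2}; the remaining geometric assertions follow from standard Invariant and Center Manifold Theory combined with the planar saddle-node normal form.
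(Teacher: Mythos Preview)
Your approach is essentially the paper's: the eigenvalue computations for $\zeta_1$ match exactly, and for $\zeta_2$ you correctly identify the spectrum $\{0,0,-b\}$ and invoke center manifold theory. The one point that needs sharpening is your analysis of the flow on $W^c$. You write that the restricted flow ``in coordinates $(u_3,P)$'' is controlled by $C(u_3,P)$ and that the conditions \eqref{derXcpu1}--\eqref{derXcpu2} are ``exactly the transversal unfolding and quadratic nondegeneracy hypotheses'' of a saddle-node. But $u_3$ is a dynamical variable here, not a parameter, and $C$ is only the $X_4$-component along the partially umbilic surface, not the restricted flow on $W^c$; so the classical saddle-node \emph{bifurcation} normal form does not apply directly. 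The paper handles this by an explicit linear change of coordinates \eqref{mcd12a} putting the system in the form $\dot u=w+f$, $\dot w=g$, $\dot{\overline P}=-b\overline P+h$, then parametrizing $W^c$ as a graph $\overline P=\overline P(u,w)$ and verifying that the planar restriction satisfies $\partial U/\partial w(0,0)=1$ and $\partial^2 U/\partial u^2(0,0)=c\,\chi_{12}/\bigl(b^2(k-k_3)\bigr)\neq 0$, which is the genuine nondegeneracy input for the two-dimensional phase portrait (via \cite{dla}, Theorem~3.5). Your conditions on $C$ do encode the same information---$\partial^2 C/\partial P^2\neq 0$ is what forces the normal eigenvalue $\lambda_1(P)\sim -2cP$ along $\zeta_2$ to change sign transversally---but to make your argument rigorous you should either carry out that coordinate change or else state explicitly that it is the sign change of the transverse eigenvalue along the curve of equilibria, together with $\chi_{12}\neq 0$, that fixes the phase portrait.
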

\begin{figure}[h]
\psfrag{zeta2}{$\zeta_2$}
\psfrag{p}{P}
\begin{center}
 \def\svgwidth{0.8\textwidth}
    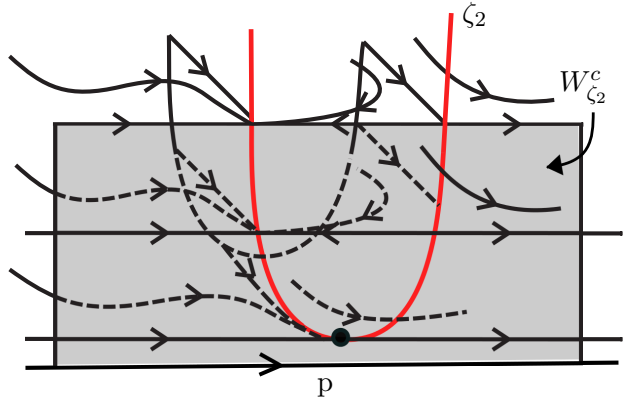
  \caption{Phase portrait of $X_{{\mathcal L}}$ in a neighborhood of $\zeta_2$ and the center manifold $W^c_{\zeta_2}$.  }\label{RetratoFaseSND12}
    \end{center}
    \end{figure}

\begin{proof}  For $u_3$ sufficiently small, the nonzero
eigenvalues of  $DX(\zeta_1(u_3))$ are:
$$
\lambda_1(u_3)=-\frac{b^2+c^2}{b}+O(1),\quad \lambda_2(u_3)=\frac{c^2}{b}+O(1).
$$

Therefore,   $\zeta_1$ is normally hyperbolic of saddle type in the neighborhood of $\zeta_1(0)$ (see \cite{hps} and \cite{fe}).

Direct calculation shows that
$
\frac{\partial\mathcal{L}}{\partial u_2}(0,0,0,0)=b\neq 0.
$
The implicit solution
 $u_2=u_2(u_1,u_3,P)$  of
 $\mathcal{L}(u_1,u_2,u_3,P)=0$, in a neighborhood of $0$,
is:
{\small
\begin{align*}
u_2(u_1,u_3,P)&=-{\frac {q_{111}}{b}}u_3-\frac{1}{2b}\left(B+2{\frac {q_{201
}q_{111}}{k-k_3}} \right)\!{u}_1^{2}+\left({\frac {q_{201}}{b}}-{\frac {q_{021}}{b}}+{\frac {cq_{111}}{{
b}^{2}}}\right)\!u_3P\\
&+\left({\frac {q_{201}q_{111}q_{021}}{ \left(k-k_3 \right) {b}^{2}}}
-{\frac {q_{201}q_{012}}{ \left(k-k_3 \right) b}}+{\frac {{q_{111}}^{3}}{
 \left(k-k_3 \right) {b}^{2}}}+{\frac {q_{111}C}{{b}^{2}}}-{\frac {q_{102}q_{111}}{
 \left(k-k_3 \right) b}}\right.\\
&\left.-{\frac {q_{111}{k}^{3}}{{b}^{2}}}-{\frac {Q_{211}}{b}}
\right)u_1u_3+\left(-{\frac {{q_{111}}^{3}q_{021}}{ \left(k-k_3 \right) {b}^{3}}}+
{\frac {q_{111}q_{021}q_{102}}{ \left(k-k_3 \right) {b}^{2}}} -\frac12{\frac {D{q_{111}}^{2}}{{b}^{3}}}\right.\\
&\left.+{\frac {{q_{111}}^{2}q_{012}}{ \left(k-k_3 \right) {b}^{2}}}
+{\frac {Q_{121} q_{111}}{{b}^{2}}}-\frac12{\frac {Q_{112}}{b}}
-{\frac {q_{012}q_{102}}{b \left(k-k_3 \right) }}
\right)u_3^2+O(3)
\end{align*}
}

Near
the origin, in the chart $(u_1,u_3,P)$ the vector field $X_{\mathcal{L}}$ defined in equation (\ref{CLCcap1}), with $a=2b$ and $u_2=u_2(u_1,u_3,P)$,
is given by:
\begin{equation}\label{campoLieCartanD12}
X_{\mathcal{L}}:=\left\{\begin{array}{lll}
\dot u_1&=&X_1(u_1,u_2(u_1,u_3,P),u_3,P)\\ %SS PARENTESIS PARASITA ELIMINADO 
\dot u_3&=&X_3(u_1,u_2(u_1,u_3,P),u_3,P)\\
\dot P&=&X_4(u_1,u_2(u_1,u_3,P),u_3,P)
\end{array}
\right.
\end{equation}
where
{\small
\begin{align*}
X_1&={\frac { \left(q_{021}b-cq_{111}-q_{201}b \right) }{b}}u_3+O(2)\\
X_3&=\Big(\mathcal{U}(u_1,u_2(u_1,u_3,P),u_3)+P\mathcal{V}(u_1,u_2(u_1,u_3,P),u_3)\Big)X_1\\
X_4&=\left(-B-3{\frac {q_{201}q_{111}}{k-k_3}} \right) u_1-bP+\\
&+\! \left(\!-Q_{211}\!+\!{\frac {q_{111}C}{b}}\!+\!{\frac {{k}^{3}}{b}}-\!{\frac {2q_{102}q_{111}}{k-k_3}}-\!{\frac {q_{201}q_{012}}{k-k_3}}\!+\!{\frac {2{u_3q_{111}}^{3}}{ \left(k-k_3 \right) b}}\!+\!{\frac {q_{201}q_{021}q_{111}}{ \left(k-k_3 \right) b}}\!\right)\!\!u_3-\\
&+O(2)
\end{align*}
}

Also $DX_{\mathcal{L}}(\zeta_2(P))$
has one zero eigenvalue and the other
ones  are given by:
$$
\lambda_1(P)=3bP^2-2cP+O(3)\textrm{ and  }\lambda_2(P)=-b+cP-2bP^2+O(3).
$$
It will be supposed that  $ b,\; c >0$ without loss of generality.

As $\lambda_2(P)<0$, for  $P$ sufficiently small,
by Invariant Manifold Theory  (see \cite[page 44]{hps} and \cite{fe}) there exists an invariant manifold $W^s(\zeta_2(P))$,
of  class
$C^{k-3}$ where $\zeta_2(P)$ is an attractor.

For $u_1=0$, there exists a two dimensional invariant center manifold, $W_{\zeta_2}^c$, of class $C^{k-3}$ that contains the curve of singularities  $\zeta_2$ in a neighborhood of $\zeta_2(0)$.

Below it will be shown that the phase portrait of $X$ restricted to $W^c_{\zeta_2}$ is as shown in Fig.
 \ref{D12SN}.

 \begin{figure}[h]
\begin{center}
 \def\svgwidth{0.6\textwidth}
    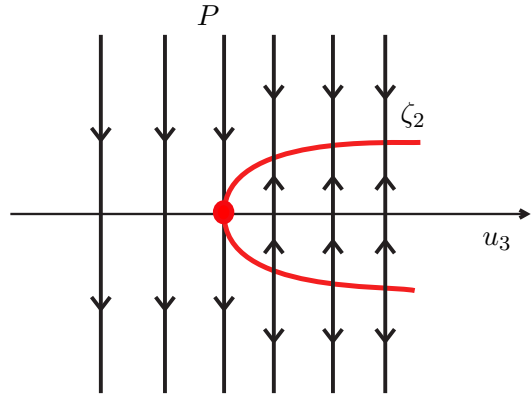
  \caption{Phase portrait of $X$ restricted to  the center manifold $W^c_{\zeta_2}$  near the point $\zeta_2(0)$.}
  \label{D12SN}
    \end{center}
\end{figure}

Perform the change of coordinates $(u_1,u_2,P)\rightarrow (u,w,\overline{P})$ such that the tangent plane to $W_{\zeta_2}^c$ at zero is the plane $w\overline{P}$.  Let

\begin{equation}\label{mcd12a}
\begin{array}{ll}
u_1&={\m\frac{\left(q_{021}b-q_{201}b-cq_{111}\right)}{b^2}} w+\m\frac{\chi_{12}}{b \left(Bk-Bk_3+3q_{201}q_{111} \right)} \overline{P}\\
u_3&=w\\
P&=-\m\frac { \chi_{12}}{{b}^{2}(k-k_3)}u+\m\frac { \chi_{12} }{{b}^{3}(k-k_3) }w-\m\frac {\chi_{12}}{{b}^{3}(k-k_3)}\overline{P}.
\end{array}
\end{equation}

Replacing \eqref{mcd12a} in  \eqref{campoLieCartanD12} it follows that:

\begin{equation*}
Y_r:=\left\{\begin{array}{lll}
\dot u&=&w+f(u,w,\overline{P})\\
\dot w&=&g(u,w,\overline{P})\\
\dot{\overline{P}}&=&-b\overline{P}+h(u,w,\overline{P})
\end{array}.
\right.
\end{equation*}

\noindent where $f(0,0,0)=g(0,0,0)=h(0,0,0)=0$,
 $\m\frac{\partial f}{\partial u}(0,0,0)
=\m\frac{\partial f}{\partial w}(0,0,0)=
\m\frac{\partial f}{\partial \overline{P}}(0,0,0)=0$,
$\m\frac{\partial g}{\partial u}(0,0,0)=\m\frac{\partial g}{\partial w}(0,0,0)
=\m\frac{\partial g}{\partial \overline{P}}(0,0,0)=0$, $\m\frac{\partial h}{\partial u}(0,0,0)=
\m\frac{\partial h}{\partial w}(0,0,0)=\m\frac{\partial h}{\partial \overline{P}}(0,0,0)=0$.

The center manifold $W^c_{\zeta_2}$, associated to $Y_r$, can be parametrized by
 $\overline{P}=\overline{P}(u,w)$. The restriction of $Y_r$ to $W^c_{\zeta_2}$ is given by:

\begin{equation} \label{camporvc}
Y_r\Big|_{W^c_{\zeta_2}}=\left\{\begin{array}{lll}
\dot u&=&U(u,w)\\
\dot w&=&W(u,w)
\end{array}
\right.,
\end{equation}
where $\frac{\partial U}{\partial w}(0,0)=1$ and
$
\frac{\partial^2 U}{\partial u^2}(0,0)=\displaystyle\frac{c\chi_{12}}{b^2(k-k_3)}\neq0.
$
 
So the phase portrait of  $Y_r$ restricted to the center manifold is given as in Fig. \ref{D12SN} (see \cite{dla}, Theorem $3.5$ pages 128 and 129 ).

Therefore, there exists invariant manifolds $
W^s_{\zeta_1(u_1)}$ and $W^c_{\zeta_2}$ of  $X_{\mathcal{L}}$ such that the phase portrait in the neighborhood  of these manifolds is as illustrated in Fig.  \ref{RetratoFaseSND12}.
\end{proof}

\subsection{End of proof of Theorem \ref{th:d12} }

Let $\mathcal{S}$ be a partially umbilic curve and $p\in\mathcal{S}$
be  of type $D_{12}$. By lemma \ref{equilD12}, the Lie-Cartan vector field has two curves of singularities   $\zeta_1$ and $\zeta_2$.
By lemma \ref{lemaRFD12}, $\zeta_1$ is normally hyperbolic of saddle type, and the phase portrait of the  Lie-Cartan vector field in a neighborhood of  $\zeta_2$ is as shown in Fig. \ref{RetratoFaseSND12}.
For  $\zeta_{1}$, there are  bi-dimensional invariant manifolds    $W^{s}_{\zeta_1}$ (stable manifold) and
 $W^{s}_{\zeta_1}$ (unstable manifold), of class $C^{k-3}$, and $W^{u}_{\zeta_1}\cap W^{s}_{\zeta_1}=\zeta_1$. For $\zeta_2$, there are  bi-dimensional invariant manifolds $W^{s}_{\zeta_2}$
  (stable manifold) and $W^{c}_{\zeta_2}$ (center manifold).

Observe that $\Pi(W^{u}_{\zeta_1})=\Pi(W^{c}_{\zeta_2})=\mathcal{S}$. Define $W=\Pi(W^{s}_{\zeta_1})$ and $W_3=\Pi(W^{s}_{\zeta_2})$.

This ends the proof of theorem \ref{th:d12} %SS \ref{th:d23} CORRIGIDO 30/10
. For an illustration see Fig. \ref{projD12a}.

\begin{figure}[h]
\psfrag{d12}{$D_{12}$}
\psfrag{pi}{$\Pi$}
\psfrag{zeta1}{$\zeta_1$}
\psfrag{zeta2}{$\zeta_2$}
\psfrag{ws2}{$W^s(\zeta_2)$}
\begin{center}
 \def\svgwidth{0.8\textwidth}
    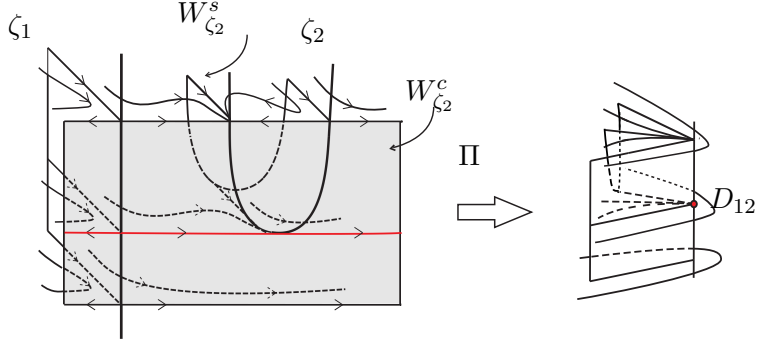
  \caption{ Normally hyperbolic singularity $\zeta_1$  and center manifold $W^c_{\zeta_2}$ containing
  $\zeta_2$.}
  \label{projD12a}
    \end{center}
\end{figure}

\section{Principal foliations near a partially umbilic point $D_{23}$}\label{sec:d23}

\subsection{ Local analysis of the Lie-Cartan vector field $X$}

\begin{lemma}\label{equilD23}
Let $\mathcal S$ be the partially umbilic set. If $p\in\mathcal S$ is of type $D_{23}$, then the Lie-Cartan vector field $X$ has three lines of singularities $\gamma_i$, $i=1,2,3$.
\end{lemma}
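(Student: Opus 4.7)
The plan is to parallel the strategy used in Lemmas~\ref{lema1} and~\ref{equilD12}: locate the singular set of $X=X_{\mathcal L}$ by splitting the system into the partially umbilic equations $L_r=M_r=0$ and the cubic factor in $P$ coming from $X_4$, and then invoke the Implicit Function Theorem. The twist at a $D_{23}$ point is that the previous transversality condition $b(b-a)\neq 0$ fails (here $a=b$), so the parametrization of $\mathcal S$ by $u_3$ used in the Darbouxian and $D_{12}$ cases is no longer available. However, the hypothesis $b(q_{201}-q_{021})+c\,q_{111}\neq 0$ is precisely what restores regularity for a different pair of unknowns, allowing a parametrization by $u_1$.

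First I would use the Monge expressions for $L_r$ and $M_r$ read off from $X_1=\mathcal L_P=2L_r P + M_r$ in equation~(\ref{CLCcap1}). Setting $a=b$ these become
\begin{equation*}
L_r = -b\,u_2 - q_{111}\,u_3 + O(2),\qquad M_r = c\,u_2 + (q_{021}-q_{201})\,u_3 + O(2),
\end{equation*}
so the Jacobian of $(L_r,M_r)$ with respect to $(u_2,u_3)$ at the origin equals $b(q_{201}-q_{021})+c\,q_{111}$, which is nonzero by the $D_{23}$ hypothesis. The Implicit Function Theorem then yields smooth functions $u_2=u_2(u_1)$ and $u_3=u_3(u_1)$ with $u_2(0)=u_3(0)=0$ that parametrize the partially umbilic curve $\mathcal S$ by the single coordinate $u_1$. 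This is consistent with the tangency of $\mathcal S$ to the plane $\mathcal P_3$ at $p$ recorded in Remark~\ref{rem:chi23}.

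Second, I would substitute this parametrization into the cubic factor of $X_4$, defining
\begin{equation*}
C(u_1,P)=A_3\,P^3+A_2\,P^2+A_1\,P+A_0\ \bigr|_{(u_1,u_2(u_1),u_3(u_1))}.
\end{equation*}
Using the coefficient expressions displayed after equation~(\ref{CLCcap1}) and imposing $a=b$, the evaluation at $u_1=0$ simplifies to
\begin{equation*}
C(0,P)=b\,P^3 - c\,P^2 - b\,P = P\bigl(b\,P^2 - c\,P - b\bigr),
\end{equation*}
whose discriminant $c^2+4b^2$ is strictly positive. Hence $C(0,\cdot)$ has three distinct simple real roots
\begin{equation*}
P_1(0)=0,\qquad P_{2,3}(0)=\frac{c\pm\sqrt{c^2+4b^2}}{2b}.
\end{equation*}

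Finally, because $\partial_P C(0,P_i(0))\neq 0$ for $i=1,2,3$, the Implicit Function Theorem produces three smooth branches $P=P_i(u_1)$ of solutions of $C(u_1,P)=0$ near $u_1=0$. Lifting each branch to the Lie--Cartan hypersurface $\mathcal L$ yields the three lines of singularities
\begin{equation*}
\gamma_i(u_1)=\bigl(u_1,\,u_2(u_1),\,u_3(u_1),\,P_i(u_1)\bigr),\qquad i=1,2,3,
\end{equation*}
as claimed. The only delicate step is the algebraic reduction in the previous paragraph, since the coefficients $A_0,\dots,A_3$ contain many geometric parameters; but only the values at the origin are needed, and with $a=b$ these reduce to the explicit cubic above. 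The finer normal-hyperbolicity and saddle-node analysis is not required at this stage and will be carried out in the subsequent lemmas analogous to~\ref{lema2} and~\ref{lemaRFD12}.
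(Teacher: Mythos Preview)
Your proposal is correct and follows essentially the same route as the paper: the Implicit Function Theorem applied to $(L_r,M_r)$ in the variables $(u_2,u_3)$ using the $D_{23}$ hypothesis $b(q_{201}-q_{021})+cq_{111}\neq 0$, followed by the analysis of the cubic $C(u_1,P)$ in $P$. The only cosmetic difference is that the paper invokes the cubic discriminant $D(u_1)=-\tfrac{1}{108}\tfrac{c^2+4b^2}{b^2}+O(1)<0$ directly, whereas you factor $C(0,P)=P(bP^2-cP-b)$ and use simplicity of the three roots to apply the Implicit Function Theorem in $P$; both yield the same three branches $P_i(u_1)$ and hence the curves $\gamma_i$.
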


\begin{proof}
The singular points of $X$ are given by\\
\begin{equation}\label{sistequlXcap2}
\left\{
\begin{array}{l}
\left\{\begin{array}{c}
L_r(u_1,u_2,u_3)=0,\\
M_r(u_1,u_2,u_3)=0,
\end{array}\right.
\textrm{ (Partially Umbilic Points)}\\
\!\!\!A_3P^3+A_2P^2+A_1P+A_0=0\quad\textrm{ (see equation (\ref{CLCcap1}))}
\end{array}
\right.
\end{equation}
 
By lemma \ref{lema1},
$\mathcal S$ is a regular curve.
As $\det\left(\m\frac{\partial(L_r,M_r)}{\partial(u_2,u_3)}\Big|_{u_1=0=u_2=u_3}\right)=b(-q_{201}+q_{021})+cq_{111}\neq0$ we can write $u_2=u_2(u_1)$ and $u_3=u_3(u_1)$ in $L_r(u_1,u_2,u_3)=0$ and $M_r(u_1,u_2,u_3)=0$.
In the Monge chart (restricted), $\mathcal{S}$ is parametrized by
\begin{equation}\label{coordCurvaParcUmb}
\begin{array}{cl}
u_2=c_2(u_1)&\!\!\!\!=-\m\frac{1}{2(\left(q_{021}-q_{201} \right) b-cq_{111})} \left[q_{111}A+q_{111}C- \left(q_{021}-q_{201} \right)B\right.\\
&\\
&\!\!\!\!\left.-2\left(q_{201}q_{021}-q_{111}^{2} \right) q_{111}(k-k_3)^{-1}-2q_{111}{k}^{3}\right]u_1^{2}+O \left(3 \right) \\
&\\
u_3=c_3(u_1)&\!\!\!\!=\m\frac{1}{ \left(-2q_{201}b-2cq_{111}+42b \right)}\left[ bA-bC+cB-2{k}^{3}b\right.\\
&\\
&\!\!\!\!\left.+(k-k_3)^{-1}(-2\left(q_{111}^{2}+q_{201}^{2} \right) b+2q_{201}q_{111}c) \right] u_1^{2}+O\left(3 \right)
\end{array}
\end{equation}

From equation (\ref{CLCcap1}) with $a=b$, the second equation of
 (\ref{sistequlXcap2})  is given by:

\begin{equation}\label{equilibriosCub}
\left(b+O(1)\right)P^3+\left(-c+O(1)\right)P^2+\left(-b+O(1)\right)P+O(1)=0,
\end{equation}
The discriminant of the equation (\ref{equilibriosCub}) is given by:

\begin{equation}\label{discrdd1}
D(u_1)=-{\frac {1}{108}}{\frac {{c}^{2}+4{b}^{2}}{{b}^{2}}}+O(1)<0.
\end{equation}
Therefore, for $u_1$ sufficiently small the equation (\ref{equilibriosCub})
has three solutions $P_i(u_1)$($i=1,2,3$) given by
\begin{equation}\label{funcoesPiD23}
\begin{array}{ll}
P_1(u_1)&=0+O(1); P_2(u_1)={\m\frac {c+\sqrt {{c}^{2}+4{b}^{2}}}{2b}}+O(1);\\
&\\
P_3(u_1)&={\m\frac {c-\sqrt {{c}^{2}+4{b}^{2}}}{2b}}+O(1).
\end{array}
\end{equation}

So $X$ has three curves of singularities
 $\gamma_1$, $\gamma_2$ and  $\gamma_3$. Moreover $\Pi(\gamma_i)=\mathcal S$, $i=1,2,3$.

In the space
with coordinates
$(u_1,u_2,u_3,P)$ the curves of singularities of $X$ are given by:
 \begin{equation}\label{CPUsD23}
\gamma_i:\quad u_2=c_2(u_1),u_3=c_3(u_1)\; \textrm{ and } \;P=P_i(u_1),\quad i=1,2,3
\end{equation}
where $c_2$, $c_3$ and  $P_i$ $(i=1,2,3)$ are given by equations (\ref{coordCurvaParcUmb}) and
 (\ref{funcoesPiD23}),
 respec\-ti\-ve\-ly.
\end{proof}

\begin{lemma}\label{equiliD23} Let $X_{\mathcal{L}}$ be the  Lie-Cartan vector field
tangent to the    Lie-Cartan hypersurface.
The curves of singularities $\gamma_i$, $i=2,3$ are normally hyperbolic of saddle type of
  $X_{\mathcal{L}}$. Near the curve $\gamma_1$, the phase portrait of $X_{\mathcal{L}}$
is as shown in Fig.   \ref{D23SN1}.
\end{lemma}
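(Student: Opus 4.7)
The proof will follow the same scheme used for Lemma \ref{lemaRFD12}, with appropriate adaptations to the $D_{23}$ condition $b=a$ (which is what makes $P_1(0)=0$ a degenerate root of the cubic in \eqref{equilibriosCub}). The plan has three parts: the two hyperbolic saddles $\gamma_2,\gamma_3$, the saddle-node character of $\gamma_1$, and the identification of $\chi_{23}$ as the genuine nondegeneracy coefficient.

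First, for $i=2,3$ I would use the structure of the characteristic polynomial of $DX$ along a singular curve displayed in equation \eqref{PolCarac}, namely $p(\lambda)=\lambda^{2}\bigl(\partial X_{4}/\partial P-\lambda\bigr)\bigl(\lambda-\partial X_{1}/\partial u_{1}-\partial X_{2}/\partial u_{2}-\partial X_{3}/\partial u_{3}\bigr)$. Using the explicit expressions of $X_1,\dots,X_4$ in \eqref{CLCcap1} with $a=b$, evaluated along $\gamma_{i}(u_1)=(u_1,c_{2}(u_1),c_{3}(u_1),P_{i}(u_1))$, I expect the two potentially nonzero eigenvalues to be, at $u_1=0$,
\[
\lambda_{3}(0)=bP_{i}(0)^{2}+b+O(1),\qquad \lambda_{4}(0)=-bP_{i}(0)^{2}-b\bigl(1-P_{i}(0)^{2}\bigr)-cP_{i}(0)+O(1),
\]
which, using the explicit values $P_{2}(0)=(c+\sqrt{c^{2}+4b^{2}})/(2b)$ and $P_{3}(0)=(c-\sqrt{c^{2}+4b^{2}})/(2b)$ from \eqref{funcoesPiD23}, will have opposite signs. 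Together with the double zero eigenvalue (whose eigenvector is tangent to the partially umbilic curve $\mathcal{S}$, lifted in one direction to $\gamma_{i}$), the normal hyperbolicity of saddle type of $\gamma_2$ and $\gamma_3$ then follows from the Invariant Manifold Theorem as in \cite{hps,fe}.

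Next, for $\gamma_{1}$, where $P_{1}(0)=0$, the same characteristic polynomial at $u_1=0$ gives $\partial X_{4}/\partial P(0)=0$, so the generic normal hyperbolicity fails. I would reduce dimension: since $\partial\mathcal{L}/\partial u_{2}(0,0,0,0)=b\ne 0$ (because $a=b\ne 0$), solve $\mathcal{L}=0$ locally as $u_{2}=u_{2}(u_{1},u_{3},P)$ and work in the chart $(u_{1},u_{3},P)$, exactly as in \eqref{campoLieCartanD12}. In this chart $\gamma_{1}$ is a curve of singularities tangent to the $u_{1}$-axis, with linearization having one nonzero eigenvalue $\lambda=-b+O(1)$ coming from the $P$-direction and a zero eigenvalue transverse to the singular curve. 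By the Center Manifold Theorem there is a $2$-dimensional invariant $C^{k-3}$ center manifold $W^{c}_{\gamma_{1}}$ containing $\gamma_{1}$, and a $2$-dimensional stable manifold $W^{s}_{\gamma_{1}}$.

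The crucial and most delicate step is to verify the saddle-node character on $W^{c}_{\gamma_{1}}$. After a linear change of coordinates straightening the center subspace at $0$, the reduced vector field $X|_{W^{c}_{\gamma_{1}}}$ will have the form
\[
\dot{u}=w+O(2),\qquad \dot{w}=\Psi\,u^{2}+\Phi\,uw+\Theta\,w^{2}+O(3),
\]
and the conclusion (phase portrait of Fig.\ \ref{D23SN1}, i.e.\ saddle-node type) will follow from the classical normal form theory for a nilpotent $2$-jet (see e.g.\ \cite{dla}, Theorem 3.5) provided the second-order coefficient controlling the cuspidal contact along the singular curve is nonzero. This coefficient, after the systematic but routine expansion using the restricted fundamental forms from the Appendix and the parametrization \eqref{coordCurvaParcUmb} of $\mathcal{S}$, should equal $\chi_{23}$ up to a nonzero algebraic factor involving $b$, $c$ and $k-k_{3}$; this is precisely the content announced in Remark \ref{rem:chi23}. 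The main obstacle is bookkeeping: tracking the third-order jet data through the substitutions $u_{2}=u_{2}(u_{1},u_{3},P)$ and $P=P(u,w)$ defining the center manifold, so as to confirm that the emergent coefficient is exactly the expression $\chi_{23}$ in \eqref{chid23} rather than an accidentally different combination. This verification is completely analogous to the calculation of the coefficient $c\chi_{12}/(b^{2}(k-k_{3}))$ that appeared at the end of the proof of Lemma \ref{lemaRFD12}, and once it is in place, the phase portrait of Fig.\ \ref{D23SN1} follows, completing the lemma.
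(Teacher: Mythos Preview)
Your overall strategy coincides with the paper's: eigenvalue analysis for $\gamma_2,\gamma_3$, then a center-manifold reduction for $\gamma_1$ followed by an appeal to Theorem~3.5 of \cite{dla}. The hyperbolic part is fine in spirit (the precise eigenvalue formulas you wrote are not quite those of the paper, which obtains $\lambda_{1}=-b(P_i(0)^{2}+1)+O(1)$ and $\lambda_{2}=b(P_i(0)^{2}+1)+O(1)$, but the opposite-sign conclusion is correct).

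There is, however, a genuine gap in your treatment of $\gamma_1$. You predict a reduced system of the form $\dot u=w+O(2)$, $\dot w=\Psi u^{2}+\Phi uw+\Theta w^{2}+O(3)$ with the \emph{cuspidal} coefficient $\Psi$ proportional to $\chi_{23}$. This cannot be right: $\gamma_1$ is a \emph{curve of equilibria} lying inside $W^{c}_{\gamma_1}$ and tangent to the $u$-axis, so from $\dot u=0$ one gets $w=O(u^{2})$ along $\gamma_1$, and substituting into $\dot w$ forces the $u^{2}$ coefficient of $\dot w$ to vanish. In the paper's computation one indeed finds $W(u,w)$ with \emph{no} $u^{2}$ term; what is nonzero is the $u^{2}$ coefficient of $U(u,w)$, but that coefficient is \emph{not} $\chi_{23}$ either. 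The paper then performs a second change of variables $u_1=u$, $w_1=U(u,w)$ which straightens $\gamma_1$ to $\{w_1=0\}$, and obtains
\[
u_1'=w_1,\qquad w_1'=w_1\Bigl[\tfrac{cq_{111}-b(q_{021}-q_{201})}{b^{4}(k-k_3)}\,\chi_{23}\,u_1+A_w w_1+O(2)\Bigr].
\]
Thus $\chi_{23}$ is the coefficient governing the \emph{change of sign of the normal eigenvalue along the line of equilibria} (a transversal saddle--node along $\gamma_1$), not a cusp coefficient. Your analogy with Lemma~\ref{lemaRFD12} is misleading here: in the $D_{12}$ case the relevant coefficient $c\chi_{12}/(b^{2}(k-k_3))$ was $\partial^{2}U/\partial u^{2}$ (a coefficient of $\dot u$, not of $\dot w$), and the geometry of how the singular curve sits in the center manifold is different. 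If you redo the bookkeeping with this correction in mind, the argument goes through.
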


\begin{figure}[h]
%\psfrag{gama1}{$\gamma_1$}
%\psfrag{w1s}{$W^s(\gamma_1)$}
%\psfrag{w1c}{$W^c(\gamma_1)$}
%\psfrag{p}{P}
\begin{center}
 \def\svgwidth{0.6\textwidth}
    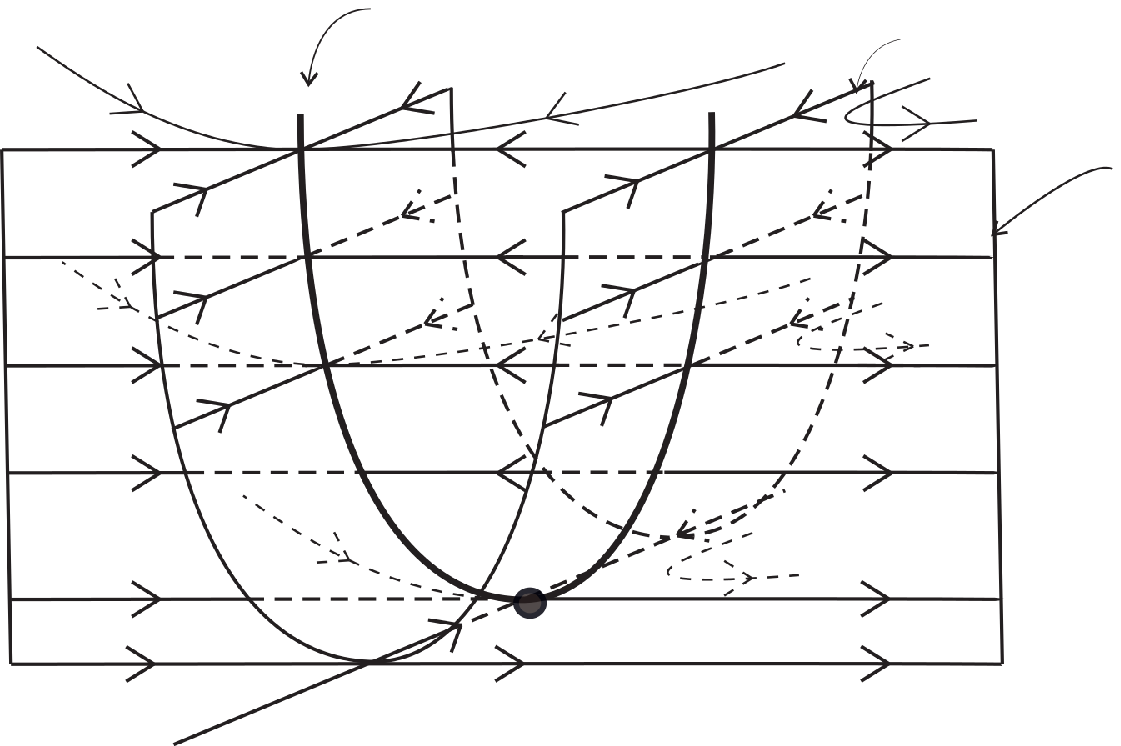
    \caption{Phase portrait of  $X_{\mathcal{L}}$
in a neighborhood of  $\gamma_1$.}
  \label{D23SN1}
    \end{center}
\end{figure}
%SS ESTA FIG, 15,  ESTÁ IMPRIMINDO MUITO TENUE , COMPARAR COM DO BSM +  TEM ALGUMA PONTAS DE SETAS SOLTAS E OS TRAÇOS VERTICAIS, DIREITO E ESQUERDO PARECEM ENTRECORTADOS...
\begin{proof}
The linearisation  $DX(\gamma_i(u_1))$, $i=2,3$, of
the
 Lie-Cartan vector field,
has two zero eigenvalues and the non-zero
ones  are given by:
$$
\lambda_1(u_1)=-b(P_i(0)^2+1)+O(1),\quad \lambda_2(u_1)=b(P_i(0)^2+1)+O(1),\quad i=2,3.
$$

Therefore for $u_1$ sufficiently small it follows that  $\lambda_1(u_1)<0$ and
$\lambda_2(u_1)>0,$  (assuming that $b>0$).

 Then   the curves $\gamma_2$ and  $\gamma_3$ are normally hyperbolic saddles of $X_{\mathcal L}$.

In a neighborhood of $P=P_i(0)$, $i=2,3$, the Lie-Cartan
 hypersurface is regular. In fact,

\begin{equation*}\label{LCGP2P3}
\frac{\partial\mathcal{L}}{\partial u_3}(0,0,0,P_i(0))=-
\m\frac{(b(q_{021}-q_{201})-cq_{111})(c+(-1)^i\sqrt{c^2+4b^2})}{2b^2}\neq0.
\end{equation*}

Next the analysis of  $X$  near
$\gamma_1(u_1)$, will be developed.

Direct calculation shows that $\frac{\partial\mathcal{L}}{\partial u_2}(0,0,0,0)=b\neq0$. The solution, $u_2=u_2(u_1,u_3,P)$ of the implicit equation $\mathcal{L}(u_1,u_2(u_1,u_3,P),u_3,P)=0$ near the $(0,0,0,0)$ is given by
 
{\small
\begin{align*}
u_2(u_1,u_3,P)&=-{\frac {q_{111}}{b}}u_3-\frac{1}{2b}\left(B+2{\frac {q_{201
}q_{111}}{k-k_3}} \right)\!{u}_1^{2}+\left({\frac {q_{201}}{b}}-{\frac {q_{021}}{b}}+{\frac {cq_{111}}{{
b}^{2}}}\right)\!u_3P\\
&+\left({\frac {q_{201}q_{111}q_{021}}{ \left(k-k_3 \right) {b}^{2}}}-
{\frac {q_{201}q_{012}}{ \left(k-k_3 \right) b}}+
{\frac {{q_{111}}^{3}}{ \left(k-k_3 \right) {b}^{2}}}+
{\frac {q_{111}C}{{b}^{2}}}-{\frac {q_{102}q_{111}}{ \left(k-k_3 \right) b}}\right.\\
&\left.-{\frac {q_{111}{k}^{3}}{{b}^{2}}}-{\frac {Q_{211}}{b}}
\right)u_1u_3+\left(-{\frac {{q_{111}}^{3}q_{021}}{ \left(k-k_3 \right) {b}^{3}}}
+{\frac {q_{111}q_{021}q_{102}}{ \left(k-k_3 \right) {b}^{2}}} -\frac12{\frac {D{q_{111}}^{2}}{{b}^{3}}}\right.\\
&\left.+{\frac {{q_{111}}^{2}q_{012}}{ \left(k-k_3 \right) {b}^{2}}}
+{\frac {Q_{121} q_{111}}{{b}^{2}}}-
\frac12{\frac {Q_{112}}{b}}-{\frac {q_{012}q_{102}}{b \left(k-k_3 \right) }}
\right)u_3^2+O(3)
\end{align*}
}
Near the origin, in the chart $(u_1,u_3,P)$, the vector field $X_{\mathcal L}$ is given by
\begin{equation}\label{campoLieCartan2}
X_{\mathcal{L}}:=\left\{\begin{array}{lll}
\dot u_1&=&X_1(u_1,u_2(u_1,u_3,P),u_3,P)\\  %SS UM ) ELIMINADO 
\dot u_3&=&X_3(u_1,u_2(u_1,u_3,P),u_3,P)\\
\dot p&=&X_4(u_1,u_2(u_1,u_3,P),u_3,P)
\end{array},
\right.
\end{equation}
where,

\begin{align*}
X_1&={\frac { \left(q_{021}b-cq_{111}-q_{201}b \right) }{b}}u_3+O(2)\\
X_3&=\Big(\mathcal{U}(u_1,u_2(u_1,u_3,P),u_3)+P\mathcal{V}(u_1,u_2(u_1,u_3,P),u_3)\Big)X_1\\
X_4&=\left(-B-3{\frac {q_{201}q_{111}}{k-k_3}} \right) u_1-bP+\\
&+\! \left(\!-Q_{211}\!+\!{\frac {q_{111}C}{b}}\!+\!{\frac {{k}^{3}}{b}}-
\!{\frac {2q_{102}q_{111}}{k-k_3}}-\!{\frac {q_{201}q_{012}}{k-k_3}}\!+
\!{\frac {2{q_{111}}^{3}}{ \left(k-k_3 \right) b}}\!+\!
{\frac {q_{201}q_{021}q_{111}}{ \left(k-k_3 \right) b}}\!\right)\!\!u_3-\\
&+O(2)
\end{align*}

The eigenvalues of $DX_{\mathcal{L}}(\gamma_1(u_1))$ are given by:

\begin{align*}
\lambda_2(u_1)=-b+O(1);\lambda_3(u_1)={\frac {q_{201}\left( \left(q_{021}-q_{201} \right)
 b-cq_{111} \right) }{b \left(k-k_3 \right) }}u_1+O(2).
\end{align*}

As $\lambda_2(u_1)<0$, for $u_1$ sufficiently small, it follows by
Invariant Manifold Theory, (see \cite[page 44]{hps} and \cite{fe})
that there exists an  invariant manifold $W^s_{\gamma_1}$ (stable manifold),
of class  $C^{k-3}$, see Fig.  \ref{D23SN1}.

For $u_1=0$, there exists a two dimensional invariant center manifold, $W_{\gamma_1}^c$, of class $C^{k-3}$ that contains the curve of singularities  $\gamma_1$ in a neighborhood of $\gamma_1(0)$.

Below it will be shown that the phase portrait of $X$ restricted to $W^c_{\gamma_1}$ is as shown in Fig. \ref{D23VC}.

Consider  $X_{\mathcal{L}}$ restricted to  $W_{\gamma_1}^c$ and perform the change of coordinates such that
the tangent plane to $W_{\gamma_1}^c$ at zero is the plane $uw$.  Let,

\begin{equation}\label{mcl}
\begin{array}{ll}
u_1&={\m\frac{\left(q_{021}b-q_{201}b-cq_{111}\right)}{b}} u\\
&+\frac{1}{b \left(Bk-Bk_3+3q_{201}q_{111} \right)}\left(-2q_{111}bq_{102}-Q_{211}kb+bk_3Q_{211}-bq_{012}q_{201}+\right.\\
&\left.+2q_{111}^{3}+q_{111}kC-q_{111}Ck_3-q_{111}{k}^{4}+q_{111}{k}^{3}k_3+q_{201}q_{111}q_{021} \right) w\\
u_3&=w\\
P&=-\m\frac { \left(q_{021}b-q_{201}b-cq_{111} \right)\left(Bk-Bk_3+3q_{201}q_{111} \right)}{{b}^{2}(k-k_3)}u\\
&+\m\frac { \left(q_{021}b-q_{201}b-cq_{111} \right)  \left(Bk-Bk_3+3q_{201}q_{111} \right) }{{b}^{3}(k-k_3) }w\\
&-\m\frac { \left(q_{021}b-q_{201}b-cq_{111} \right)  \left(Bk-Bk_3+3q_{201}q_{111}
\right)}{{b}^{3}(k-k_3)}\overline{P}.
\end{array}
\end{equation}

From equation (\ref{mcl}) it follows that  $X_{\mathcal{L}}$ is given by:

\begin{equation*}
Y_r:=\left\{\begin{array}{lll}
\dot u&=&w+f(u,w,\overline{P})\\
\dot w&=&g(u,w,\overline{P})\\
\dot{\overline{P}}&=&-b\overline{P}+h(u,w,\overline{P})
\end{array}.
\right.
\end{equation*}

\noindent where $f(0,0,0)=g(0,0,0)=h(0,0,0)=0$,
 $\m\frac{\partial f}{\partial u}(0,0,0)
=\m\frac{\partial f}{\partial w}(0,0,0)=
\m\frac{\partial f}{\partial \overline{p}}(0,0,0)=0$,
$\m\frac{\partial g}{\partial u}(0,0,0)=\m\frac{\partial g}{\partial w}(0,0,0)
=\m\frac{\partial g}{\partial \overline{p}}(0,0,0)=0$, $\m\frac{\partial h}{\partial u}(0,0,0)=
\m\frac{\partial h}{\partial w}(0,0,0)=\m\frac{\partial h}{\partial \overline{p}}(0,0,0)=0$.

The center manifold $W^c$, associated to $Y_r$ can be parametrized by
 $\overline{P}=\overline{P}(u,w)$. The restriction of $Y_r$ to $W^c$ is given by:

\begin{equation} \label{camporvcd12}
Y_r\Big|_{W^c}=\left\{\begin{array}{lll}
\dot u&=&U(u,w)\\
\dot w&=&W(u,w)
\end{array}
\right.,
\end{equation}
where

 $$\aligned U(u,w)=& 
-\frac{1}{2b(k-k_3)} [ \left(k-{   k_3} \right)  \left(-2\,b{k}^{3}+{  A}\,b+c{  B}-
 {  C}\,b \right) \\+&\left(-2\,{{   q_{111}}}^{2}+2\,{{   q_{201}}}^{2}
  \right) b+2\,{   q_{111}}\,{   q_{201}}\,c]u^2+\frac {b({  q_{021}-q_{201}})-c {  q_{111}} }{b}w\\
  +&  A_{11}uw+ A_{02} w^2+O(3)\\
 %)
  W(u,w)=&\left[ \frac {b({  q_{021}-q_{201}})-c {  q_{111}} }{b^2(k-k_3)}\right
   ] \left[q_{201}\,u w  +(b  q_{102}-  q^{2}_{111}) {w}^{2} \right]+O(3)
 \endaligned$$
 
 Performing the change of variables $u_1=u, w_1=U(u,w)$ it is obtained the vector field
 $$\aligned u_1^{\prime} =& w_1\\
  w_1^{\prime}=& w_1[ \frac {c {  q_{111}}-b({  q_{021}-q_{201}}) }{b^4(k-k_3)}  \chi_{23}u_1+  A_w w_1+O(2)]\endaligned $$
 
 The coefficients $A_{11}$, $A_{02}$ and $A_w$   have long expressions in terms of the other coefficients but are not relevant to determine the phase portrait in the center manifold.

So the phase portrait of  $Y_r$ restricted to the center manifold is given as in Fig. \ref{D23VC} (see \cite{dla}, Theorem $3.5$, pages 128 and 129).
  
\begin{figure}[h]
\psfrag{beta1}{$\gamma_1$}
\begin{center}
\def\svgwidth{0.5\textwidth}
    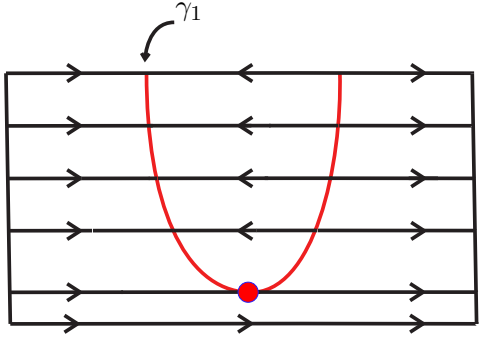
    \caption{Phase portrait of $Y_r\Big|_{W^c}$.}
  \label{D23VC}
    \end{center}
\end{figure}

Therefore, there exist  invariant manifolds
 $
W^s_{\gamma_1(u_1)}$ and $W^c_{\gamma_1(0)}$ of  $X_{\mathcal{L}}$ and the phase portrait in the neighborhood  of these manifolds is as illustrated in Fig. \ref{D23SN1}.
\end{proof}

\subsection{End of proof of Theorem \ref{th:d23} }\label{sub:d23}

Let $\mathcal{S}$ be a partially umbilic curve and $p\in\mathcal{S}$ of type $D_{23}$. By lemma \ref{equilD23}, there exists three curves of singularities $\gamma_1(u_1)$, $\gamma_2(u_1)$ and $\gamma_3(u_1)$ of the Lie-Cartan vector field. By lemma \ref{equiliD23}, the curves  $\gamma_2$ and $\gamma_3$ are normally hyperbolic for the vector field $X_{\mathcal L}$.
Near $\gamma_1$ the phase portrait of $X_{\mathcal{L}}$ is as in Fig.  \ref{D23SN1}.

For  $\gamma_1$, there are  bi-dimensional invariant manifolds $W^{s}_{\gamma_1}$ (stable manifold) and $W^{c}_{\gamma_1}$ (center manifold).

For  $i=2,3$, there are  bi-dimensional invariant manifolds    $W^{s}_{\gamma_i}$ (stable manifold) and
 $W^{u}_{\gamma_i}$ (unstable manifold), of class $C^{k-3}$, and $W^{u}_{\gamma_i}\cap W^{s}_{\gamma_i}=\gamma_i$.

Observe that $\Pi(W^{u}_{\gamma_2})=\Pi(W^{u}_{\gamma_3})=\Pi(W^{s}_{\gamma_1})=\mathcal{S}$, and
define $W=\Pi(W^{c}_{\gamma_1})$, $W_1=W_{1}(\mathcal{S})=\Pi(W^{s}_{\gamma_2})$ and  $W_2=W_{2}(\mathcal{S})=\Pi(W^{s}_{\gamma_3})$.

A  connected component of $  W_1\setminus {\mathcal S}$
 is invariant by \fp 1
and the other is invariant by \fp 2. The same conclusion  for $W_2$.

The invariant surfaces  $W_1$ and $W_2$ are  regular and there exist four   leaves $F_1$ and $F_4$ in $W_1$ and $F_2$ and $F_3$ in $W_2$   asymptotic to the partially umbilic $D_{23}$.

It follows that $F_1$ and $F_2$ are leaves of \fp 1 and $F_3$ and $F_4$ are leaves of \fp 2.  See Figs.  \ref{D23S} and \ref{projD23}. %  
 
  This ends the proof of Theorem \ref{th:d23}.
 
\begin{figure}[h]
\begin{center}
 \def\svgwidth{0.9\textwidth}
    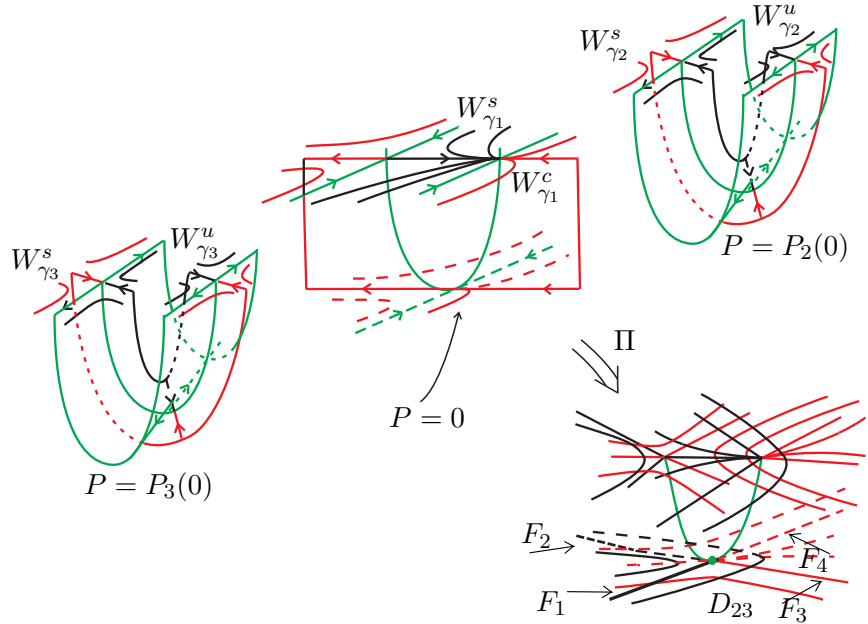
\caption{Projections of the integral curves of $X_{\mathcal{L}}$.
  \label{projD23}}
    \end{center}
\end{figure}

\section{A Summarizing Theorem}     \label{sec:summa}

Consider the space $\mathbb{J}^r(\M^3,\R^4)$ of $r$-jets of immersions $\alpha$  of $\M^3$ into $\mathbb R^4$, endowed with the structure of Principal Fiber Bundle. The base is $\M^3\times{\mathbb R}^4$, the fiber is the space $\mathcal{J}^r(3,4)$, where $\mathcal{J}^r(3,4)$ is the space of $r$-jets of immersions of ${\mathbb R}^3$ to ${\mathbb R}^4$, preserving the respective origins. The structure group, $\mathcal{A}_{+}^r$, is the product of the group $\mathcal{L}_+^r(3,3)$ of $r$-jets of
 diffeomorphisms of ${\mathbb R}^3$  preserving origin and the orientation, 
 acting on the right by coordinate changes, and the group $\mathcal{O}_+(4,4)$ of positive isometries; the action on the left consists of a positive rotation of ${\mathbb R}^4$.

Each 
 $4$-jet
 of an immersion at a partially umbilic point is of the form $(p,\tilde{p},w)$ with $(p,\tilde{p})\in\M^3\times{\mathbb R}^4$ 
and $w$  is in the orbit of a polynomial immersion $(u_1,u_2,u_3,h(u_1,u_2,u_3))$, where
\begin{equation}\label{funcaohJatos}
\begin{array}{ll}
h(u_1,u_2,u_3)\!\!\!\!&=\frac{k}{2}(u_1^2+u_2^2)+\frac{k_3}{2}u_3^2+\frac a6u_1^3+\frac b2u_1u_2^2+\frac c6u_2^3
+\frac{q_{003}}{6}u_3^3+\\
&\\
&+\frac{q_{012}}{2}u_2u_3^2+q_{111}u_1u_2u_3+\frac{q_{021}}{2}u_2^2u_3+\frac{q_{102}}{2}u_1u_3^2+\frac{q_{201}}{2}u_1^2u_3+\\
&\\
&+\frac{A}{24}{u}_1^{4}+\frac{B}{6}{u}_1^{3}u_2+\frac{C}{4}{u}_1^{2}{u}_2^{2}+\frac{D}{6}u_1u_2^{3}+\frac{E}{24}u_2^{4}+\frac{Q_{004}}{24}u_3^{4}+\\
&\\
&+\frac{Q_{013}}{6}u_3^{3}u_2+\frac{Q_{103}}{6}u_3^{3}u_1+\frac{Q_{022}}{4}u_2^{2}u_3^{2}+\frac{Q_{202}}{4}u_1^{2}u_3^{2}+\frac{Q_{031}}{6}u_3u_2^{3}+\\
&\\
&+\frac{Q_{112}}{2}u_3^{2}u_1u_2+\frac{Q_{301}}{6}u_1^{3}u_3+\frac{Q_{121}}{2}u_3u_2^{2}u_1+\frac{Q_{211}}{2}u_3u_2u_1^{2}\\
\end{array}
\end{equation}
The general quadratic part of $\tilde{h}$, where $(u_1,u_2,u_3,\tilde{h})$ is in the orbit of $h$, has the form
$$
k_{110}u_1u_2+k_{101}u_1u_3+k_{011}u_2u_3+\frac{k_{200}}{2}u_1^2+\frac{k_{020}}{2}u_2^2+\frac{k_{002}}{2}u_3^2.
$$
The manifold of partially umbilic jets, $(\mathcal{PU})^4$, is defined by  
the 
condition that the symmetric matrix
$$
\left(\begin{array}{ccc}
k_{200}&k_{110}&k_{101}\\
k_{110}&k_{020}&k_{011}\\
k_{101}&k_{011}&k_{002}
\end{array}\right)
$$
has two 
 equal
eigenvalues. In \cite{ga1}, R. Garcia showed that $(\mathcal{PU})^4$ is a submanifold of codimension $2$ in $\mathcal{J}^4(3,4)$.
Sse also Lax \cite{lax}.

The manifold of umbilic jets, $\mathcal{U}^4$, is defined by condition $k_{200}=k_{020}=k_{002}$ and $k_{110}=k_{101}=k_{011}=0$. It is a closed submanifold of codimension $5$ in $\mathcal{J}^4(3,4)$.

\begin{remark}
The expression in equation (\ref{funcaohJatos}) is a representative of the orbit of partially umbilic jets where the term $\m\frac{d}{2}u_1^2u_2$ 
vanishes, as can be obtained  by a rotation.
\end{remark}

Define below a canonic stratification of $\mathcal{J}^4(3,4)$. The term canonic means that the strata are invariant under the action of the group $\mathcal{A}^4_+=\mathcal{O}_+(4,4)\times\mathcal{L}^4_+(3,3)$.  It is to the orbits of this actions that reference is made in the following definition.

\begin{definition} \label{def:can-strat}
\begin{enumerate}
\item \textit{The Umbilic Jets}: $\mathcal{U}^4$ are those in the orbits of $\mathcal{J}^4(u_1,u_2,u_3,h)$, where $h=h(u_1,u_2,u_3)$, is as in equation (\ref{funcaohJatos}). It is a closed submanifold of codimension $5$ in $\mathcal{J}^4(3,4)$.
\item \textit{ The Partially Umbilic Jets}: $(\mathcal{PU})^4$ are those in the orbits of $\mathcal{J}^4(u_1,u_2,u_3,h)$, where $h=h(u_1,u_2,u_3)$, is as in equation (\ref{funcaohJatos}). It is a submanifold of codimension $2$ in $\mathcal{J}^4(3,4)$.
\item \textit{The Darbouxian Partially Umbilic Jets}: $\mathcal{D}^4=(\mathcal{D}_1)^4\cup(\mathcal{D}_2)^4\cup(\mathcal{D}_3)^4$ are  those in the orbits of $\mathcal{J}^4(u_1,u_2,u_3,h)$, where $h=h(u_1,u_2,u_3)$ is as in equation (\ref{funcaohJatos}), where
\begin{enumerate}
\item $(\mathcal{D}_1)^4$: $\left(\m\frac{c}{2b}\right)^2-\m\frac{a}{b}+2<0$
\item $(\mathcal{D}_2)^4$: $\left(\m\frac{c}{2b}\right)^2+2>\m\frac{a}{b}>1$, $a\neq2b$
\item $(\mathcal{D}_3)^4$: $\m\frac{a}{b}<1.$
\end{enumerate}
\item \textit{The Non-Darbouxian Partially Umbilic Jets}: $\mathcal{ND}^4$ are  those in the orbits of $\mathcal{J}^4(u_1,u_2,u_3,h)$, where $h=h(u_1,u_2,u_3)$ is as in equation (\ref{funcaohJatos}) whose expression satisfy $$b(b-a)=0\textrm{ or }$$ $$b(b-a)\neq0 \textrm{ and } a-2b=0\textrm{ or }c^2-ab(a-2b)=0.$$
This manifold can be further partitioned into
\begin{enumerate}
\item $(\mathcal{D}_{12})^4$: Defined by the orbits of jets with\\
$
\left\{\begin{array}{c}
a=2b\ne 0, c\ne 0, \textrm{ and }\chi_{12}\neq0, \textrm{ or}\\
c^2-4b(a-2b)=0\textrm{ and }\chi_{12}^{*}\neq0
\end{array}
\right.
$
\noindent See equation \eqref{chi2}.
 
\item $(\mathcal{D}_{23})^4$: Defined by the orbits of jets with $a=b\neq0$ and $\chi_{23}\neq0$. See definition \ref{defD23}.
\end{enumerate}
\end{enumerate}

\end{definition}

The canonic stratification of $\mathcal{J}^4(3,4)$ induces a canonic stratification of $\mathbb{J}^4(\M^3,{\mathbb R}^4)$ whose strata are principal sub-bundles with codimension equal to that of their fibers, which are the canonic strata of $\mathcal{J}^4(3,4)$, as defined above in items $1$, $2$, $3$ and $4$.

The collection of sub-bundles which stratify $\mathbb{J}^4(\M^3,{\mathbb R}^4)$ will be called Partially Umbilic Stratification. The strata are: $\mathbb{PU}^4(\M^3,{\mathbb R}^4)$ corresponding to $(\mathcal{PU})^4$;  $(\mathbb{D}_i)^4(\M^3,{\mathbb R}^4)$, $i=1,2,3$, corresponding to the strata of  Darbouxian partially umbilic
%SS s  CORRIGIDO ERRO APONTADO POR BSCM
jets $(\mathcal{D}_i)^4$, $i=1,2,3$, and so on, one bundle for each of the strata in definition \ref{def:can-strat}.

\begin{theorem}\label{th:summat}
Let $\M^3$ be a compact, oriented, smooth manifold.
In the space of immersions $Imm^r(\M^3, {\mathbb R}^4)$,  
endowed with the $C^4-$topology, 
the following properties define open and dense sets.
 
\begin{itemize}
\item[i)] The set ${\mathcal S}(\alpha)$  of partially umbilic points of $\alpha\in Imm^r(\M^3, {\mathbb R}^4)$  is
empty or it is a smooth submanifold of codimension 2, a curve,  of   $\M^3$ stratified as follows:
 \item[ii)]   The points $D_1$, $D_2$ e $D_3$ are distributed along a finite number of arcs along a connected component $\mathcal S$.
\item[iii)]   The points $D_{12}$ and  $D_{23}$ are isolated along a connected component of $\mathcal S$ and are the common border points of the arc in the previous item.
 
\end{itemize}
\end{theorem}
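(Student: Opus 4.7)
The plan is to apply the Thom Transversality Theorem to the $4$-jet extension $j^4\alpha:\M^3\to\mathbb{J}^4(\M^3,\R^4)$ relative to the canonic Partially Umbilic Stratification defined in Definition \ref{def:can-strat}. The strategy exploits the dimension count: $\dim\M^3=3$, while the strata relevant for the statement have codimensions $2$ (the regular partially umbilic stratum $\mathbb{PU}^4$), $3$ (the semi-Darbouxian strata $(\mathbb{D}_{12})^4$ and $(\mathbb{D}_{23})^4$), and $\geq 4$ or $5$ (for umbilic jets and for the ``bad'' non-Darbouxian non-$D_{ij}$ jets).

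First, I would verify that the strata listed in Definition \ref{def:can-strat} are indeed principal sub-bundles of $\mathbb{J}^4(\M^3,\R^4)$ whose fibers have the stated codimensions in $\mathcal{J}^4(3,4)$. Codimension $2$ for $(\mathcal{PU})^4$ and codimension $5$ for $\mathcal{U}^4$ follow from the classical result of Lax \cite{lax} about the stratification of $3\times 3$ symmetric matrices by eigenvalue coincidences. The Darbouxian loci $(\mathcal{D}_i)^4$ are open subsets of $(\mathcal{PU})^4$, hence also codimension $2$. For the semi-Darbouxian strata, using the adapted Monge chart of equation \eqref{eq:di}, $(\mathcal{D}_{12})^4$ is cut out within $(\mathcal{PU})^4$ by the single additional regular equation $a-2b=0$ (with $c\neq 0$, $b\neq 0$, and $\chi_{12}\neq 0$ all open) or by $c^2-4b(a-2b)=0$ (with $\chi_{12}^{*}\neq 0$); similarly $(\mathcal{D}_{23})^4$ is cut out by $a-b=0$ together with the open condition $\chi_{23}\neq 0$. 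These are therefore smooth submanifolds of codimension $3$. The remaining non-Darbouxian loci (failure of $T$, simultaneous vanishing of $\chi_{12}$ or $\chi_{23}$, the tangency condition $b=0$, double coincidence of roots, and the umbilic set) are readily checked to have codimension $\geq 4$ in $\mathcal{J}^4(3,4)$.

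Next, I would invoke the Thom Transversality Theorem for $C^r$-immersions (with $r\geq 4$) to conclude that the set $\mathcal{T}\subset Imm^r(\M^3,\R^4)$ of immersions whose $4$-jet extension $j^4\alpha$ is transversal to every stratum of the Partially Umbilic Stratification is residual, hence dense. For such $\alpha$: transversality to the codimension-$2$ stratum $\mathbb{PU}^4$ gives that $\mathcal{S}(\alpha)=(j^4\alpha)^{-1}(\mathbb{PU}^4)$ is either empty or a smooth $1$-dimensional submanifold, proving (i); transversality to the codimension-$3$ strata $(\mathbb{D}_{12})^4$ and $(\mathbb{D}_{23})^4$ forces these inverse images to be $0$-dimensional, i.e., discrete, proving the isolation part of (iii); and the codimension-$\geq 4$ bad strata are avoided altogether since $\dim\M^3=3$. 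By compactness of $\M^3$, the curve $\mathcal{S}(\alpha)$ is a finite union of closed smooth curves, and the discrete sets of $D_{12}$, $D_{23}$ points are finite; removing them from $\mathcal{S}(\alpha)$ leaves an open set in $\mathcal{S}(\alpha)$ on which the Darbouxian conditions $D_1$, $D_2$, $D_3$ hold. Since these conditions are open and mutually exclusive, connectedness yields the decomposition into finitely many arcs, each of constant Darbouxian type, with $D_{12}$ and $D_{23}$ as boundary separators, proving (ii) and the remainder of (iii).

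Openness of the set $\mathcal{T}$ in the $C^4$-topology follows because all the conditions involved---being transversal to each stratum, together with the explicit open inequalities ($T$, $\chi_{12}\neq 0$, $\chi_{23}\neq 0$, $b\neq 0$) that govern the type---are preserved under sufficiently small $C^4$-perturbations of $\alpha$, and because $\M^3$ is compact so the transversality conditions can be tested on a finite cover by Monge-adapted charts. The main technical obstacle is the codimension computation for the $D_{12}$ case when the discriminant condition fails in the alternative form $c^2-4b(a-2b)=0$: one must confirm that, modulo the action of the structure group $\mathcal{A}^4_+$ (in particular the rotation in the $(u_1,u_2)$ plane already used in Remark \ref{rem:adapted}), this second pattern is equivalent to $a=2b$, so that the two branches really define the same stratum and the codimension is indeed $3$ rather than higher. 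Once this identification is in place, the proof reduces to a direct application of transversality.
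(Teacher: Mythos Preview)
Your proposal is correct and follows essentially the same approach as the paper: density via the Thom Transversality Theorem applied to the canonic stratification of $\mathbb{J}^4(\M^3,\R^4)$, and openness via compactness together with the stability of the defining (in)equalities under $C^4$-perturbation. The only cosmetic difference is that the paper phrases the openness argument by appealing to the dynamical analysis already carried out in Theorems \ref{th:d123}, \ref{th:d12}, and \ref{th:d23} (local openness of normal hyperbolicity and of the transversal saddle-node), whereas you argue directly at the jet level via the open conditions $T$, $\chi_{12}\neq 0$, $\chi_{23}\neq 0$; since the $\chi$-conditions are precisely what guarantee those non-degenerate dynamical structures, the two formulations are equivalent.
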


\begin{proof} The density follows from Thom transversality theorem in the space of jets. See \cite{levine} and \cite{thom}. 
The openness in the  compact case  of  $\M^3$  is clear from the analysis in the proofs of the three main theorems of this paper.
 First treat  the isolated points  $D_{12}$ and  $D_{23}$  with statements and proofs  in sections  \ref{sec:d12_PU},  \ref{D12},  and  in sections \ref{sec:d23_PU} and \ref{sec:d23}, respectively.
 In fact the definitions of these points and their isolated   character depend  on the local openness of  
  transversality, in its simplest implicit function theorem form,  applied to mappings depending only up to fourth derivatives of the immersions, (\cite{thom}),   and on the local openness  of    hyperbolicity and normal hyperbolicity and transversal saddle - nodes   of equilibria of  vector fields  which, again,  depend on the fourth order derivatives of the immersion. See  \cite{fe}, \cite{hps} and  \cite{mp}.
   This analysis proofs  that each such point  $p_i$  has a neighboring closed arc  $I_i$,  where the conclusions of the theorems \ref{th:d12} and \ref{th:d23} hold for an open set of immersions. 
  Then apply the same idea to each of the finite  compact arcs  $J_{ij}$  of Darbouxian arcs,  which are complementary to the interior of   arcs $I_i$.
  
  The  openness  on the immersion invoked above hold also  along the arcs   $J_{ij}$. For this case the mappings  and vector fields involved in the arguments depend only
  on the third order derivatives of the immersion.  
  \end{proof}

\begin{remark}
In the case of non-compact theorem  \ref{th:summat}  has an obvious
 analogous replacing { \emph  open and dense} by {\em  residual} and the standard 
$C^r -$ topology by the   {\em ph Whitney topology}. See \cite{thom}.
\end{remark}

\section{Concluding Remarks} \label{sec:CR}

In this work it is proved that the singularities of the principal line fields of a generic  immersion
of a compact oriented three-dimensional manifold $M^3$ into $\mathbb {R}  ^4 $  
consist  of a regular curve $\mathcal S$ of {\it
partially umbilic points}, at which only two of the principal
curvatures coincide.

The curve $\mathcal S$ consists of arcs of
{\it transversal partially umbilic points,}
at which
it crosses
the partially umbilic planes.

These arcs have as common extremes a discrete set of
semi-Darbouxian $D_{23}$ points, at which
the contact of $\mathcal S$ with the umbilic plane is quadratic.
These points are the common extremes, or points of transition,
between  partially
umbilic Darbouxian  $D_2$ and  $D_3$ arcs.

The transversal arcs contain a discrete set of semi-Darbouxian
 $D_{12}$ points. 
 These
points are the common extremes, or points of transition,  between
the  partially umbilic  Darbouxian  $D_1$ and  $D_2$ arcs.

The
transversal structure   of principal curvature lines, along the
Darbouxian arcs $D_1,\; D_2$  and $D_3$  is   reminiscent of the
Darbouxian umbilics  in surfaces of $\mathbb {R}^3$ as described by Darboux \cite{da}
and Gutierrez - Sotomayor \cite{gs1}. At the transition points
$D_{12}$ and $D_{23}$, it 
 closely resembles  the bifurcations of
umbilic points appearing
in generic one parameter families of surfaces $\mathbb {R}^3$ studied  in the works of
Gutierrez, Garcia and Sotomayor \cite{gs4}.  

It must be emphasized, however, that  the three dimensional analysis carried out in this 
paper is  unavoidable.  In fact, only on the highly non-generic 
case  of 
Frobenius integrability of 
 the  plane distribution  spanned by the 
principal line fields $\mathcal L_1$  and $\mathcal L_2$  around the 
partially umbilic curve $\mathcal S_{12}$  the structure of  the 
principal foliations $\mathcal F_1$ and $\mathcal F_2$ can be considered 
as those appearing in one-parameter families of surfaces. 
See remark \ref{rem:nonfrob}.
The same  holds for the partially umbilic curve  ${\mathcal S}_{12}$ and   the structure of  the 
principal foliations ${\mathcal F}_2$ and ${\mathcal F}_3$.

The partition of the partially umbilic curve into the arcs  $D_1,\; D_2$ and $ D_3$ and
 the transition points $D_{23}$ and  $ D_{12} $,  together with the  stratified
structure of the partially umbilic separatrix surfaces,  consisting
of  all the principal lines  approaching $\mathcal S$, established in this
work, constitute a  natural extension to hypersurfaces in $\re^4$
of the results  of Darboux  for umbilic points for surfaces in  $\re^3$, \cite{da},
as reformulated  and extended by  Gutierrez and Sotomayor in   \cite{gs1} and Gutierrez, Garcia and Sotomayor in  \cite{gs4}. 

The stratified structure of the invariant  separatrix  manifolds foliated 
by   curvature lines approaching the partially umbilic curves,
 established in 
 Theorems \ref{th:d123}, \ref{th:d12} and \ref{th:d23},
  being 
 an \emph{invariant}  of each  
  one dimensional foliation  with singularities $\mathcal F _i$, $(i=1,2,3)$,
 under \emph{principal equivalence},
 is an important ingredient  added 
 to the original study
 of 
  Garcia in \cite{ga1} and \cite{ga3}.  
 
  It is a crucial  structural step in order to 
  formulate   an  improved     genericity theorem 
stating that an immersion of  $M^3$ into $\mathbb R^4$ has each
one of its three principal foliations having all its separatrix strata intersecting pairwise transversally.  
 By a {\em principal equivalence }  is  understood  a  homeomorphism of $M^3$ which preserves the principal foliations with singularities  $\mathcal F_i$, $(i=1,2,3)$,  one at the time.     
 However, the non-integrability result established in  remark  \ref{rem:nonfrob}
   makes  unfeasible  the simultaneous  {\em   principal structural stability } considerations for pairs of
  principal foliations as was done in the case of surfaces. 
  See \cite{gs1}  and \cite{gs6}. 
  Nevertheless, on the individual basis,  additional elaboration of  
  the normal hyperbolicity methods 
    used  in this work 
  leads to the  local principal structural stability  
   of each of the  foliations 
   at the partial umbilic  curves consisting of
   $D_1$,  $D_2$, $D_3 $, $D_{12}$ and $D_{23}$ points. 
%S %R Retirar referencia 20 This  will appear in the forthcoming paper \cite{gss-gengen}.

\section{Appendix: Coordinate expressions for geometric functions appearing in this work} \label{sec:App}

Consider 
a  chart  $(u_1,u_2,u_3)$ and an isometry $R$  as in section \ref{sec:Monge_jet} so that 
 the immersion $\alpha$  composed with  $R$  has the Monge form: 

 $(u_1,u_2,u_3) \to (u_1, u_2, u_3, h \uu )$,  where $h$ is given by equation \eqref{eq:di}.
 In this section will be obtained the  coordinate   expressions  of functions that are essential 
 for the calculations carried out in this work.  

Direct calculation 
with 
\eqref{eq:di} 
 gives the expressions below:
{\small
\begin{align*}
\frac{\partial h}{\partial u_1}&=k u_1 +\frac12au_1^{2}+\frac12 bu_2^{2}+\frac12q_{102}u_3^{2}+q_{201}u_1u_3+q_{111}u_2u_3+\frac16Q_{103}u_3^{3}\\
&+\frac12 Bu_1^{2}u_2+\frac16 Du_2^{3}+\frac12 Q_{202}u_1u_3^{2}+\frac12 Q_{301}u_1^{2}u_3+\frac12Q_{112}u_3^{2}u_2\\
&+Q_{211}u_3u_2u_1+\frac12Q_{121}u_3u_2^{2}+\frac16Au_1^{3}+\frac12Cu_1u_2^{2}+O(4),
\end{align*}

\begin{align*}
\frac{\partial h}{\partial u_2}&=k u_2 +\frac12cu_2^{2}+q_{021}u_2u_3+bu_1u_2+q_{111}u_1u_3+\frac12Du_1u_2^{2}+\frac12q_{012}u_3^{2} \\
&+\frac16Bu_1^{3}+\frac12Q_{112}u_3^{2}u_1+\frac16Q_{211}u_3u_1^{2}+Q_{121}u_3u_2u_1+\frac12Q_{022}u_2u_3^{2}\\
&+\frac16Eu_2^{3}+\frac16Q_{013}u_3^{3}+\frac12Q_{031}u_3u_2^{2}+\frac12Cu_1^{2}u_2+O(4),
\end{align*}

\begin{align*}
\frac{\partial h}{\partial u_3}&=k_3 u_3+\frac12q_{021}u_2^{2}+\frac12q_{201}u_1^{2}+q_{102}u_1u_3+q_{012}u_3u_2+q_{111}u_1u_2+\frac12q_{003}u_3^{2}\\
&+\frac16Q_{301}u_1^{3}+Q_{112}u_3u_1u_2+\frac12Q_{013}u_3^{2}u_2+\frac16Q_{031}u_2^{3}+\frac12Q_{202}u_1^{2}u_3\\ &+\frac12Q_{211}u_2u_1^{2}+\frac12Q_{121}u_2^{2}u_1+\frac12Q_{022}u_2^{2}u_3+\frac16Q_{004}u_3^{3}+\frac12Q_{103}u_3^{2}u_1\\
&+O(4),
\end{align*}
}

\subsection{First Fundamental Form} \label{ssec:AfirstFF}

The coefficients of the first fundamental form $(g_{ij})$ of $\alpha$ in the  Monge chart  $(u_1,u_2,u_3)$ 
are:

\begin{align*}
g_{11}&=\left<\alpha_{u_1},\alpha_{u_1}\right>=1+{k}^{2}u_1^{2}+kau_1^{3}+bku_2^{2}u_1+kq_{102}u_3^{2}u_1+2kq_{111}u_2u_3u_1\\
&+2kq_{201}u_1^{2}u_3+h.o.t.,
\end{align*}
\begin{align*}
g_{12}&=\left<\alpha_{u_1},\alpha_{u_2}\right>={k}^{2}u_1u_2+kq_{111}u_3u_1^{2}+\frac {kc}{2}u_1u_2^{2}+ \left(kq_{021}+q_{201}k \right) u_3u_2u_1\\
&+ k\left(b+\frac a2 \right) u_1^{2}u_2+\frac{kq_{012}}{2}u_3^{2}u_1+\frac{bk}{2}u_2^{3}+kq_{111}u_3u_2^{2}+q_{102}u_3^{2}ku_2
+h.o.t.,
\end{align*}
{\small
\begin{align*}
g_{13}&=\left<\alpha_{u_1},\alpha_{u_3}\right>=k_3u_1u_3+\frac{kq_{201}}{2}u_1^{3}+kq_{111}u_1^{2}u_2+ \left(kq_{102
}+\frac {ak_3}{2} \right) u_3u_1^{2}\\
&+\frac{kq_{021}}{2}u_2^{2}u_1+kq_{012}u_3u_2u_1+ \left(\frac {kq_{003}}{2}+q_{201} \right)u_3^{2}u_1+\frac {bk_3}{2}u_3u_2^{2} +q_{111}k_3u_3^{2}u_2\\
&+\frac{q_{102}k_3}{2}u_3^{3}+h.o.t.,
\end{align*}
}
\begin{align*}
g_{22}&=\left<\alpha_{u_2},\alpha_{u_2}\right>=1+{k}^{2}u_2^{2}+kcu_2^{3}+2kq_{021}u_2^{2}u_3+kq_{012}u_3^{2}u_2+2kbu_1u_2^{2}\\
&+2kq_{111}u_2u_3u_1+h.o.t.,
\end{align*}

\begin{align*}
g_{23}&=\left<\alpha_{u_2},\alpha_{u_3}\right>=kk_3u_2u_3+\frac12kq_{201}u_2u_1^{2}+kq_{111}u_2^{2}u_1+ \left(kq_{102}+bk_3 \right)u_3u_2u_1\\
&+q_{111}k_3u_3^{2}u_1+\frac12kq_{021}u_2^{3}+\left(kq_{012}+\frac{ck_3}{2} \right) u_3u_2^{2}+\left(\frac12kq_{003}+q_{021}k_3\right)u_3^{2}u_2\\
&+\frac{k_3q_{012}}{2}u_3^{3}
+h.o.t.,
\end{align*}
\begin{align*}
g_{33}&=\left<\alpha_{u_3},\alpha_{u_3}\right>=1+k_3^2u_3^{2}+q_{021}k_3u_2^{2}u_3+q_{003}k_3u_3^{3}+2q_{102}k_3u_1u_3^{2}+q_{201}k_3u_1^{2}u_3\\
&+2k_3q_{012}u_3^{2}u_2+2k_3q_{111}u_1u_2u_3+h.o.t.,
\end{align*}

\subsection{Normal Vector} \label{ssec:Anormal}

Taylor  expansion of the components  of the  unit normal of $\alpha$,
$
N={\mathcal N}_\alpha=\left(\alpha_{u_1} \wedge \alpha_{u_2}  \wedge  \alpha_{u_3} \right)/{ | \alpha_{u_1} \wedge \alpha_{u_2}  \wedge  \alpha_{u_3} |},
$
 gives  the following expressions for $N=(n_1,n_2,n_3,n_4)$ in a neighborhood of $(0,0,0)$:

{\small
\begin{align*}
n_1&=-u_1k-\frac12q_{102}u_3^{2}-\frac12au_1^{2}-q_{111}u_2u_3 -\frac12bu_2^{2}-q_{201}u_1u_3-\frac12Q_{112}u_3^{2}u_2\\
&-Q_{211}u_1u_2u_3-\frac16Q_{103}u_3^{3}-\frac12Q_{121}u_3u_2^{2}-\frac16Du_2^{3}+\left(-\frac12C+\frac12k^3\right)u_1u_2^{2}\\
&+\left(-\frac12Q_{202}+\frac12kk_3^2\right)u_1u_3^{2}-\frac12Q_{301}u_1^{2}u_3+\left(-\frac16A+\frac12k^3\right)u_1^{3}-B{u_1}^{2}u_2\\
&+h.o.t,
\end{align*}
\begin{align*}
n_2&=-u_2k-\frac12q_{012}u_3^{2}-bu_1u_2-\frac12cu_2^{2}-q_{111}u_1u_3-q_{021}u_2u_3-Q_{121}u_3u_2u_1\\
&-\frac16Q_{013}u_3^{3}-Q_{031}u_3u_2^{2}-\frac12Q_{112}u_3^{2}u_1-\frac12Q_{211}u_3u_1^{2}-\frac12Du_1u_2^{2}\\
&+\left(-\frac12C+\frac12k^3\right)u_1^{2}u_2+\left(-\frac12Q_{022}+\frac12kk_3^2\right)u_2u_3^{2}+\left(-\frac16E+\frac12k^3\right)u_2^{3}\\
&-\frac16Bu_1^{3}+h.o.t.,
\end{align*}
\begin{align*}
n_3&=-k_3u_3-\frac12q_{201}u_1^{2}-\frac12q_{003}u_3^{2}-q_{102}u_1u_3-q_{111}u_1u_2-\frac12q_{021}u_2^{2}-q_{012}u_3u_2\\
&-\frac12Q_{211}u_2u_1^{2}-\frac12Q_{103}u_3^{2}u_1+\left(-Q_{202}+\frac12k^2\right)u_1^{2}u_3-\frac12Q_{013}u_3^{2}u_2\\
&-\frac12Q_{121}u_2^{2}u_1+\left(-\frac12Q_{022}+\frac12k^2\right)u_2^{2}u_3-\frac16Q_{301}u_1^{3}-Q_{112}u_3u_1u_2\\
&-\frac16Q_{031}u_2^{3}+\left(-\frac16Q_{004}+\frac12\right)u_3^{3}+h.o.t.,
\end{align*}
\begin{align*}
n_4&=1-\frac12u_2^{2}{k}^{2}-\frac12u_1^{2}{k}^{2}-\frac{k_3^2}{2}u_3^{2}-\frac12kau_1^{3}+\left(-\frac12k-1 \right)q_{012} u_3^{2}u_2\\
&+ \left(-\frac12k-1 \right) q_{102}u_1u_3^{2}-\frac12q_{003}u_3^{3}-\frac32bku_2^{2}u_1+ \left(-2k-1\right)q_{111} u_1u_3u_2\\
&-\frac12kcu_2^{3}+ \left(-k-\frac12 \right) q_{201}u_3u_1^{2}+ \left(-k-\frac12 \right)q_{021} u_3u_2^{2}+h.o.t..
\end{align*}
}

\subsection{Second  Fundamental Form} \label{ssec:AsecondFF}

The coefficients of the second fundamental form $(\lambda_{ij})$
are:
{\small

\begin{align*}
\lambda_{11}&=\left<\frac{\partial^2\alpha}{\partial u_1^2},N\right>=k+au_1+q_{201}u_3+ \left(-\frac12{k}^{3}+\frac12A\right) u_1^{2}+Bu_1u_2+Q_{301}u_1u_3\\
&+ \left(-\frac12{k}^{3}+\frac12C \right) u_2^{2}+Q_{211}u_3u_2+ \left(-\frac{kk_3^2}{2}+\frac12Q_{202} \right)u_3^{2}+h.o.t.,
\end{align*}

\begin{align*}
\lambda_{12}&=\left<\frac{\partial^2\alpha}{\partial u_1\partial u_2},N\right>=q_{111}u_3+bu_2+Q_{121}u_3u_2+\frac12Q_{112}u_3^{2}+Q_{211}u_3u_1+\frac12Du_2^{2}\\ &+Cu_1u_2+\frac12Bu_1^{2}+h.o.t.,
\end{align*}

\begin{align*}
\lambda_{13}&=\left<\frac{\partial^2\alpha}{\partial u_1\partial u_3},N\right>=q_{201}u_1+q_{111}u_2+q_{102}u_3+\frac12Q_{103}u_3^{2}+Q_{112}u_3u_2 +Q_{211}u_2u_1\\ &+Q_{202}u_1u_3+\frac12Q_{301}u_1^{2}+\frac12Q_{121}u_2^{2}+h.o.t.,
\end{align*}
\begin{align*}
\lambda_{22}&=\left<\frac{\partial^2\alpha}{\partial u_2^2},N\right>=k+cu_2+q_{021}u_3+bu_1+ \left(-\frac12{k}^{3}+\frac12C\right){u_1}^{2}+Q_{121}u_3u_1\\
&+Du_1u_2+\left(-\frac12{k}^{3}+\frac12E\right)u_2^{2}+Q_{031}u_3u_2+\left(-\frac{k_3^2k}{2}+\frac12Q_{022} \right) {u_3}^{2}+O(3),
\end{align*}
\begin{align*}
\lambda_{23}&=\left<\frac{\partial^2\alpha}{\partial u_2\partial u_3},N\right>=q_{012}u_3+q_{111}u_1+q_{021}u_2+\frac12Q_{013}u_3^{2}+Q_{112}u_3u_1+Q_{121}u_2u_1\\ &+\frac12Q_{031}u_2^{2}+Q_{022}u_2u_3+\frac12Q_{211}u_1^{2}+O(3),
\end{align*}
\begin{align*}
\lambda_{33}&=\left<\frac{\partial^2\alpha}{\partial u_3^2},N\right>=k_3+q_{012}u_2+q_{102}u_1+q_{003}u_3+\left(-\frac1{k_3{k}^{2}}{2}+\frac12Q_{202}\right)u_1^{2}\\
&+Q_{112}u_1u_2+Q_{013}u_3u_2+\left(-\frac{k_3{k}^{2}}{2}+\frac12Q_{022} \right) u_2^{2}+ \left(\frac12Q_{004}-\frac{k_3^3}{2}\right) u_3^{2}+O(3),
\end{align*}
}

\subsection{The principal curvature $k_3$ } \label{ssec:Ak3}

The principal curvature  $k_3$, which is smooth near the origin, is given by
 
{\small
\begin{align*}
k_3(u_1,u_2,u_3)&= k_3  + q_{102}u_1+q_{012}u_2+q_{003}u_3\\
&-\frac{k_3k^2(k-k_3)-kQ_{202}+2q_{201}^2+k_3Q_{202}+2q_{111}^2}{k-k_3}u_1^2\\
&+\frac {kQ_{112}-2q_{201}q_{111}-k_3Q_{112}-2q_{111}q_{021}}{k-k_3}u_1u_2\\
&-\frac {k_3{k}^{2}(k-k_3)-kQ_{022}+2{q_{111}}^{2}+2{q_{021
}}^{2}+k_3Q_{022}}{k-k_3}u_2^2\\
&+\frac {kQ_{103}-2q_{012}q_{111}-2q_{201}q_{102
}-Q_{103}k_3}{k-k_3}u_1u_3\\
&+\frac {kQ_{013}-2q_{102}q_{111}-2q_{021}q_{012
}-k_3Q_{013}}{k-k_3}u_2u_3\\
&+\frac {kQ_{004}-3k_3^3k+3k_3^4-2{q_{102}}^{2}-k_3Q_{004}-2{q_{012}}^{2}}{k-k_3}u_3^2+h.o.t.
\end{align*}

}

\subsection{The principal direction $e_3$ } \label{ssec:Ae3}

The smooth principal direction $e_3(q)=(du_1,du_2,du_3)$ is defined by
 
\begin{align*}
\frac{du_1}{du_3}&=\frac{U_1(u_1,u_2,u_3)}{W_1(u_1,u_2,u_3)}, \;\;\;\;
\frac{du_2}{du_3} =\frac{V_1(u_1,u_2,u_3)}{W_1(u_1,u_2,u_3)}.
\end{align*}

The functions $ U_1, V_1$ and $ W_1$ are given by solving the linear system (\ref{eq:1}),  taking $i=3$.

Using the equation \eqref{eq:UVW1} and the subsections \ref{ssec:AfirstFF}, \ref{ssec:AsecondFF} and \ref{ssec:Ak3} it is obtained:

{\small
\begin{equation}\label{funcaoU1}
\begin{array}{ll}
{U}_1&\!\!\!\!=
(k_{3}-k ) q_{201}u_1+ (k_{3}-k )
q_{111}u_2+ (k_3-k ) q_{102}u_3\\
&+ (\frac 12{Q_{301}} (k_3-k ) -q_{201} (b+q_{102} )  ) u_1^{2}\\ %SSS corrigi OVERFULL 
&+ (q_{021}b-cq_{111}+\frac 12 ({k_3} -k ) Q_{121}+q_{111}q_{012} )u_2^{2}\\
&+ (-q_{201}c+q_{201}q_{012}+q_{111}q_{102}+Q_{211} (k_3 -k)  ) u_1u_2\\
&+ [ q_{111}^{2}+ (k_3^{2}k-Q_{202})(k-k_3 )+q_{201}(q_{003}-q_{021})+(q_{102}-b)q_{102}] u_1u_3\\
&+ (q_{012}b-q_{102}c+ (-k+k_3 )Q_{112}+q_{003}q_{111}+q_{012}q_{102} )u_2u_3\\
&+ (q_{111}q_{012}-q_{102}q_{021}+ \frac 12({k_3}-k ) Q_{103}+q_{003}q_{102} )u_3^{2}+h.o.t.
\end{array}
\end{equation}
}

{\small
\begin{equation}\label{funcaoV1}
\begin{array}{ll}
{V}_1&\!\!\!\!=
(k_3-k ) q_{111}u_1+ (k_3-k )q_{021}u_2+ (k_3-k ) q_{012}u_3\\
&+(q_{111} (q_{102}-a ) -\frac 12 Q_{211} (k-k_3)) u_1^{2} + (q_{111}b+q_{012}q_{021}+  \frac 12({k_3}-k )Q_{031} ) {u_2}^{2}\\
&+ [  q_{111}q_{012}+q_{201}b+(k_3-k)Q_{121}+q_{102}q_{021}-aq_{021} ] u_1u_2\\
&+ (q_{003}q_{111}+q_{012} (q_{102}-a ) + (k_3-k )Q_{112} ) u_1u_3\\
&+ [ q_{102}b+q_{111}^{2}-q_{201}q_{021}+q_{021}q_{003}+q_{012}^{2}+(k_3^{2}k-Q_{022}) (k-k_3 )] u_2u_3\\
&+ [\frac 12 (k_3-k)Q_{013}-q_{201}q_{012}+q_{111}q_{102}+q_{003}q_{012}] u_3^{2}+h.o.t.
\end{array}
\end{equation}
}
{\small
\begin{equation}\label{funcaoW1}
\begin{array}{ll}
{W}_1&\!\!\!\!= (k-k_3)^2+(k-k_3)(a+b-2q_{102})u_1+(k-k_3)(c-2q_{012})u_2\\
&+(k_3-k)[2q_{003}-q_{021}-q_{201}]u_3+h.o.t.\\
\end{array}
\end{equation}
}

\subsection{The field of planes $\mathcal{P}_3$ } \label{ssec:P3}

The plane field ${\mathcal P}_3$ defined by $\omega=0$ (see equation \ref{eqcampoplanos})
can be written as $du_3={\mathcal U} du_1+{\mathcal V}du_2$ where

$$\mathcal{U}=-\frac{[g_{11}U_1+ g_{12}{V_1}+g_{13} {W_1}]}{[g_{13}{U_1}+  g_{23}{V_1}+g_{33}{W_1}]}\;\;\; \text{and}\;\;\; \; \mathcal{V}=-\frac{[g_{12}{U_1} +g_{22}{V_1}+
g_{23}{W_1}]}{[g_{13}{U_1}+  g_{23}{V_1}+g_{33}{W_1}]} $$

 The Taylor expansions
  of  $\mathcal U$ and $\mathcal V$ 
  in a neighborhood of zero, 
  are
  given by:
{\small
\begin{equation}\label{funcaoU}
\begin{array}{ll}
\mathcal{U}\!\!\!\!&=\!\m\frac{1}{2(k-k_3)^2}[2q_{201}(k-k_3)u_1+2q_{111}(k-k_3)u_2+2q_{102}(k-k_3)u_3\\
&\\
&+(-2q_{021}b+2q_{111}q_{012}+kQ_{121}-k_3Q_{121})u_2^{2}+\left(2q_{003}q_{102}-2q_{201}q_{102}\right.\\
&\\
&\left.+(k-k_3)Q_{103}-2q_{111}q_{012}\right) u_3^{2}+(2kk_3^{3}-2q_{201}^{2}-2k_3Q_{202}-2k_3^2{k}^{2}\\
&\\
&+2kQ_{202}+2q_{102}^{2}-2q_{111}^{2}-2aq_{102}+2q_{201}q_{003})u_3u_1\\
&\\
&+\left(-k_3Q_{301}+2q_{201}q_{102}-2q_{201}a+Q_{301}k \right) u_1^{2}+(-2k_3Q_{112}+2q_{012}q_{102}\\
&\\
&-2q_{111}q_{021}+2q_{003}q_{111}-2q_{012}b-2q_{201}q_{111}+2kQ_{112})u_3u_2+(-2aq_{111}\\
&\\
&+2q_{102}q_{111}+2q_{201}q_{012}-2q_{111}b-2k_3Q_{211}+2Q_{211}k)u_1u_2]+h.o.t.
\end{array}
\end{equation}
}

{\small
\begin{equation}\label{funcaoV}
\begin{array}{ll}
\mathcal{V}\!\!\!\!&=\!\m\frac{1}{2(k-k_3)^2}[2q_{021}(k-k_3)u_2+2q_{111}(k-k_3)u_1+2q_{012}(k-k_3)u_3\\
&\\
&+(kQ_{031}-2q_{111}b-2q_{021}c-k_3Q_{031}+2q_{012}q_{021}) u_2^{2}+(-2k_3^2{k}^{2}\\
&\\
&-2q_{111}^{2}+2k_3^3k-2q_{012}+2kQ_{022}-2k_3Q_{022}-2q_{021}^{2}-2q_{102}b+2q_{012}^{2}\\
&\\
&+2q_{021}q_{003}) u_3u_2+(-2q_{111}q_{201}+2q_{003}q_{111}+2kQ_{112}-2q_{111}q_{021}\\
&\\
&+2q_{012}q_{102}-2q_{012}b-2k_3Q_{112}) u_3u_1+ (-k_3Q_{211}-2q_{111}b+2q_{102}q_{111}\\
&\\
&+Q_{211}k) u_1^{2}+(kQ_{013}+2q_{003}q_{012}-k_3Q_{013}-2q_{012}q_{021}-2q_{102}q_{111})u_3^{2} \\
&\\
&(-2q_{111}c-2q_{201}b+2kQ_{121}+2q_{102}q_{021}-2q_{021}b-2k_3Q_{121}\\
&\\
&+2q_{111}q_{012}) u_1u_2]+h.o.t.
\end{array}
\end{equation}
}
\subsection{The first fundamental form restricted to the plane field  $\mathcal{P}_3$ } \label{ssec:firstP3}

The first fundamental form $I=\sum g_{ij}du_idu_j$ restricted to the plane field  $\mathcal{P}_3$ is given by:

\begin{equation*}
I_r(du_1,du_2)=I_{\alpha}\Big|_{du_3=\mathcal{U}du_1+\mathcal{V}du_2}=E_rdu_1^2+2F_rdu_1du_2+G_rdu_2^2,\\
\end{equation*}
{\small
\begin{equation}\label{Er}
\begin{array}{ll}
E_{r}&=1+\left({k}^{2}+\m\frac{ {q_{201}}^{2}}{(k-k_3)^2}\right){u_1}^{2}+2\m\frac{q_{111}q_{201}}{(k-k_3)^2}u_1u_2
+2\m\frac {q_{102}q_{201}}{(k-k_3)^2}u_1u_3\\
&\\
&+\m\frac{{q_{111}}^{2}}{(k-k_3)^2}{u_2}^{2}+2\m\frac {q_{102}q_{111}}{(k-k_3)^2}u_2u_3
+\m\frac {{q_{102}}^{2}}{(k-k_3)^2}{u_3}^{2}+O(3);
\end{array}
\end{equation}

\begin{equation}\label{Fr}
\begin{array}{ll}
F_{r}&= \m\frac{q_{102}q_{111}+q_{201}q_{012}}{(k-k_3)^2}u_3u_1+\m\frac{q_{102}q_{012}}{(k-k_3)^2}{u_3}^{2}+\m\frac{q_{111}q_{021}}{(k-k_3)^2}{u_2}^{2}\\
&\\
&+\left(\m\frac{{q_{111}}^{2}}{(k-k_3)^2}+{k}^{2}+\m\frac{q_{201}q_{021}}{(k-k_3)^2}\right)u_1u_2
+\m\frac{q_{102}q_{021}+q_{111}q_{012}}{(k-k_3)^2}u_2u_3\\
&\\
&+\m\frac{q_{201}q_{111}}{(k-k_3)^2}{u_1}^{2}+O(3);
\end{array}
\end{equation}

\begin{equation}\label{Gr}
\begin{array}{ll}
G_{r}&=1+\m\frac{{q_{012}}^{2}}{(k-k_3)^2}{u_3}^{2}+\left({k}^{2}+\m\frac{{q_{021}}^{2}}{(k-k_3)^2}\right){u_2}^{2}+\m\frac{{q_{111}}^{2}}{(k-k_3)^2}{u_1}^{2}\\
&\\
&+\m\frac{2q_{021}q_{012}}{(k-k_3)^2}u_2u_3+\m\frac{2q_{111}q_{021}}{(k-k_3)^2}u_1u_2+\m\frac{2q_{012}q_{111}}{(k-k_3)^2}u_1u_3+O(3);
\end{array}
\end{equation}
}
\subsection{The second fundamental form
restricted to the plane field $\mathcal{P}_3$ } \label{ssec:secondP3}

The second fundamental form 
%S
$II=\sum\lambda_{ij}du_idu_j$ restricted to the plane field  $\mathcal{P}_3$   is given by:
\begin{equation*}
II_r(du_1,du_2)=II_{\alpha}\Big|_{du_3=\mathcal{U}du_1+\mathcal{V}du_2}=e_rdu_1^2+2f_rdu_1du_2+g_rdu_2^2,
\end{equation*}
 
{\small
\begin{equation}\label{er}
\begin{array}{ll}
e_{r}&=k+au_1+q_{201}u_3+ \left(\m\frac{A}{2}+\m\frac{\left(2k-k_3 \right){q_{201}}^{2}}{(k-k_3)^2}-\m\frac{{k}^{3}}{2} \right) {u_1}^{2}\\
&\\
&+ \left(\m\frac { \left(2k-k_3 \right) {q_{111}}^{2}}{ \left(k-k_3 \right) ^{2}}+\m\frac C2-\m\frac{k ^{3}}{2}\right){u_2}^{2}+ \left(\m\frac{Q_{202}}{2}+\m\frac{\left(2k-k_3\right){q_{102}}^{2}}{ \left(k-k_3 \right)^{2}}-\m\frac{kk_3^2}{2} \right){u_3}^{2}\\
&\\
&+ \left(B+{\m\frac { \left(4k-2k_3 \right) q_{201}q_{111}}{ \left(k-k_3 \right) ^{2}}} \right) u_1u_2+ \left({\m\frac { \left(4k-2k_3 \right) q_{102}q_{201}}{ \left(k-k_3 \right) ^{2}}}+Q_{301} \right) u_1u_3\\
&\\
&+ \left(Q_{211}+{\m\frac{ \left(4k-2k_3 \right) q_{102}q_{111}}{\left(k-k_3 \right) ^{2}}} \right) u_2u_3+O(3);
\end{array}
\end{equation}
}

\begin{equation}\label{fr}
\begin{array}{ll}
f_{r}&=bu_2+ q_{111}u_3+\left(\m\frac{B}{2}+{\m\frac { \left(2k-k_3 \right)q_{201}q_{111}}{ \left(k-k_3 \right) ^{2}}} \right)u_1^{2}\\
&\\
&+ \left({\m\frac { \left(2k-k_3 \right)q_{021}q_{111}}{ \left(k-k_3 \right) ^{2}}}+\frac{D}{2}\right)u_2^{2}+ \left({\m\frac { \left(2k-k_3 \right) q_{012}q_{102}}{ \left(k-k_3 \right) ^{2}}}+\frac{Q_{112}}{2} \right) u_3^{2}\\
&\\
&+ \left(C+{\m\frac { \left(2k-k_3 \right)q_{021}q_{201}}{ \left(k-k_3 \right) ^{2}}}+{\m\frac { \left(2k-k_3 \right) q_{111}^{2}}{ \left(k-k_3 \right) ^{2}}} \right) u_1u_2\\
&\\
&+ \left(Q_{211}+{\m\frac { \left(2k-k_3 \right) q_{012}q_{201}}{ \left(k-k_3 \right) ^{2}}}+{\m\frac { \left(2k-k_3\right) q_{102}q_{111}}{ \left(k-k_3 \right) ^{2}}} \right) u_1u_3\\
&\\
&+ \left(2Q_{121}+{\m\frac { \left(2k-k_3 \right) q_{012}q_{111}}{ \left(k-k_3 \right) ^{2}}}+{\m\frac { \left(2k-k_3 \right) q_{021}q_{102}}{ \left(k-k_3 \right) ^{2}}} \right)u_2u_3+O(3);
\end{array}
\end{equation}

{\small
\begin{equation}\label{gr}
\begin{array}{ll}
g_{r}&=k+bu_1+cu_2+q_{021}u_3 + \left(-\m\frac{{k}^{3}}{2}+\m\frac{C}{2}+{\m\frac { \left(2k-k_3 \right) {q_{111}}^{2}}{ \left(k-k_3 \right) ^{2}}} \right) u_1^{2}\\
&\\
&+ \left(-\m\frac{{k}^{3}}{2}+\m\frac{E}{2}+{\m\frac{ \left(2k-k_3 \right) q_{021}^{2}}{ \left(k-k_3 \right) ^{2}}} \right) u_2^{2}+ \left(\m\frac{Q_{022}}{2}-\m\frac{kk_{3}^2}{2}+{\m\frac { \left(2k-k_3 \right) q_{012}^{2}}{ \left(k-k_3 \right) ^{2}}} \right) u_3^{2}\\
&\\
&+ \left(D+{\m\frac { \left(-2k_3+4k \right) q_{021}q_{111}}{ \left(k-k_3 \right) ^{2}}} \right) u_1u_2+ \left(Q_{121}+{\m\frac { \left(-2k_3+4k \right)q_{012}q_{111}}{ \left(k-k_3 \right) ^{2}}} \right) u_1u_3\\
&\\
&+ \left(Q_{031}+{\m\frac { \left(-2k_3+4k \right)q_{012}q_{021}}{ \left(k-k_3 \right) ^{2}}} \right)u_2u_3+O(3)

\end{array}
\end{equation}
}

\subsection{Coefficient $\chi_{12}^{*}$ in a Monge chart }\label{chi1s}

The two patterns for the  failure  of the  discriminant condition  $D_2$,   
$ \frac{a}{b}=\left(\frac{c}{2b}\right)^2+2$  and $ a=2b$,  keeping the transversality condition $b\neq a$,   are permuted by a rotation  in the $(u,v)$-plane preserving the form of  equation (\ref{eq:di}).  

When $ \frac{a}{b}=\left(\frac{c}{2b}\right)^2+2$, $b(b-a)\ne 0$, 
the characterization of the partially umbilic point  $D_{12}$
  is as appearing in terms of $\chi_{12}$  %SS is   E ", " ACRESCENTADOS  NO INICIO NA LINHA SEGUINTE COMO FEITO EM BSM
  in remark \ref{chi12}, 
and   in terms of saddle node equilibrium in equation  
\ref{camporvc}.

When   $ \frac{a}{b}=\left(\frac{c}{2b}\right)^2+2$, an   analysis similar to that carried out in section 
\ref{D12}  gives equations  with  a coefficient  proportional to 
 $\chi_{12}^{*}$  given below, instead of $\chi_{12}$.

\begin{equation}\label{chi11}
\chi_{12}^{*}=\chi_{11}+\chi_{22}
\end{equation}
\noindent where,
{\small
$$\aligned \frac{\chi_{11}}{k-k_3}=&
 16\,{b}^{3}c  \left(-b{  q_{201}}+b{  q_{021}
}-{  q_{111}}\,c \right) { A}\\
-&4\,{b}^{2} \left(5\,{c}^{3}{  q_{111}
}-8\,{b}^{3}{  q_{201}}+8\,{b}^{3}{  q_{021}}-4\,{b}^{2}c{  q_{111}}-4\,b
{c}^{2}{  q_{021}}+4\,b{c}^{2}{  q_{201}} \right)  { B}\\
 +& 4\,b   \left(8\,{b}^{2}-2\,{c}^{2} \right)  \left({b}^{2}+{c}^{2} \right) 
{  q_{111}}-bc \left(8\,{b}^{2}-{c}^{2} \right) q_{021}+bc \left(8
\,{b}^{2}-{c}^{2} \right)  q_{201}
  { C}\\
+&c \left(-{c}^{4}{  q_{111}}+8\,{b}^{3}c{  q_{201}}+
32\,{b}^{4}{  q_{111}}-8\,{b}^{3}c{  q_{021}}+12\,{b}^{2}{c}^{2}{ 
 q_{111}} \right)   { D}\\
+&2\,b{c}^{2}{  q_{111}
}\, \left(4\,{b}^{2}+{c}^{2} \right)   { 
E}-bc \left(4\,{b}^{2}+{c}^{2} \right)  \left(8\,{b}^{2}-{c}^{2}
 \right)    { Q_{121}}\\
 -&4\,{b}^{2} \left(4\,{b
}^{2}+{c}^{2} \right)  \left(2\,{b}^{2}-{c}^{2} \right)   { Q_{211}}
-2\,{b}^{2}{c}^{2} \left(4\,{b}^{2}+{c}^{2}
 \right)  { Q_{031}}+4\,{b}^{3}c \left(4\,{
b}^{2}+{c}^{2} \right)    { Q_{301}}
\endaligned$$
}

{\small
$$\aligned \chi_{22}=&
4\,{b}^{2}c{k}^{3} \left(4\,{b}^{2}+{c}^{2} \right)  \left(k-{ k_3
} \right) {  q_{201}}-4\,{b}^{2}c{k}^{3} \left(4\,{b}^{2}+{c}^{2}
 \right)  \left(k-{ k_3} \right) {  q_{021}}\\
 -& 2\,b{k}^{3} \left(4b^2-c^2\right)  \left(4\,{b}^{2}+{c}^{2} \right) 
 \left(k-{ k_3} \right) {  q_{111}}-4\,{b}^{2} \left(4\,{b}^{2}+{c}
^{2} \right)  \left(2\,{b}^{2}-{c}^{2} \right) {  q_{201}}\,{  q_{012}}\\
-&6\,{b}^{2}{c}^{2} \left(4\,{b}^{2}+{c}^{2} \right) {  q_{012}}\,{ 
 q_{021}}-bc \left(4\,{b}^{2}+{c}^{2} \right)  \left(8\,{b}^{2}-{c}^{2}
 \right) {  q_{021}}\,{  q_{102}}\\
 -&8\,{b}^{2} \left(4\,{b}^{2}+{c}^{2}
 \right)  \left(2\,{b}^{2}-{c}^{2} \right) {  q_{111}}\,{  q_{102}}
 +12
\,{b}^{3}c \left(4\,{b}^{2}+{c}^{2} \right) {  q_{102}}\,{  q_{201}}\\
-&2
\,bc \left(4\,{b}^{2}+{c}^{2} \right)  \left(8\,{b}^{2}-{c}^{2}
 \right) {  q_{111}}\,{  q_{012}}
 - 48\,{b}^{4}c{{  q_{201}}}^{3}\\
 +&4\,{b}^{2
}c \left(-17\,{c}^{2}+28\,{b}^{2} \right) {{  q_{111}}}^{2}{  q_{201}}
+
c \left(44\,{b}^{2}{c}^{2}+32\,{b}^{4}-3\,{c}^{4} \right) {{  q_{111}}
}^{2}{  q_{021}}\\
+&16\,b \left(2\,b-c \right)  \left(2\,b+c \right) 
 \left({b}^{2}+{c}^{2} \right) {{  q_{111}}}^{3} 
 - 8\,b \left(8\,{b}^{4
}-12\,{b}^{2}{c}^{2}+{c}^{4} \right) {  q_{021}}\,{  q_{201}}\,{  q_{111}
}\\
+&96\,{b}^{3} \left(b-c \right)  \left(b+c \right) {{  q_{201}}}^{2}{
  q_{111}}+6\,b{c}^{4}{{  q_{021}}}^{2}{  q_{111}}
-4\,{b}^{2}c \left(8\,
{b}^{2}-{c}^{2} \right) {  q_{201}}\,{{  q_{021}}}^{2}\\
+&4\,{b}^{2}c
 \left(-{c}^{2}+20\,{b}^{2} \right) {{  q_{201}}}^{2}{  q_{021}}
 \endaligned
 $$
 }
 When $q_{111}=0$ and $q_{201}=q_{021}$ (necessary conditions for the integrability of the plane field ${\mathcal P}_3$)
 the coefficient $\chi_{12}^{*}$ is given by:
 
 $$\aligned\chi_{12}^{*}=& -4\,{b}^{2} \left(4\,{b}^{2}+{c}^{2} \right)  \left(2\,{b}^{2}-{c}^{
2} \right)  \left(k-{ k_3} \right) { Q_{211}}\\
-&bc \left(4\,{b}^{2}+
{c}^{2} \right)  \left(8\,{b}^{2}-{c}^{2} \right)  \left(k-{k_3}
 \right) Q_{121}\\
 +&4\,{b}^{3}c \left(4\,{b}^{2}+{c}^{2} \right) 
 \left(k- k_{3} \right)  Q_{301}-2\,{b}^{2}{c}^{2} \left(4\,{b}^
{2}+{c}^{2} \right)  \left(k-{ k_3} \right) Q_{031}\\
-&b q_{021}
\, \left(4\,{b}^{2}+{c}^{2} \right) ^{2} \left(2\, q_{012}\,b- q_{102}\,c \right) \endaligned$$

 Also the  coefficient $\chi_{12}$ is given by:
 $$\chi_{12}={b}^{2} \left(-{ k_3}\, Q_{211}+k Q_{211}+q_{012}\, q_{021
} \right) $$
 
\begin{remark} The long expressions appearing in this Appendix such as those in equations  
 \eqref{funcaoU}, \eqref{funcaoV}, and,        \eqref{chi11},  have been corroborated by Computer Algebra.
 
\end{remark}
\bibliographystyle{plain}

\begin{thebibliography}{999}


\bibitem {da} G. Darboux,  {\it Sur la forme des lignes de courbure dans
la voisinage d'un ombilic.} Le\c cons sur la Th\'eorie des Surfaces, IV,
Note 7, Gautthier Villars, Paris, (1896).

\bibitem{dla}  F. Dumortier, J. Llibre and J. C. Art\'es, {\it Qualitative Theory of Planar Differential Systems},
Universitext, Springer-Verlag, Berlin, (2006).

\bibitem {fe} N. Fenichel, {\it Persistence and Smootheness of Invariant
Manifolds of Flows.} Indiana University Math. J. {\bf 21} (1971-1972), pp. 193-226.
%S espaco . j. corrigido acima 

%\bibitem{Fo}  A. R. Forsyth, {
% Geometry of Four Dimensions, vols. I and II}
% Cambridge Univ. Press,
%(1930).


\bibitem{ga1} R. Garcia, {\it Lignes de
Courbure d'Hypersurfaces Immerg\'ees dans l'Espace ${\mathbb R}^4$}, Anais Acad.
Bras. Ci\^encias,  {\bf 64}, (1992).


%\bibitem{ga2} R. Garcia, {\it Hyperbolic Principal Cycles on
%Hypersurfaces of ${\mathbb R}^4$},  Annals of Global Analysis
%and Geometry, {\bf 11},  (1993), pp. 185-196.

\bibitem{ga3} R. Garcia, {\em Principal Curvature Lines near
Partially Umbilic Points in hypersurfaces  immersed in $\mathbb{R}^4$},
  Comp. and Appl. Math, {\bf 20}, (2001)
 pp. 121- 148.


\bibitem{gas} {R.~Garcia and J.~Sotomayor}, \newblock{\it Differential Equations of Classical Geometry,
a Qualitative Theory},  Publica\c c\~oes Matem\'{a}ticas,
 27$^{\text \small{o}}$ Col\'oquio Brasileiro de Matem\'atica, IMPA, (2009).
 
 \bibitem{gs4}  C. Gutierrez,  R.  Garcia  and J. Sotomayor,  %SS ORDEM AUTORES CORRIGIDA  E DESOLOCADA ALFABETICA... 
 \textit{Bifurcations of Umbilical Points and Related Principal Cycles},
Jour. Dyn. and Diff. Equations, \textbf{16} (2004), pp.  321-346.


\bibitem {gs1} C. Gutierrez, J. Sotomayor, {\it Structuraly Stable
Configurations of Lines of Principal Curvature.} Asterisque {\bf 98-98}
(1982), pp. 195-215.

%
%\bibitem {gs2} C. Gutierrez, J. Sotomayor,  {\it An Approximation
%Theorem for Immersions with Stable Configurations of Lines of Principal
%Curvature.} Springer Lectures Notes in Math. {\bf  1007} (1983), pp. 332-368.
%
%\bibitem{gs3} C. Gutierrez, J. Sotomayor, {\it Closed Principal Lines
%and Bifurcations.}  Bol. Soc. Bras. Mat. {\bf 17} (1986), pp. 1-19.


\bibitem{thom} {M. Golubitski and V. Guillemin}, \newblock{\it Stable mappings and their singularities.},
 Graduate Texts in Mathematics, Vol. 14. Springer-Verlag, (1973).


\bibitem {gs5} C. Gutierrez, J. Sotomayor, {\it Stability and
Bifurcations
of Configurations of Principal Lines}. Preprint {\bf IMPA } (1984).
Aport. Mat.,  Soc. Mat. Mex. {\bf 1 } (1985),  pp. 115-126.

\bibitem{gs6} C. Gutierrez, J. Sotomayor,  {\it Lines of
Curvature and Umbilical Points on Surfaces.} $18^{th}$ Brazilian
Math. Colloquium. IMPA, (1991).



\bibitem {hps} M. Hirsh, C. Pugh, M. Shub,  {\it Invariant
Manifolds.}   Lectures Notes in Math. {\bf 583} (1977).

\bibitem{levine} H. Levine,    {\em Singularities of differentiable mappings. } Edited by C. T. C. Wall, Lect. Notes in Math. {\bf 192},   {\it Springer Verlag}, (1971).

\bibitem{lax} P. Lax, {\em On the discriminant of real symmetric matrices.} Comm. Pure Appl. Math. {\bf 51}   (1998), pp. 1387-1396.

 \bibitem{desoga} D. Lopes, J. Sotomayor and R. Garcia, {\em Umbilic Singularities and  Lines of Curvature on  Ellipsoids of  $\mathbb R^4$}. Bull. of Braz. Math. Society, {\bf 45}  (2014), pp. 453-483.

 
%
%\bibitem{gss-gengen} D. Lopes, J. Sotomayor and R. Garcia, {\em Generic Properties of Principal configurations of hypersurfaces  immersed into  $\mathbb R^4$}. Preprint  (2013).

\bibitem{mp} W. Melo, J. Palis, {\it Introdu\c c\~ao aos Sistemas
Din\^amicos,} Projeto Euclides, IMPA, 1978.


\bibitem {ma} J. Mather,  {\it Stratifications and Mappings.}
 Symp. Salvador in Dynamical Systems, edited by M. Peixoto,
Dynamical Systems, Acad. Press (1973), pp. 195-232.

%
% \bibitem{mo} G.~Monge, \newblock{\it Sur les lignes de courbure de
% l'ellips\'oide,} Journal de l'\'Ecole Polytechnique II$^{\grave eme}$ cahier,
%cours de Flor\'eal an III (around 1795) p. 145.
% 

\bibitem{sp1} M. Spivak, {\it A Comprehensive introduction to
Differential Geometry, vol. 3,} Publish or Perish, Berkeley, 1979.

\bibitem{sp2} M. Spivak,  {\it A Comprehensive introduction to
Differential Geometry, vol. 4,} Publish or Perish, Berkeley, 1979.

\bibitem{st} D. Struik, {\it Lectures on classical differential
geometry.} Addison Wesley, 1950. Reprinted by Dover, New York, 1988.
 


\end{thebibliography}

\vskip 1cm

\author{\noindent D\'ebora Lopes\\ Departamento de Matem\'atica\\ Universidade Federal do Sergipe\\
Av. Marechal Rondon, s/n Jardim Rosa Elze - CEP 49100-000\\
S\~ao Crist\'ov\~ao, SE, Brazil}
 \email{deb@deboralopes.mat.br}

\vskip 0.7cm
\author{\noindent Jorge Sotomayor\\ Instituto de Matem\'atica e Estat\'{\i}stica \\
Universidade  de S\~ao Paulo,\\
 Rua do Mat\~ao  1010,
Cidade Univerit\'aria, CEP 05508-090,\\
S\~ao Paulo, S. P, Brazil}
 \email{sotp@ime.usp.br}
 
 \vskip 0.7cm

\author{\noindent Ronaldo Garcia\\Instituto de Matem\'atica e Estat\'{\i}stica \\
Universidade Federal de Goi\'as,\\ CEP 74001--970, Caixa Postal 131 \\
Goi\^ania, Goi\'as, Brazil}
 \email{ragarcia@ufg.br}

\end{document}